\let\zeroslash\emptyset
\def\paragraph{\subsection}
\def\@sect#1#2#3#4#5#6[#7]#8{%
  \edef\@toclevel{\ifnum#2=\@m 0\else\number#2\fi}%
  \ifnum #2>\c@secnumdepth \let\@secnumber\@empty
  \else \@xp\let\@xp\@secnumber\csname the#1\endcsname\fi
  \@tempskipa #5\relax
  \ifnum #2>\c@secnumdepth
    \let\@svsec\@empty
  \else
    \refstepcounter{#1}%
    \edef\@secnumpunct{%
      \ifdim\@tempskipa>\z@ 
        \@ifnotempty{#8}{.\@nx\enspace}%
      \else
        \@ifempty{#8}{.}{.\@nx\enspace}%
      \fi
    }%
      \ifnum #2=\tw@ \def\@secnumfont{\bfseries}\fi{}%
    \protected@edef\@svsec{%
      \ifnum#2<\@m
        \@ifundefined{#1name}{}{%
          \ignorespaces\csname #1name\endcsname\space
        }%
      \fi
      \@seccntformat{#1}%
    }%
  \fi
  \ifdim \@tempskipa>\z@ 
    \begingroup #6\relax
    \@hangfrom{\hskip #3\relax\@svsec}{\interlinepenalty\@M #8\par}%
    \endgroup
    \ifnum#2>\@m \else \@tocwrite{#1}{#8}\fi
  \else
  \def\@svsechd{#6\hskip #3\@svsec
    \@ifnotempty{#8}{\ignorespaces#8\unskip
       \@addpunct.}%
    \ifnum#2>\@m \else \@tocwrite{#1}{#8}\fi
  }%
  \fi
  \global\@nobreaktrue
  \@xsect{#5}}
\newcommand{\hatzero}{\hat{0}}
\newcommand{\TNall}{\textnormal{ all }}
\newcommand{\TNand}{\textnormal{ and }}
\newcommand{\TNfor}{\textnormal{ for }}
\newcommand{\TNimplies}{\textnormal{ implies }}
\newtheorem{theorem}{Theorem}[section]
\newtheorem{definition}[theorem]{Definition}
\newtheorem{remark}[theorem]{Remark}
\newtheorem{question}[theorem]{Question}
\begin{document}

\title[Survey of lattice properties]{A survey of lattice properties:
  modular, Arguesian, linear, and distributive}
\author{Dale R. Worley}
\email{worley@alum.mit.edu}
\date{Apr 11, 2024} 

\begin{abstract}
This is a survey of characterizations and relationships between some
properties of lattices, particularly the modular, Arguesian, linear,
and distributive properties, but also some other related properties.
The survey emphasizes finite and finitary lattices and deemphasizes
complemented lattices.

A final section is a restatement of the open questions, which may
prove to be a source of thesis problems.
\end{abstract}

\maketitle


\begin{quotation}
When an author collects together the opinions of as many others as he can
and fills half of every page with footnotes, this is known as
``scholarship''. --- Egyptologist Flinders Petrie quoted
in \cite{Bell1961}
\end{quotation}

\vspace{2em}

\textit{Please send any suggestions for improvements,
particularly to the ``unknown'' items in section~\ref{sec:summary-chart},
to the author for
incorporation into a later version.}

\section{Introduction}

This is a survey of characterizations and relationships between some
properties of lattices, particularly the modular, Arguesian, linear,
and distributive properties, but also some other related properties.
We also enumerate how authors spell ``Arguesian'' in order to find the
consensus.

Because the present author's interest is in lattices that are
finitary\footnote{all principal ideals are finite}, and
which are not complemented, or even relatively complemented,
the survey emphasizes finite and finitary lattices and deemphasizes
complemented lattices.

This survey is organized into four sections.  The first
section \ref{sec:by-work} is organized by the works surveyed, quoting
the relevant results.  The second section \ref{sec:by-property} is
organized by the properties involved, synthesizing the results quoted
in the first part.  The third section \ref{sec:summary-chart} is a
summary chart of the lattice properties, the relationships between
them and their major characteristics.  The fourth
section \ref{sec:summary-qs} is a restatement of the open questions
from the second section, which may prove to be a source of thesis problems.

\section{Survey by works cited} \label{sec:by-work}

Words quoted with ``...'' are taken directly from the cited work.
Words that are not explicitly quoted are the present author's
paraphrase.  Quotations that continue into later paragraphs start each
later paragraph with ``.
All translations are the present author's.  Quoted words
are sometimes edited for consistency of terminology, brevity, or
especially, to make citations match the present bibliography.
Within quotations, words
bracketed [...] are the present author's paraphrase.  Deleted passages
are represented with ... .
For the exact wordings, please see the cited works themselves.

\paragraph{Birkhoff, \textnormal{``On the structure of abstract algebras''}}
\cite{Birk1935}

\cite{Birk1935}*{sec.~21 Th.~22} ``Theorem 22: Every subgroup lattice
[a sublattice of subgroups of a group] is isomorphic with an
equivalence lattice [a sublattice of equivalence relations on a set],
and conversely.''

\cite{Birk1935}*{sec.~31}
``The preceding material suggests several interesting questions whose
answer is unknown.

``Some questions concern equivalence lattices. Is any lattice
realizable as a lattice of equivalence relations? Is the dual of any
equivalence lattice an equivalence lattice? ...
More generally, are equivalence
lattices a family in the species of algebras of double composition [a
variety of algebras with two operations].''

\paragraph{Birkhoff, \emph{Lattice theory}} \cite{Birk1967}

Uses ``Desarguesian'' but only applies it to projective
planes.\cite{Birk1967}*{ch.~I sec.~13}

\cite{Birk1967}*{ch.~I sec.~6 Th.~9} ``Theorem 9.  In any lattice, the
following identities are equivalent:
\begin{equation*}
x \wedge (y \vee z) = (x \wedge y) \vee (x \wedge z)
\TNall x, y, z \tag{L6'}
\end{equation*}
\begin{equation*}
x \vee (y \wedge z) = (x \vee y) \wedge (x \vee z)
\TNall x, y, z \tag{L6''} \text{ ''}
\end{equation*}

\cite{Birk1967}*{ch.~I sec.~6} ``A ring of sets is a family $\Psi$ of
subsets of a set $I$ which
contains with any two sets $S$ and $T$ also their (set-theoretic)
intersection $S \cap T$ and union $S \cup T$.  ...  Any ring of sets
under the natural ordering $S \subset T$ is a distributive
lattice.''

\cite{Birk1967}*{ch.~I sec.~7} ``the self-dual ``modular'' identity:
If $x \leq z$, then
$x \vee (y \wedge z) = (x \vee y) \wedge
z$.''

\cite{Birk1967}*{ch.~I sec.~7} ``Thus any distributive lattice
satisfies [the modular identity].''

\cite{Birk1967}*{ch.~I sec.~7 Th.~11} ``Theorem 11.  The normal
subgroups of any group $G$ form a modular
lattice.''

\cite{Birk1967}*{ch.~I sec.~7 Th.~12} ``Theorem 12.  Any nonmodular
lattice $L$ contains [the pentagon lattice $N_5$] as a sublattice.''

\cite{Birk1967}*{ch.~II sec.~7 Lem.~1} ``Lemma 1.  In a modular
lattice, $x=y$ is implied by the conditions
$a \wedge x = a \wedge y$ and $a \vee x = a \vee y$, provided that $x$
and $y$ are comparable (i.e., that $x \geq y$ or $y \geq
x$).''

\cite{Birk1967}*{ch.~II sec.~7 Lem.~3}
``Lemma 3.  In any modular lattice, for all $x, y, z$,
$[x \wedge (y \vee z)] \vee [y \wedge (z \vee x)] = (x \vee y) \wedge
(y \vee z) \wedge (z \vee x)$.''

\cite{Birk1967}*{ch.~II sec.~7 Th.~13}
``Theorem 13.  Any modular, nondistributive lattice $M$ contains a
sublattice isomorphic with [the diamond
lattice].''

\cite{Birk1967}*{ch.~II sec.~7 Th.~13 Cor.}
``Corollary.  For a lattice to be distributive, the following
condition is necessary and sufficient:  $a \wedge x = a \wedge y$ and
$a \vee x = a \vee y$ imply $x=y$.  That is, it is necessary and
sufficient that relative complements be unique.''

\cite{Birk1967}*{ch.~II sec.~8 Th.~14~Cor.}
``Corollary.  Any modular or semimodular poset of finite length is
graded by its height function $h[x]$.''

\cite{Birk1967}*{ch.~II sec.~7 Exer.~2}
``Prove that, in any lattice, [the modular identity] is equivalent to
each of the identities $x \wedge (y \vee z) = x \wedge \{[y \wedge
(x \vee z)] \vee z\}$, $[(x \wedge z) \vee y] \wedge z = [(y \wedge
z) \vee x] \wedge z$.''

\cite{Birk1967}*{ch.~II sec.~8}
``We shall call $P$ (upper) semimodular when it satisfies:  If $a \neq
b$ both cover $c$, then there exists $d \in P$ which covers both $a$
and $b$.  Lower semimodular posets of finite lengths are defied
dually.''

\cite{Birk1967}*{ch.~II sec.~8 Th.~15}
``Theorem 15.  A graded lattice of finite length is semimodular if and
only if $h[x] + h[y] \geq h[x \vee y] + h[x \wedge y]$.
Dually, it is lower semimodular if and only if
$h[x] + h[y] \leq h[x \vee y] + h[x \wedge y]$.''
Thus a lattice of finite length is modular iff
it is graded and $h[x] + h[y] =
h[x \vee y] + h[x \wedge y]$.

\cite{Birk1967}*{ch.~II sec.~8 Th.~15 Cor.}
``Corollary.  In any modular lattice of finite length:
$h[x] + h[y] = h[x \vee y] + h[x \wedge
y]$.''

\cite{Birk1967}*{ch.~II sec.~8 Th.~16}
``Theorem 16.  Let $L$ be a lattice of finite length.  Then the
following conditions are equivalent:
\begin{enumerate}
\item[(i)] the modular identity L5,
\item[(ii)] $L$ is both upper and lower semimodular,
\item[(iii)] the Jordan--Dedekind chain condition [all maximal chains between
the same endpoints have the same finite length], and
[ $h[x] + h[y] = h[x \vee y] + h[x \wedge
y]$ ]. ''
\end{enumerate}

\cite{Birk1967}*{ch.~III sec.~3 Th.~3}
``Theorem 3.  Let $L$ be any distributive lattice of length $n$.  Then
the poset $X$ of join-irreducible elements $p_i > \hatzero$ has order
$n$ and $L = \boldsymbol{2}^X$ [the set of weakly order-preserving
functions from $X$ to $\boldsymbol{2}$].''

\paragraph{Britz, Mainetti, and Pezzoli,
\textnormal{``Some operations on the family of equivalence relations''}}
\cite{BritzMainPezz2001}

\cite{BritzMainPezz2001}*{Th.~1} ``Two equivalence relations $R$ and $T$ are
said to \emph{commute} if
and only if $RT = TR$.  Commuting equivalence relations can be
characterized by various means, among others by

``Theorem 1.  Let $R$ and $T$ be equivalence relations on a set $S$.
The following statements are then equivalent:
\begin{enumerate}
\item $R$ and $T$ commute;
\item $R \vee T = RT$;
\item $RT$ is an equivalence relation.''
\end{enumerate}

\cite{BritzMainPezz2001}*{Th.~2} ``Two equivalence relations $R$ and $T$ are
\emph{independent} if and only if $\rho \cap \tau$ is nonempty for all
classes $\rho \in S_R$ and $\tau \in S_T$.  The following
characterization in terms of the equivalence classes of $R$ and $T$ is
due to P. Dubreil and M.-L. Dubreil-Jacotin[3].

``Theorem 2.  The equivalence relations $R$ and $T$ commute if and
only if, for any equivalence class $C \in S_{R \vee T}$ 
the restrictions $R_{|C}$, $T_{|C}$ are independent.''

\paragraph{Crawley and Dilworth, \emph{Algebraic Theory of Lattices}}
\cite{CrawDil1973}

Uses ``Arguesian'' and ``has a representation of type 1''.

\cite{CrawDil1973}*{Chap.~12 12.4}
``12.4:  If a lattice $L$ has a representation of type 1, then $L$ is
Arguesian.\cite{Jons1953b}''

\paragraph{Day, \textnormal{``In search of a Pappian lattice identity''}}
\cite{Day1981}

\cite{Day1981}*{sec.~1}
``there is a lattice embedding of the non-pappian projective plane over
the quaternions into the 5-dimensional projective space over the complex
numbers.''

\paragraph{Day,
\textnormal{``Geometrical applications in modular lattices''}}
\cite{Day1982}

Uses ``Arguesian'' and ``Desarguean''.

\cite{Day1982}*{sec.~1}
``Theorem (Dedekind [27]). Let $(M; \vee, \wedge)$ be a lattice; then the
following are equivalent:
\begin{enumerate}
\item $(M; \vee, \wedge)$ is modular.
\item $(M; \vee, \wedge)$ satisfies $(\forall x, y, z \in M) (x \leq z \Rightarrow
x \vee (y \wedge z) = (x \vee  y) \wedge z)$.
\item $(M; \vee, \wedge)$ satisfies $(\forall x, y, z \in M)
((x \wedge z) \vee (y \wedge z) = ((x \wedge z) \vee y) \wedge
z))$.''
\end{enumerate}

Note there is a typo in the above reference; it should be ``[30]''.

\cite{Day1982}*{sec.~3 Def.~1}
``Definition 1. A lattice $(L; \vee, \wedge)$ is called Desarguean if
it satisfies the implication
$$ [(a_0 \vee b_0) \wedge (a_1 \vee b_1) \leq a_2 \vee b_2] \Rightarrow
[(a_0 \vee a_1) \wedge (b_0 \vee b_1) \leq
[(a_0 \vee a_2) \wedge (b_0 \vee b_2)] \vee
[(a_1 \vee a_2) \wedge (b_1 \vee b_2)] \text{. ''} $$

\cite{Day1982}*{sec.~3 Lem.~2}
``Lemma 2. Desarguean lattices are modular.''

\cite{Day1982}*{sec.~3 Th.~7}
``Theorem 7. A lattice is Desarguean if and only if it satisfies the
identity $\lambda \leq \rho$ where
\begin{align*}
\lambda & = (x_0 \vee x_1) \wedge (y_0^\prime \vee y_1) \\
\rho & = [(x_0 \vee x_2) \wedge (y_0 \vee y_2)] \vee
[(x_1 \vee x_2) \wedge (y_1 \vee y_2)] \vee [y_1 \wedge (x_0 \vee x_1)] \\
\intertext{and}
y_0^\prime & = y_0 \wedge [x_0 \vee ((x_1 \vee y_1) \wedge (x_2 \vee
y_2))] \text{. ''}
\end{align*}

\cite{Day1982}*{sec.~3 Th.~8}
``Let $a_0, a_1, a_2, b_0, b_1, b_2$ be six variables and define
lattice terms
$$ p = (a_0 \vee b_0) \wedge (a_1 \vee b_1) \wedge (a_2 \vee b_2) $$
$$ c_i = (a_j \vee a_k) \wedge (b_j \vee b_k)
\qquad \{i, j, k\} = \{0, 1, 2\} $$
and
$$ \bar{c} = c_2 \wedge (c_0 \vee c_1). $$

``Theorem 8. For a lattice $(L; \vee , \wedge)$, the following are equivalent.
\begin{enumerate}
\item $L$ is Desarguean
\item $L$ satisfies $p \leq a_0 \vee [b_0 \wedge (b_1 \vee \bar{c})]$
\item $L$ satisfies $p \leq [a_0 \wedge (a_1 \vee \bar{c})] \vee [b_0 \wedge (b_1 \vee \bar{c})]$
\item $L$ satisfies $p \leq a_0 \vee b_1 \vee \bar{c}$ ''
\end{enumerate}

\paragraph{Day and Pickering,
\textnormal{``A note on the Arguesian lattice identity''}} \cite{DayPick1984}

Uses ``Arguesian''.

\cite{DayPick1984} ``Let $(L; +, \cdot)$ be a
lattice. A \emph{triangle} in $L$ is an
element of $L^3$. For two triangles in $L$, $\boldsymbol{a} = (a_0,
a_1, a_2)$ and $\boldsymbol{b}=(b_0, b_1, b_2)$ we define auxiliary
polynomials $p_i=p_i(\boldsymbol{a},\boldsymbol{b})=(a_j+b_j)\cdot
(a_k+b_k)$, $p= p_ip_j\;(= p_ip_k=p_jp_k)$, and $c_i=c_i(\boldsymbol{a},
\boldsymbol{b}) = (a_j+ a_k) \cdot (b_j+b_k)$. Two triangles,
$\boldsymbol{a}$ and $\boldsymbol{b}$ in $L$, are called
\emph{centrally perspective} if $p_2(\boldsymbol{a},
\boldsymbol{b})\leq a_2+b_2$ and are called \emph{axially perspective}
if $c_2(\boldsymbol{a}, \boldsymbol{b})\leq c_0(\boldsymbol{a},
\boldsymbol{b})+ c_1(\boldsymbol{a}, \boldsymbol{b})$. We abbreviate
these concepts as $CP(\boldsymbol{a}, \boldsymbol{b})$ and
$AP(\boldsymbol{a}, \boldsymbol{b})$ respectively. \emph{Desargues’
implication} is the Horn sentence $CP(\boldsymbol{a}, \boldsymbol{b})
\Rightarrow AP(\boldsymbol{a}, \boldsymbol{b})$.

``Theorem. In the theory of lattices the following are equivalent.
\begin{enumerate}
\item Desargues’ Implication
\item $p(\boldsymbol{a}, \boldsymbol{b}) \leq a_0(a_1 + c_2(c_0+ c_1))
+ b_0(b_1+ c_2(c_0+ c_1))$
\item $p(\boldsymbol{a}, \boldsymbol{b}) \leq a_0+b_0(b_1 + c_2(c_0+c_1))$
\item $p(\boldsymbol{a},\boldsymbol{b}) \leq a_0+ b_1+ c_2(c_0+ c_1)$
\item $(a_0+ c_1) (b_0(a_0+p_0)+b_1) \leq c_0+c_1+b_1(a_0+ a_1)$.''
\end{enumerate}

\cite{DayPick1984} ``one might ask if [a self-dual] equation exists
for Arguesian lattices.''

\paragraph{Encyclopedia of Mathematics,
\textnormal{``Arguesian lattice''}} \cite{EnMathArg}

Uses ``Arguesian''.

\cite{EnMathArg} ``A lattice in which the Arguesian law is valid,
i.e. for all $a_i$, $b_i$,
$$ (a_0+b_0)(a_1+b_1)(a_2+b_2) \leq a_0(a_1+c)+b_0(b_1+c), $$
$c=c_0(c_1+c_2)$, $c_i=(a_j+a_k)(b_j+b_k)$ for any permutation $i$,
$j$, $k$ [a21].  Arguesian lattices form a variety [are characterized by a
set of identities], since within lattices $p \leq q$ is equivalent to
$pq=p$. A lattice is Arguesian if and only if it is a modular lattice
and $(a_0+b_0)(a_1+b_1) \leq a_2+b_2$ (central perspectivity) implies
$c_2 \leq c_0+c_1$ (axial perspectivity). In an Arguesian lattice and
for $a_i$, $b_i$ such that $a_2=(a_0+a_2)(a_1+a_2)$ and
$b_2=(b_0+b_2)(b_1+b_2)$, the converse implication is valid too [a24]. A
lattice is Arguesian if and only if its partial order dual is
Arguesian.''

\paragraph{Encyclopedia of Mathematics,
\textnormal{``Desargues assumption''}} \cite{EnMathDes}

\cite{EnMathDes} ``In lattice-theoretical terms the Desargues assumption
may be formulated as the identity ([1])
\begin{multline*}
[(x+z)(y+u)+(x+u)(y+z)](x+y) \\
\leq [(y+x)(z+u)+(y+u)(z+x)](y+z)+[(z+y)(x+u)+(z+u)(x+y)](z+x). \text{ ''}
\end{multline*}

Note that in light of para.~\ref{p:Haim1985b}\cite{Haim1985b}*{Note~2},
since this equation contains only four variables, it is likely
incorrect.

\paragraph{Encyclopedia of Mathematics,
\textnormal{``Modular lattice''}} \cite{EnMathMod}

\cite{EnMathMod} ``[A modular lattice is one] in which the modular law
is valid, i.e. if
$a \leq c$, then $(a+b)c=a+bc$ for any $b$.''

\cite{EnMathMod} ``[Modularity] amounts to saying that the identity
(ac+b)c=ac+bc is valid.''

\cite{EnMathMod} ``Examples of modular lattices include the lattices
of subspaces of a
linear space, of normal subgroups (but not all subgroups) of a group,
of ideals in a ring, etc.''

\cite{EnMathMod} ``A lattice with a composition sequence is a modular
lattice if and
only if there exists on it a dimension [rank] function $d$, i.e. an
integer-valued function such that $d(x+y)+d(xy)=d(x)+d(y)$ and such
that if the interval $[a,b]$ is prime [a covering], it follows that
$d(b)=d(a)+1$.''

\paragraph{Gr\"atzer, \emph{General lattice theory}} \cite{Gratz1996}

Uses ``Arguesian'' and ``representation of type 1''.

\cite{Gratz1996}*{Ch.~I sec.~4 Lem.~10}
``Lemma 10.  Consider the following two identities and inequality:
\begin{enumerate}
\item[(i)] $x \wedge (y \vee z) = (x \wedge y) \vee (x \wedge z)$,
\item[(ii)] $x \vee (y \wedge z) = (x \vee y) \wedge (x \vee z)$,
\item[(iii)] $(x \vee y) \wedge z \leq x \vee (y \wedge z)$.
\end{enumerate}
Then (i), (ii), and (iii) are equivalent in any lattice $L$.

``Remark.  A lattice satisfying identities (i) or (ii) is
called \emph{distributive}.''

\cite{Gratz1996}*{Ch.~I sec.~4 Lem.~12}
``Lemma 12.  The identity
$$ (x \wedge y) \vee (x \wedge z) = x \wedge (y \vee (x \wedge z)) $$
is equivalent to the condition:
$$ x \geq z \textnormal{ implies that } (x \wedge y) \vee z = x \wedge
(y \vee z). $$

``Remark.  A lattice satisfying either condition is
called \emph{modular}.''

\cite{Gratz1996}*{Ch.~IV sec.~1 Th.~1}
``Theorem 1.  For a lattice $L$, the following conditions are equivalent:
\begin{enumerate}
\item[(i)] $L$ is modular, that is,
$$ x \geq z \textnormal{ implies that } x \wedge (y \vee z) = (x \wedge y) \vee z. $$
\item[(ii)] $L$ satisfies the \emph{shearing identity}:
$$ x \wedge (y \vee z) = x \wedge ((y \wedge (x \vee z)) \vee z). $$
\item[(iii)] $L$ does not contain a pentagon.
\end{enumerate}

``...  Observe also the dual of the shearing identity:
$$ x \vee (y \wedge z) = x \vee ((y \vee (x \wedge z)) \wedge z) \text{. ''}. $$

\cite{Gratz1996}*{Ch.~IV sec.~1 Exer.~1}
``a lattice $L$ is modular iff it satisfies the identity
$$ (x \vee (y \wedge z)) \wedge (y \vee z) =
(x \wedge (y \vee z)) \vee (y \wedge z). $$

\cite{Gratz1996}*{Ch.~IV sec.~4 Def.~9}
``Definition 9.  Let $x_0$, $x_1$, $x_2$, $y_0$, $y_1$, $y_2$ be
variables.  We define some polynomials:
\begin{align*}
z_{ij} & = (x_i \vee x_j) \wedge (y_i \vee y_j), \qquad 0 \leq i < j < 3, \\
z & = z_{01} \wedge (z_{02} \vee z_{12}).
\end{align*}
The \emph{Arguesian identity} is
$$ (x_0 \vee y_0) \wedge (x_1 \vee y_1) \wedge (x_2 \vee y_2) \leq
((z \vee x_1) \wedge x_0) \vee ((z \vee y_1) \wedge y_0). $$

``A lattice satisfying this identity is called \emph{Arguesian}.''

\paragraph{Haiman,
\textnormal{``The theory of linear lattices''}}
\label{p:Haim1984a}\cite{Haim1984a}

Note most results of \cite{Haim1984a} are
in para.~\ref{p:Haim1985a}\cite{Haim1985a} and for many, we have not made
duplicate entries here.

Uses ``Arguesian'' and ``linear''.

\cite{Haim1984a}*{Intro.}
``the lattices of normal subgroups of a group, ideals of a ring, or
subspaces of a vector space are more than modular; as Birkhoff and
Dubreil-Jacotin were first to observe, they are lattices of
equivalence relations which commute relative to the operation of
composition of relations. The combinatorial properties of lattices of
commuting equivalence relations are not mere consequences of their
modularity, but rather the opposite; the consequences of the modular
law derived since Dedekind, who originally formulated it, have mainly
been guessed on the basis of
examples which were lattices of commuting equivalence relations.''

\cite{Haim1984a}*{Intro.}
``This thesis is a study of lattices of commuting
equivalence relations, which we have named \emph{linear} lattices,
a term suggested by Rota for its evocation of the archetypal
example of projective geometry.''

\cite{Haim1984a}*{Ch.~II}
``By general theorems of universal algebra (\cite{Gratz1979} Appendix 4,
Theorems 3, 4, for instance), the axiomatizability of the class of
linear lattices by universal equational Horn sentences, such as (1),
is equivalent to the closure of the class of linear lattices under
isomorphism, sublattices, and products. These closure properties can
also be seen directly. Namely, closure under isomorphism and
sublattices are built into the definition, and closure under products
is given by the sum of representations construction.''

\cite{Haim1984a}*{Ch.~II}
``From Theorem 1 it is clear that every implication (1) depends on
only finitely many of its hypotheses $P_i \leq Q_i$.  This fact implies
the further closure of the class of linear lattices under direct
limits. It does not seem possible, however, straightforwardly to
construct a representation of a direct limit of linear lattices from
representations of the individual lattices.''

\cite{Haim1984a}*{Ch.~II}
``It also does not seem possible, straightforwardly or otherwise, to
construct a linear representation of an arbitrary homomorphic image of
a linear lattice. To do so, of course, would show that the class of
linear lattices is a variety, answering the most important open
question concerning linear lattices.''

\paragraph{Haiman,
\textnormal{``Linear lattice proof theory: an overview''}} \cite{Haim1984b}

Uses ``Arguesian'' and ``linear''.

\cite{Haim1984b}*{sec.~1}
``Theorem 2, while not solving the free linear lattice word problem,
comes close and clearly isolates the difficulties. It also suggests a
natural conjecture [Conj.~2] which if true would simultaneously solve the free
linear lattice word problem and prove that the class of linear
lattices is not self-dual and (hence) not equal to the variety of
Arguesian lattices.''

\cite{Haim1984b}*{sec.~3}
``Conjecture 2. The number of deductions (A) required in a Normal Form
proof is zero.

``...

``As the next example shows, the truth of Conjecture 2 would imply that
the variety generated by linear lattices is not self-dual,
hence \cite{Jons1972} not equal to the variety of Arguesian lattices.

``Example 2. The lower series-parallel graph in Figure 4 is derived from
the upper one by using paired deductions (CD) to partition vertices $u$
and $v$ as indicated by the subscripted labels in the lower graph. The
corresponding lattice polynomials (which can be read from the graphs,
so there is no need to write them out in symbols) therefore are the
left- and right-hand sides of a valid linear lattice inequality. This
derivation is not planar, so it is not surprising to find (and
routine, though tedious, to verify) that the dual inequality,
displayed in Figure 5, has no proof using vertex partitioning and
deduction (E) alone. Both inequalities are short [in the sense defined
here], so if Conjecture 2 holds, the dual inequality cannot be valid
in all linear lattices.

``A linear lattice violating the inequality of Figure 5 would of course
be of interest, independent of Conjecture 2.''

\paragraph{Haiman
\textnormal{``Proof theory for linear lattices''}}
\label{p:Haim1985a}\cite{Haim1985a}

Most of these results are from para.~\ref{p:Haim1984a}\cite{Haim1984a}.

Uses ``Arguesian'' and ``linear''.

\cite{Haim1985a}*{sec.~0.0}
``the lattices of normal subgroups of a group, ideals of a ring,
or subspaces of a vector space are more than modular; as Birkhoff and
Dubreil-Jacotin were first to observe, they are lattices of equivalence
relations which commute relative to the operation of composition of
relations.''

\cite{Haim1985a}*{sec.~0.0}
``Let $p(S)$ denote the lattice of equivalence relations (or
partitions) on the set $S$. ...
If $\rho: L \rightarrow p(S)$ and $\rho^\prime: L^\prime
\rightarrow p(S^\prime)$ are linear representations with $S \cap
S^\prime = \zeroslash$, their \emph{sum} $\rho \oplus \rho^\prime: L
\times L^\prime \rightarrow p(S \cup S^\prime)$ defined by $\rho
\oplus \rho^\prime((x, x^\prime)) = \rho(x) \cup
\rho^\prime(x^\prime)$ is also a linear representation.  If $L=
L^\prime$ we also refer to $(\rho \oplus \rho^\prime) \circ \Delta: L
\rightarrow S \cup S^\prime$, where $\Delta(x) = (x, x)$ [the minimum,
discrete, or identity equivalence relation], as the
\emph{sum} of the representations $\rho$, $\rho^\prime$ of $L$.  Sums
of arbitrary finite or infinite collections of representations are
defined analogously.''

\cite{Haim1985a}*{sec.~1.0}
``Recall that the \emph{dual} of a lattice L is the lattice $L^d$ on
the same underlying set as $L$, but in which meet and join have been
interchanged, or what is the same, the partial order reversed. As is
well known, the varieties of distributive lattices, modular lattices,
and all lattices are closed under dualization. \cite{Jons1972} showed
that the variety of Arguesian lattices is also self-dual in this sense
(see (2.2))

``While it remains an open question whether the class of all linear
lattices is self-dual, there is an intrinsic duality to the list of
deductions (A)-(F).''

\cite{Haim1985a}*{Exam.~1.1}
``Example 1.1. The Arguesian implication \cite{Jons1954a}\cite{Jons1953b} is
$$ (a \vee a^\prime) \wedge (b \vee b^\prime) \leq c \vee c^\prime $$
implies
$$ (a \vee b) \wedge (a^\prime \vee b^\prime) \leq
[(a \vee c) \wedge (a^\prime \vee c^\prime)] \vee
[(c \vee b) \wedge (c^\prime \vee b^\prime)] \text{ \ldots} $$

``Reading the primed graphs
backward gives a proof of the dual implication
$$ c \wedge c^\prime \leq (a \wedge a^\prime) \vee (b \wedge b^\prime) $$
implies
$$ [(a \wedge c) \vee (a^\prime \wedge c^\prime)] \wedge
[(c \wedge b) \vee (c^\prime \wedge b^\prime)] \leq
(a \wedge b) \vee (a^\prime \wedge b^\prime) \text{. ''} $$
This proof of the
Arguesian implication shows that it is true for all linear
lattices.

\cite{Haim1985a}*{sec.~1.0}
``By general theorems of universal algebra ([15], Appendix 4,
Theorems 3, 4, for instance), the axiomatizability of the class of linear
lattices by universal equational Horn sentences, such as (1.1), is
equivalent to
the closure of the class of linear lattices under isomorphism, sublattices,
and products. These closure properties can also be seen directly. Namely,
closure under isomorphism and sublattices are built into the definition, and
closure under products is given by the sum of representations construction.

``From Theorem 1.1 it is clear that every implication (1.1) depends on
only finitely many of its hypotheses $P_i \leq Q_i$. This fact implies
the further
closure of the class of linear lattices under direct limits. It does not seem
possible, however, straightforwardly to construct a representation of a
direct limit of linear lattices from representations of the individual lattices.

``It also does not seem possible, straightforwardly or otherwise, to
construct a linear representation of an arbitrary homomorphic image of
a linear lattice. To do so, of course, would show that the class of
linear lattices is a variety, answering the most important open
question concerning linear lattices.''

\cite{Haim1985a}*{Exam.~2.2}
``Example 2.2. The Arguesian identity (\cite{Jons1953b} [7]---our
version is a simplified equivalent form \cite{Haim1985b}) is
$$ c \wedge ([(a \vee a^\prime) \wedge (b \vee b^\prime)] \vee c^\prime)
\leq a \vee ([((a \vee b) \wedge (a^\prime \vee b^\prime)) \vee
((b \vee c) \wedge (b^\prime \vee c^\prime))] \wedge (a^\prime \vee c^\prime)). $$
We now apply Proposition 2.1. Since every variable already occurs exactly
once on the left-hand side, the second subscript on the replacement
variables does not vary, and we suppress it. The equivalent form then reads
\begin{multline*}
c \wedge ([((a_1 \wedge a_2) \vee (a_1^\prime \wedge a_2^\prime))
\vee ((b_1 \wedge b_2) \vee (b_1^\prime \wedge b_2^\prime))]
\vee (c_1^\prime \wedge c_2^\prime)) \\
\leq a_1 \vee ([((a_2 \vee b_1) \wedge (a_2^\prime \vee b_1^\prime))
\vee ((b_2 \vee c) \wedge (b_2^\prime \vee c_1^\prime))] \wedge
(a_1 \vee c_2^\prime)). \tag{2.2}
\end{multline*}

``It is a remarkable fact that (2.2) becomes its own dual after
exchanging the symbols
$a_1 \leftrightarrow c$, $a_1^\prime \leftrightarrow c_1^\prime$,
$a_2 \leftrightarrow b_2$, and $a_2^\prime \leftrightarrow b_2^\prime$.
(2.2) is the first explicitly self-dual form of the Arguesian
identity to have been found.``

\cite{Haim1985a}*{Exam.~2.3}
``Example 2.3. One form of the modular law is the identity
$$ (b \vee c) \wedge a \leq b \vee (c \wedge (a \vee b)). $$
Proposition 2.1 converts this identity to
$$ ((b_1 \wedge b_2) \vee c) \wedge a \leq
b_1 \vee (c \wedge (a \vee b_2)) \text{. ''} $$

\cite{Haim1985a}*{Exam.~2.5, Figs.~6 and 7}
``Example 2.5. Generalizing Example 2.4 yields higher Arguesian
identities, the first of which is shown in Fig.~7. It has probably
occurred to the reader by now that a complicated lattice polynomial is
more easily recognized from a drawing of its series-parallel graph
then from an expression for the polynomial itself. Therefore we do not
actually write down a higher Arguesian identity, but let the graphs in
Fig.~7 stand for its left and right sides. The generalization beyond
Figs.~6 and 7 should be apparent. The proofs of the higher Arguesian
identities are all analogous to that of the usual Arguesian identity
(2.2); all are planar and self-dual.

``Each higher Arguesian identity is equivalent by Proposition 2.1 to
its special case in which $x_1 = x_2$ for each primed or unprimed
letter $x$ among the variables. Using this form and setting $a = b$,
$a^\prime = b^\prime$, one easily derives each identity from the next
higher one. It is not known whether each higher Arguesian identity is
strictly stronger than the ones below it. We suspect that this is
indeed the case.''

\cite{Haim1985a}*{sec.~3.0}``The four most fundamental open questions
about the class of linear lattices are (1) is it a variety, (2) is it
self-dual, (3) does it generate the variety of Arguesian lattices, and
(4) does it have a solvable free lattice word problem?  The
diagrammatic proof theory presented here has clear relevance for
attacks on the last three questions, as we have sought to make
clear.'''

\paragraph{Haiman,
\textnormal{``Two notes on the Arguesian identity''}}
\label{p:Haim1985b}\cite{Haim1985b}

Uses ``Arguesian''.

\cite{Haim1985b}*{eq.~1} This equation is equivalent to the Arguesian identity
$$ \text{`` } c \wedge ([(a \vee a^\prime) \wedge (b \vee b^\prime)] \vee  c^\prime)
\leq a \vee ([((a \vee b) \wedge (a^\prime \vee b^\prime)) \vee
((b \vee c) \wedge (b^\prime \vee c^\prime))] \wedge (a^\prime \vee c^\prime))
\text{ ''} $$

\cite{Haim1985b}*{eq.~2}  This equation is equivalent to the Arguesian identity
and is self-dual
\begin{multline*}
\text{`` } c \wedge ([((a_1 \wedge a_2) \vee (a_1^\prime \wedge a_2^\prime)) \wedge
((b_1 \wedge b_2) \vee (b_1^\prime \wedge b_2^\prime))] \vee
(c_1^\prime \wedge c_2^\prime)) \\
\leq a_1 \vee ([((c \vee b_2) \wedge (c_1^\prime \vee b_2^\prime)) \vee 
((b_1 \vee a_2) \wedge (b_1^\prime \vee a_2))] \wedge (a_1^\prime \vee c_2^\prime))
\text{ ''}
\end{multline*}

\cite{Haim1985b}*{Note~2} ``We construct a non-Arguesian lattice $L$ all
of whose [sublattices generated by five elements] are Arguesian; in
fact, are sublattices of a quaternionic projective geometry.''  Thus,
``any lattice identity equivalent to the Arguesian law must
necessarily involve at least six variables.''

\paragraph{Haiman,
\textnormal{``Arguesian lattices which are not linear''}} \cite{Haim1987}

Uses ``Arguesian'' and ``linear''.

\cite{Haim1987}*{sec.~1} ``Nevertheless, the question raised by J\'onsson,
whether every Arguesian lattice is linear, has remained open until
now.  Here we describe an infinite family ${A_n}$ $(n > 3)$ of
nonlinear lattices, Arguesian for $n > 7$ (and possibly for $n > 4$),
settling J\'onsson's question in the negative. Actually, we obtain
more: a specific infinite sequence of identities strictly between
Arguesian and linear, and a proof that the universal Horn theory of
linear lattices is not finitely based.''

\cite{Haim1987}*{sec.~3} ``Theorem. $A_n$ is not a linear lattice.

``Proof. In \cite{Haim1985a}, the author introduced ``higher Arguesian
identities''
$$ D_n\text{\@: }
a_0 \wedge \left(a_0^\prime \vee \bigwedge_{i=1}^{n-1} [a_i \vee a_i^\prime] \right)
\leq a_1 \vee \left( (a_0^\prime \vee a_1^\prime) \wedge
\bigvee_{i=1}^{n-1} [(a_i \vee a_{i+1}) \wedge (a_i^\prime \vee a_{i+1}^\prime)] \right)
$$
which hold in all linear lattices. $D_3$ is the Arguesian
law \cite{Haim1985b}. ...
$D_n$ fails in $A_n$ . In particular, $A_3$ is not Arguesian. This
minimally non-Arguesian lattice was discovered by Pickering [8].

``Theorem. Every proper sublattice of $A_n$ is linear.

``...

``Theorem. If $X \subset A_n$ generates $A_n$, then $|X| \geq n$.''

Note that in the above identity, indexes of $a$ and $a^\prime$ are to
be taken modulo $n$, that is, $a_0 = a_n$ and $a_0^\prime = a_n^\prime$.
Compare with equation~\ref{eq:higher}\cite{Hawr1996}*{eq.~15}.

\cite{Haim1987}*{sec.~4}
``The results of \S 3 imply that no finite set of identities,
or even universal Horn sentences, can completely characterize linearity; in
particular, the Arguesian law is insufficient, since it holds in $A_n$ for $n > 7$.
It is known, however, how to characterize linear lattices by an infinite set of
universal Horn sentences \cite{Haim1985a}\cite{Jons1959a}.''

\cite{Haim1987}{sec.~4}
``If, as appears likely, the identity $D_{n-1}$ holds in
$A_n$ $(n > 4)$, we would have that $D_{n-1}$ does not imply $D_n$,
showing that $\{D_n\}$ forms a hierarchy of progressively strictly
stronger linear lattice identities.''

\paragraph{Haiman,
\textnormal{``Arguesian lattices which are not type-1''}} \cite{Haim1991}

Uses ``Arguesian''.  Uses ``type-1 representability'' for lattices.
Uses ``linear representation'' for an isomorphism of a lattice with
a sublattice of the lattice of subspaces of a vector space.

\cite{Haim1991}*{sec.~1} ``It is known that there is an infinite
system of universal Horn sentences characterizing type-1
representability \cite{Haim1985a} \cite{Jons1959a}.''

\cite{Haim1991}*{sec.~1} ``Nation and Picketing \cite{NatPick1987}
conjectured that
having a distributive skeleton might be a sufficient condition for a
finite-dimensional Arguesian lattice to be type-1 representable.
[In section 4] we show how to modify the examples $A_n$, to give
counterexamples to this conjecture.''

\cite{Haim1991}*{sec.~3}
Uses the fact that if a lattice is a sublattice of the lattice of
subspaces of a vector space, then the lattice is Arguesian and linear.

\cite{Haim1991}*{sec.~3}
Constructs a series of lattices that are Arguesian but violate the
higher Arguesian identities which are satisfied by every linear
lattice.

\cite{Haim1991}*{sec.~3 Th.~1} ``Theorem 1.  Let $D_n$ be the lattice
identity
$$ x_0 \left( x_0^\prime + \prod_{i=1}^{n-1} [x_i + x_i^\prime] \right)
\leq x_1 + (x_0^\prime + x_1^\prime)
\sum_{i=1}^{n-1} (x_i + x_{i+1})(x_i^\prime + x_{i+1}^\prime). $$
Then $D_n$ is valid in every type-1 representable lattice but not in
$A_n$.

``...

``Note that for $n=3$, $D_n$ is equivalent to the Arguesian
law \cite{Haim1985b}, so $A_3(K)$ is a non-Arguesian lattice.  These
are in fact minimal non-Arguesian lattices which were first discovered
by Pickering [9].''

\cite{Haim1991}*{sec.~3 Th.~2} ``Theorem 2.  Every proper sublattice
of $A_n$ has a type-1 representation, and even a linear representation
over $K$.''

\cite{Haim1991}*{sec.~3~Cor.~5}
``Corollary 5. No finite set of lattice identities, or even universal Horn
sentences, i.e. sentences of the form
$\forall x_1 \cdots \forall x_k
[P_1(x) = Q_1(x)] \wedge \cdots \wedge [P_k(x) = Q_k(x)]
\Rightarrow P_0(x) = Q_0(x)$,
can characterize lattices with type-1
representation.''

\paragraph{Hawrylycz,
\textnormal{``Arguesian identities in invariant theory''}} \cite{Hawr1996}

Uses ``Arguesian'' and ``linear lattice''.

\cite{Hawr1996}*{sec.~4 eq.~14} ``It can be shown \cite{Haim1985a} that the
Arguesian law may be written
$$ c \wedge ([a \vee a^\prime) \wedge (b \vee b^\prime)] \vee c^\prime)
\leq a \vee ([((a \vee b) \wedge (a^\prime \vee b^\prime)) \vee ((b \vee c) \wedge (b^\prime \vee c^\prime))] \wedge (a^\prime \vee c^\prime))
\text{ ''} $$

\cite{Hawr1996}*{sec.~4 eqs.~15 and 16}\label{eq:higher} ``The
$N$th higher Arguesian law as given by Haiman \cite{Haim1985a} may be
written, given alphabets of letters $a_1, a_2 , \ldots, a_n$, and
$b_1, b_2, \ldots, b_n$ as
$$ a_n \wedge \left( \left[ \bigwedge_{i=1}^{n-1} (a_i \vee b_i) \right] \vee b_n \right)
\leq a_1 \vee \left( \left[ \bigvee_{i=1}^{n-1} ((a_i \vee a_{i+1}) \wedge (b_i \vee b_{i+1})) \right] \wedge (b_1 \vee b_n) \right) $$

``...

``By applying Proposition 4.12 the $N$th higher Arguesian law may be written
in the following self-dual form.

``$N$th Higher Order Arguesian Law. Let $a_1, \ldots, a_n$,
$a_1^\prime , \ldots, a_n^\prime$ and $b_1, \ldots, b_n, b_1^\prime, \ldots,
b_n^\prime$ be alphabets. Then the following identity holds as a linear
lattice identity:

$$ a_n^\prime \wedge \left( \left[ \bigwedge_{i=1}^{n-1} ((a_i \wedge a_1^\prime) \vee (b_i \wedge b_i^\prime)) \right] \vee (b_n \wedge b_n^\prime) \right)
\leq a_1 \vee \left( \left[ \bigvee_{i=1}^{n-1} ((a_1^\prime \vee a_{i+1}) \wedge (b_i^\prime \vee b_{i+1})) \right] \wedge (b_1 \vee b_n^\prime) \right)
\text{ ''} $$

\paragraph{Herrmann,
\textnormal{``A review of some of Bjarni J\'onsson's results on
representation of arguesian lattices''}} \cite{Herr2013}

Uses ``arguesian''.

\cite{Herr2013}*{sec.~3} ``J\'onsson considered this construction of a lattice
$L$ for $L_i =
L(V_{iD_i})$ and $\dim V_{iD_i} = 3$. In \cite{Jons1954a}*{Th.~3.6}, he
chose $\dim v = 1$ and $D_1$, $D_2$ of distinct prime characteristic
to obtain an example of a simple lattice of height 5 isomorphic to a
lattice of permuting equivalences but not embeddable into the normal
subgroup lattice of any group.''

\paragraph{J\'onsson,
\textnormal{``On the representation of lattices''}} \cite{Jons1953b}

Uses ``Desarguesian'' but only applies it to projective planes.
Uses ``representation of type 1 (or 2 or 3)''.

\cite{Jons1953b}*{Intro.}
``It is therefore natural to ask whether every modular lattice is
isomorphic to a lattice of normal subgroups of some group.  As will be
shown in Section 2 below, the answer to this question is negative.''

\cite{Jons1953b}*{sec.~1}
``Definition 1.1.  By a representation of a lattice $A$ we mean an
ordered pair $\langle F, U \rangle $ such that $U$ is a set and $F$ is a function
which maps $A$ isomorphically onto a sublattice of the lattice of all
equivalence relations over $U$. We say that $\langle F,U \rangle $ is
\begin{enumerate}
\item[(i)] of type 1 if $F(x)+F(y) = F(x);F(y)$ for $x,y \in A$,
\item[(ii)] of type 2 if $F(x)+F(y) = F(x);F(y);F(x)$ for $x,y \in A$,
\item[(iii)] of type 3 if $F(x)+F(y) = F(x);F(y);F(x);F(y)$ for
$x,y \in A$.
\end{enumerate}
[where ``;'' is simple composition of relations] 
The equation in (i) is equivalent to the condition that
$F(x);F(y) = F(y);F(x)$.''

\cite{Jons1953b}*{sec.~1}
``It is well known that the lattice of all normal subgroups of a group
$G$ has a representation of type 1; we let $U = G$, and for each
normal subgroup $H$ of $G$ let $F(H)$ be the set of all ordered pairs
$\langle u, v \rangle $ with $u v^{-1} \in H$.  It is not known whether the converse
of this result holds.''

\cite{Jons1953b}*{sec.~1 Th.~1.2}
``Theorem 1.2.  If a lattice $A$ has a representation of type 2, then
$A$ is modular.''
This implies that every lattice with a representation of type 1 is
modular.

\cite{Jons1953b}*{sec.~2 Lem.~2.1}
``Lemma 2.1.  Every modular lattice $A$ which has a representation of
type 1 satisfies the following condition:  If $a_0, a_1, a_2, b_0,
b_1, b_2 \in A$,  and if
\begin{align*}
x & = (a_0+b_0)\cdot(a_1+b_1)\cdot(a_2+b_2), \\
y & = (a_0+a_1)\cdot(b_0+b_1)\cdot
[(a_0+a_2)\cdot(b_0+b_2)+(a_1+a_2)\cdot(b_1+b_2)], \\
\intertext{then}
x & \leq a_0\cdot(a_1+y)+b_0\cdot(b_1+y) \text{. ''}
\end{align*}

Note that the above gives J\'onsson's definition of the Arguesian condition,
but he does not label it as such.

\cite{Jons1953b}*{sec.~2 Th.~2.2}
``Theorem 2.2.  If $A$ is the lattice of all subspaces of a projective
plane $P$, then the following conditions are equivalent:  (1) $A$ has
a representation of type 1, (ii) $A$ satisfies the condition [of
Lemma~2.1], (iii) $P$ is Desarguesian, (iv) $A$ is isomorphic to a
lattice of subgroups of some Abelian group $G$.''

\cite{Jons1953b}*{sec.~2 Th.~2.3}
``Theorem 2.3.  A free modular lattice with four or more generators
does not have a representation of type 1.''

\cite{Jons1953b}*{sec.~3 Th.~3.7}
``Theorem 3.7.  Every modular lattice has a representation of type 2.''

\cite{Jons1953b}*{sec.~4 Th.~4.3}
``Theorem 4.3.  Every lattice has a representation of type 3.''

\cite{Jons1953b}*{sec.~5}
``In connection with the above results it is natural to ask whether
the class of all lattices which are isomorphic to lattices of
commuting equivalence relations can be characterized by means of
identities.  ...  Similar questions can be raised concerning lattices
which are isomorphic to lattices of normal subgroups of arbitrary
groups or to lattices of subgroups of Abelian groups.  In particular,
it would be interesting to know whether these three classes of
lattices are actually distinct''

\paragraph{J\'onsson,
\textnormal{``Modular lattices and Desargues' theorem''}} \cite{Jons1954a}

Uses ``Arguesian'' and ``lattice of commuting equivalence relations''.

\cite{Jons1954a}*{sec.~1 Def.~1.1}
``Definition 1.1.  A lattice $A$ is said to be projective if it is
complete, atomistic, complemented and modular, and satisfies the
following condition:  If $p$ is an atom of $A$, if $I$ is any set, and
if the elements $a_i\in A$ with $i \in I$ are such that
$p \leq \sum_{i\in I} a_i$, then there exists a finite subset $J$ of
$I$ such that $p \leq \sum_{i \in J} a_i$.''

\cite{Jons1954a}*{sec.~1 Th.~1.7}
``Theorem 1.7.  If $A$ is a projective lattice, then the following
conditions are equivalent:
\begin{enumerate}
\item[(i)] $A$ is isomorphic to a lattice of commuting equivalence
relations.
\item[(ii)] $A$ is isomorphic to a lattice of normal subgroups of a
group.
\item[(iii)] $A$ is isomorphic to a lattice of subgroups of an Abelian
group.
\item[(iv)] $A$ is isomorphic to the lattice of all subspaces of an
Arguesian projective space.
\item[(v)] For any $a, b \in A^3$, if
$$ y = (a_0+a_1)\cdot(b_0+b_1)\cdot[(a_0+a_2)\cdot(b_0+b_2) +
(a_1+a_2)\cdot(b_1+b_2)], $$
then
$$ (a_0+b_0)\cdot(a_1+b_1)\cdot(a_2+b_2) \leq a_0\cdot(y+a_1) +
b_0\cdot(y+b_1) \text{. ''} $$
\end{enumerate}

\cite{Jons1954a}*{sec.~1 Def.~1.8}
``Definition 1.8.  A lattice $B$ is said to be Arguesian if it
satisfies the following condition:  For every $a, b \in B^3$, if
$$ y = (a_0+a_1)\cdot(b_0+b_1)\cdot[(a_0+a_2)\cdot(b_0+b_2) +
(a_1+a_2)\cdot(b_1+b_2)], $$
then
$$ (a_0+b_0)\cdot(a_1+b_1)\cdot(a_2+b_2) \leq a_0\cdot(y+a_1) +
b_0\cdot(y+b_1) \text{. ''} $$

\cite{Jons1954a}*{sec.~1 Th.~1.9}
``Theorem 1.9.  Every arguesian lattice is modular.''

\cite{Jons1954a}*{sec.~1 Th.~2.14}
``Theorem 2.14.  If $B$ is a complemented modular lattice, then the
following conditions are equivalent:
\begin{enumerate}
\item[(i)] $B$ is Arguesian.
\item[(ii)] $B$ is isomorphic to a lattice of commuting equivalence
relations.
\item[(iii)] $B$ is isomorphic to a lattice of normal subgroups of a
group.
\item[(iv)] $B$ is isomorphic to a lattice of subgroups of an Abelian
group.
\item[(v)] $B$ is isomorphic to a lattice of subspaces of an
Arguesian projective space.''
\end{enumerate}

\cite{Jons1954a}*{sec.~3}
Without assuming complementation, each condition in Th.~2.14 implies
all of the preceding conditions.

\cite{Jons1954a}*{sec.~3 Th.~3.6}
``Theorem 3.6.  There exists a five dimensional modular lattice $B$
which is isomorphic to a lattice of commuting equivalence relations, but
not to a lattice of normal subgroups of a group.''

\cite{Jons1954a}*{sec.~3}
``Thus we see that even for finite modular lattices the conditions
(ii) and (iii) of theorem 2.14 are not equivalent.''

\paragraph{J\'onsson,
\textnormal{``Representations of lattices. II. Preliminary report.''}}
\cite{Jons1954c}

\cite{Jons1954c}
Within complemented modular lattices, representability by (i)
commuting equivalence relations, (ii) normal subgroups of a group,
(iii) subgroups of an Abelian group, (iv) subspaces of a (possibly
degenerate) Desarguesian projective space are equivalent.  Similarly
for modular lattices of dimension 4 or less.  But some 5-dimensional
modular lattices that can be represented by normal subgroups of a
group cannot be represented by commuting equivalence relations.

\paragraph{J\'onsson,
\textnormal{``Representation of modular lattices and of relation algebras''}}
\cite{Jons1959a}

\cite{Jons1959a}*{sec.~3}
``While there do exist modular lattices which are not isomorphic to
lattices of commuting equivalence relations, the only known examples
are of a more or less pathological nature, such as the lattice of all
subspaces of a non-Arguesian projective plane
(\cite{Jons1953b}).''

\cite{Jons1959a}*{sec.~3 Th.~2}
``Theorem 2. In order for a lattice $\mathfrak{A}= \langle A,
+, \cdot \rangle$ to be isomorphic
to a lattice of commuting equivalence relations it is necessary and
sufficient that the following condition be satisfied:

``($\Gamma^\prime$) Suppose $n$ is a positive integer, $a_0, a_1, \cdots, a_{2n},
z \in A$ and, for each
positive integer $k \leq n$, $\phi(k)$ and $\psi(k)$ are natural
numbers with $\phi(k) \leq k$ and $\phi(k) \leq k$. Let
$$ b_{1,0,1} = b_{1,1,0} = a_0,\ b_{1,0,0} = b_{1,1,1} = z, $$
and for $k = 1,2, \cdots, n$ and $i, j = 0, 1, \cdots , k$ let
\begin{align*}
b_{k+1,i,j} & = \prod_{p \leq k} (b_{k,i,p} + b_{k,p,j}), \\
b_{k+1,i,k+1} & = b_{k+1,k+1,i} = (b_{k+1,i,\phi(k)} + a_{2k-1})
\cdot (b_{k+1,i,\psi(k)} + a_{2k}), \\
b_{k+1.k+1,k+1} & = z.
\end{align*}
With these notations, if $b_{k,\phi(k),\psi(k)} \leq a_{2k-1} + a_{2k}$
for $k = 1, 2, \cdots, n$, then $a_0 \leq
b_{n+1, 0, 1}$.''

\cite{Jons1959a}*{sec.~4 Prob.~1}
``Problem 1. Can the infinite sets of axioms [Horn sentences]
contained in the conditions ($\Gamma$) and ($\Gamma^\prime$) [above]
be replaced by finite sets of axioms? By sets of equations
[identities]?''

\cite{Jons1959a}*{sec.~4}
``It is an open question whether, conversely, every Arguesian lattice
is isomorphic to a lattice of commuting equivalence relations, but we
know that this is the case for complemented lattices
(\cite{Jons1954a}*{Th.~2.14})''

\paragraph{J\'onsson,
\textnormal{``The class of Arguesian lattices is self-dual''}}
\cite{Jons1972}

Uses ``Arguesian''.

\cite{Jons1972}
``The class of Arguesian lattices is self-dual.''

\cite{Jons1972}
Shows that the Arguesian condition:
for all $a_0, a_1, a_2, b_0, b_1, b_2 \in L$,
$$
(a_0 + b_0) (a_1 + b_1) \leq a_2 + b_2 \Rightarrow 
(a_0 + a_1) (b_0 + b_1) \leq (a_0 + a_2) (b_0 + b_2) + (a_1 + a_2) (b_1 + b_2)
$$
implies the dual of the condition:
for all $x_0, x_1, x_2, y_0, y_1, y_2 \in L$
$$
x_0 y_0 + x_1 y_1 \geq x_2 y_2 \Rightarrow
x_0 x_1 + y_0 y_1 \geq (x_0 x_2 + y_0 y_2) (x_1 x_2 + y_1 y_2).
$$

\paragraph{Lampe,
\textnormal{``A perspective on algebraic representations of lattices''}}
\cite{Lamp1994}

Uses ``Arguesian'' and ``type-1''.

\cite{Lamp1994} ``Modular laws
$$ \textnormal{(a) } z \leq x \Rightarrow x \wedge (y \vee z) = (x \wedge y) \vee z $$
$$ \textnormal{(b) } x \leq y \vee z \Rightarrow x \leq y \vee [z \wedge (x \vee y)]
\text{ ''} $$

\cite{Lamp1994} ``A lattice $L$ is Arguesian iff it satisfies the
following identity
$$ \bigwedge_{i<3} (x_i \vee y_i) \leq (x_0 \wedge (x_1 \vee m)) \vee (y_0 \wedge (y_1 \vee m)) $$
where
$$ m = (x_0 \vee x_1) \wedge (y_0 \vee y_1) \wedge [\{(x_0 \vee x_2) \wedge (y_0 \vee y_2)\} \vee \{(x_2 \vee x_1) \wedge (y_2 \vee y_1)\}] \text{. ''} $$

\paragraph{Kiss and P\'alfy,
\textnormal{``A lattice of normal subgroups that is not embeddable
into the subgroup lattice of an Abelian group''}}
\cite{KissPal1998}

\cite{KissPal1998}*{sec.~1}
``Theorem. The lattice of normal subgroups of the three generator free
group $G$ in the group variety defined by the laws $x^4 - 1$ and
$x^2y - yx^2$ cannot
be embedded into the subgroup lattice of any abelian group.''

\paragraph{Nation,
\textnormal{``J\'onsson's contributions to lattice theory''}}
\cite{Nat1994}

Uses ``Arguesian''.

\cite{Nat1994}
``Bjarni defined a representation of a lattice by equivalence
relations to be of \emph{type $n$} if only $n$ relational compositions
are required to achieve the join. Thus, for example, a representation
by permuting [commuting] equivalence relations would be of type 1.
His first result improved Whitman's theorem to show that every lattice
has a type 3 representation. The next result is even better.  Theorem
(J\'onsson). $L$ has a type 2 representation if and only if it is
modular.''

\cite{Nat1994}
``Bjarni showed in that there is a simpler equation with the same
property, which is now known as the \emph{Arguesian equation}:
$$ (a_0 \vee b_0) \wedge (a_1 \vee b_1) \wedge (a_2 \vee b_2) \leq a_0 \vee
(b_0 \wedge (c \vee b_1)) $$
where
$$ c = c_2 \wedge (c_0 \vee c_1) $$
and
$$ c_i = (a_j \vee a_k) \wedge (b_j \vee b_k) $$
for $\{i,j,k\} = \{0, 1, 2\}$. [...]
Now the Arguesian equation is still pretty horrible as
equations go, but Bjarni showed that it really is equivalent to the
condition
$$ (a_0 \vee b_0) \wedge (a_1 \vee b_1) \leq a_2 \vee b_2
\TNimplies
c_2 \leq c_0 \vee c_1 $$
where $c_i$ is as above, which is the lattice form of Desargues' Law.''

\cite{Nat1994}
``every lattice having a type 1 representation is Arguesian.''

\cite{Nat1994}
``Haiman found examples of Arguesian lattices which don't have a type
1 representation. In fact, Haiman constructed a sequence of examples
which give a stronger result. The class of all lattices with a type 1
representation forms a quasivariety $\mathcal{Q}$, and Bjarni gave an infinite
set of Horn sentences determining $\mathcal{Q}$.

``Theorem (Haiman). The
quasivariety $\mathcal{Q}$ of all lattices with a type 1 representation is not
finitely based.

``So in particular, a single equation like the
Arguesian law does not characterize $\mathcal{Q}$. It is unknown
whether $\mathcal{Q}$ is
a variety, i.e., if it is closed under homomorphic images.''

\paragraph{Nation, \emph{Notes on Lattice Theory}} \cite{Nat2017}

Uses ``Arguesian''.  Uses ``lattice of permuting equivalence relations''
and ``representation of type 1 (or 2 or 3)''.

\cite{Nat2017}*{ch.~4}``A lattice is said to be Arguesian if it
satisfies
$$ \textnormal{($A$) } (a_0 \vee b_0) \wedge (a_1 \vee b_1) \leq a_2 \vee b_2 \TNimplies c_2 \leq c_0 \vee c_1 $$
where
$$ c_i = (a_j \vee a_k) \wedge (b_j \vee b_k) $$
for $\{i, j, k\} = \{0, 1, 2\}$. The Arguesian law is (less obviously)
equivalent to a lattice inclusion,
$$ \textnormal{($A^\prime$) } (a_0 \vee b_0) \wedge (a_1 \vee b_1) \wedge (a_2 \vee b_2) \leq a_0 \vee (b_0 \wedge (c \vee b_1)) $$
where
$$ c = c_2 \wedge (c_0 \vee c_1) \text{. ''} $$

\cite{Nat2017}*{ch.~4 Th.~4.6 Cor.}``Theorem 4.6. If $\mathcal{L}$ is a
sublattice of $\textbf{Eq } X$ with the
property that $R \vee S = R \circ S$ for all $R, S \in \mathcal{L}$,
then $\mathcal{L}$ satisfies the Arguesian law.

``Corollary. Every lattice that has a type 1 representation is Arguesian.''

\cite{Nat2017}*{ch.~4}``Note that the normal subgroup lattice of a
group $G$ has a natural representation $(X, F)$: take $X = G$ and
$F(N) = \{(x, y) \in G^2:  xy^{-1} \in N\}$. This representation is in
fact type 1 (Exercise 3).''

\cite{Nat2017}*{ch.~4}``It follows that the lattice of normal
subgroups of a group is not only modular, but Arguesian; see exercise
3. All these types of lattices are Arguesian [and have a type 1
representation]: the lattice of subgroups of an abelian group, the
lattice of ideals of a ring, the lattice of subspaces of a vector
space, the lattice of submodules of a module. More generally, Ralph
Freese and Bjarni J\'onsson proved that if $\mathcal{V}$ is a variety
of algebras, all of whose congruence lattices are modular (such as
groups or rings), then the congruence lattices of algebras in
$\mathcal{V}$ are Arguesian [4].''

\cite{Nat2017}*{ch.~4} ``Interestingly, P.\ P.\ P\'alfy
and Laszlo Szab\'o have shown that subgroup lattices of abelian groups
satisfy an equation that does not hold in all normal subgroup lattices
\cite{PalSzab1995}.''

\cite{Nat2017}*{ch.~4} ``The question remains: \emph{Does there exist a set
of equations $\Sigma$ such that a lattice has a type 1 representation
if and only if it satisfies all the equations of $\Sigma$?} Haiman
proved that if such a $\Sigma$ exists, it must contain infinitely many
equations. In Chapter 7 we will see that a class of lattices is
characterized by a set of equations if and only if it is closed with
respect to direct products, sublattices, and homomorphic images.  The
class of lattices having a type 1 representation is easily seen to be
closed under sublattices and direct products, so the question is
equivalent to: \emph{Is the class of all lattices having a type 1
representation closed under homomorphic images?}''

\paragraph{Nation,
\textnormal{``Tribute to Bjarni J{\'o}nsson''}}
\cite{Nat2018}

Uses ``Arguesian''.

\paragraph{Nation and Pickering,
\textnormal{``Arguesian lattices whose skeleton is a chain''}}
\cite{NatPick1987}

Uses ``arguesian'' and ``type-1-representable''.

\cite{NatPick1987}
\cite{Jons1953b} points out that if $L$ is linear it satisfies
``the \emph{arguesian} lattice identity,
$$ (a_0 + b_0)(a_1 + b_1)(a_2 + b_2) \leq a_0 + b_0(b_1 + c_2(c_0 + c_1)) $$
where
$$ c_i = (a_j + a_k)(b_j + b_k) $$
for $i \neq j \neq k \neq i$.''

\cite{NatPick1987}
``\cite{Haim1987} has shown that the converse is false:
there exist arguesian lattices which do not have a type 1 representation.
In fact.
no finite set of universal Horn sentences suffices to characterize the class of
lattices having a type 1 representation.''

\cite{NatPick1987}
``\cite{Jons1959a} has given an infinite set of
universal Horn sentences which does characterize [linear lattices].''

\cite{NatPick1987}
``the classical theorem of projective geometry (see [1], [4]): If $L$ is a
finite dimensional complemented arguesian lattice, then $L$ is a
direct product of
lattices $L_i$, each of which is isomorphic to the lattice of all
subspaces of a vector
space over a division ring $K_i$. This naturally induces a type 1
representation of $L$.''

\cite{NatPick1987}
``Theorem. If $L$ is a finite dimensional arguesian lattice and [the
skeleton of $L$] is a chain, then $L$ has a type 1 representation.''

\cite{NatPick1987}
``It is easy to see that a subdirect product of type-1-representable lattices is
type-1-representable \cite{Jons1959a}.''

\cite{NatPick1987}*{Proof of Lem.~2}
``for a finite dimensional modular lattice, [$L$ is simple] is
equivalent to every pair of prime quotients in $L$ being
projective.''

\cite{NatPick1987}*{Proof of Lem.~4}
``For a fixed lattice $L$, the type 1 representations of $L$ form an
elementary class of relational structures. Using this, it is not hard
to show that a
lattice $L$ has a type 1 representation if and only if every finitely generated
sublattice of $L$ has a type 1 representation \cite{Jons1959a}.
Therefore [if every countable sublattice of $L$ is
type-1-representable, $L$ is.]''

\cite{NatPick1987}*{Proof of Lem.~4}
``Moreover, using the construction from \cite{Jons1959a} (or a
simple direct argument), if a countable lattice $L$ has a type 1
representation, it has a type 1 representation with a countable base
set.''

\paragraph{Ore, \textnormal{``Theory of equivalence relations''}}
\cite{Ore1942}

\cite{Ore1942}*{Ch.~1 sec.~5 eq.~5}
``One of the most important relations in the theory
of structures is the so-called \emph{Dedekind law}
\begin{equation*}
\tag{4} A \cap (B \cup C) = B \cup (A \cap C) \qquad (A \supset B).
\end{equation*}
Sometimes it is also formulated
\begin{equation*}
\tag{5} A = B \cup (A \cap C) \qquad
(B \cup C \supset A \supset B) \textnormal{. ''}
\end{equation*}

\paragraph{P\'alfy,
\textnormal{``Groups and lattices''}} \cite{Pal2001}

Uses ``arguesian''.

\cite{Pal2001}*{sec.~2 Th.~2.1}
``Theorem 2.1 (Whitman, 1946) Every lattice is isomorphic to a sublattice of
the subgroup lattice of some group.''

\cite{Pal2001}*{sec.~2 Th.~2.2}
``Theorem 2.2 (\cite{PudTum1980}) Every finite lattice is isomorphic to
a sublattice of the subgroup lattice of some finite group.''

\cite{Pal2001}*{sec.~2}
``Recall that a lattice $L$ is called distributive if the following
equivalent conditions hold for every $x$, $y$, $z \in L$:
\begin{enumerate}
\item $x \vee (y \wedge z)=(x \vee y) \wedge (x \vee z)$;
\item $x \wedge (y \vee z)=(x \wedge y) \vee (x \wedge z)$;
\item $(x \vee y) \wedge (x \vee z) \wedge (y \vee z)=
(x \wedge y) \vee (x \wedge z) \vee (y \wedge z)$.''
\end{enumerate}

\cite{Pal2001}*{sec.~3 Th.~3.3}
``Theorem 3.3 (Gr\"atzer and Schmidt, 1963) For every algebraic
lattice $L$ there exists an algebra $\boldsymbol{A}$ such that
$\textnormal{Con } \boldsymbol{A} \cong L$ [the lattice of congruences
of $\boldsymbol{A}$ is isomorphic to $L$].''

\cite{Pal2001}*{sec.~3 Prob.~3.4}
It is unknown whether for every finite lattice $L$ there exists a
finite algebra $\boldsymbol{A}$ such that
$\textnormal{Con } \boldsymbol{A} \cong L$.

\cite{Pal2001}*{sec.~4}
Modular lattices can be defined via a number of equivalent conditions:
\begin{enumerate}
\item $x \geq z$ implies $(x \wedge y) \vee z = x \wedge (y \vee z)$,
\item $x \geq z$ implies $(x \wedge y) \vee z \geq x \wedge (y \vee z)$,
\item $(x \wedge y) \vee (x \wedge z) = x \wedge [y \vee (x \wedge z)]$,
\item the lattice does not contain the pentagon sublattice.
\end{enumerate}

\cite{Pal2001}*{sec.~4}
``There exist laws of normal subgroup lattices that are even stronger
than modularity. The most important one is the \emph{arguesian law}
introduced by Bjarni J\'onsson in 1954. (The idea appeared earlier
in \cite{Schut1945}.) This is a translation of
Desargues' Theorem from projective geometry into the language of
lattices. Among the several equivalent formulations we prefer the
following form:
$$ X_1 \wedge\{Y_1 \vee [(X_2 \vee Y_2)\wedge(X_3 \vee Y_3)]\} \leq
[(Q_{12} \vee Q_{23})\wedge(Y_1 \vee Y_3)]\vee X_3, $$
where $Q_{ij} = (X_i \vee X_j)\wedge(Y_i \vee Y_j)$.''

\cite{Pal2001}*{sec.~4}
``So the arguesian law holds in the subspace lattice of a projective
plane if and only if Desargues' Theorem is true in the geometry. Since
there are nonarguesian planes, the arguesian law is stronger than the
modular law, as the subspace lattice is always modular.''

\cite{Pal2001}*{sec.~4 Th.~4.1}
``Theorem 4.1 (J\'onsson, 1954) The arguesian law holds in the normal
subgroup lattice of every group.''

\cite{Pal2001}*{sec.~4}
``In fact the arguesian law holds in every lattice consisting of
[commuting] equivalence relations.''

\cite{Pal2001}*{sec.~4}
``Mark Haiman in 1987 discovered a sequence of laws, the higher arguesian
identities
$$ X_1 \wedge \left[ Y_1 \vee \bigwedge_{i=2}^n (X_i \vee Y_i) \right] \leq
\left[ \bigvee_{i=1}^{n-1} Q_{i,i+1} \wedge (Y_1 \vee Y_n) \right] \vee X_n, $$
where $Q_{ij} = (X_i \vee X_j) \wedge (Y_i \vee Y_j)$, each one being
strictly stronger than the
previous one, that all hold in every lattice consisting of permuting
equivalence relations. ... Like the modular and the arguesian laws, the higher
arguesian identities hold not only in subgroup lattices of abelian
groups (as suggested by the underlying geometry), but also in normal
subgroup lattices of arbitrary groups.''

\cite{Pal2001}*{sec.~4}
``Later it was proved by Ralph Freese \cite{Frees1994} that
there is no finite basis for the laws of the class of all normal
subgroup lattices.''

\cite{Pal2001}*{sec.~4}
``the higher arguesian identities hold not only in subgroup lattices
of abelian groups (as suggested by the underlying geometry), but also
in normal subgroup lattices of arbitrary groups.''

\cite{Pal2001}*{sec.~4}
``Problem 4.2 (\cite{Jons1954a}; \cite{Birk1967}*{p.~179}) Can one
embed the normal subgroup lattice of an arbitrary group into the
subgroup lattice of an abelian group? Do all the laws of subgroup
lattices of abelian groups hold in normal subgroup lattices?''

\paragraph{P\'alfy and Szab\'o,
\textnormal{``An identity for subgroup lattices of Abelian groups''}}
\cite{PalSzab1995}

Uses ``Arguesian''.

\cite{PalSzab1995}
``subgroup lattices of Abelian groups satisfy the modular identity''

\cite{PalSzab1995}
``\cite{Jons1953b} introduced a lattice identity
equivalent to Desargues' theorem in projective geometry:
$$ (x_0 \vee y_0) \wedge (x_1 \vee y_1) \wedge (x_2 \vee y_2) \leq
[(z \vee x_1) \wedge x_0] \vee [(z \vee y_1) \wedge y_0], $$
where $z = z_{01} \wedge (z_{02} \vee z_{12})$
with $z_{ij} = (x_i \vee x_j) \wedge (y_i \vee y_j)$.
\cite{Haim1985a} discovered some stronger ``higher Arguesian
identities''. All these identities hold in subgroup lattices of
Abelian groups.  Moreover, they even hold in every lattice consisting
of pairwise permuting [commuting] equivalence relations, in
particular, in the lattice of normal subgroups of an arbitrary group.''

\cite{PalSzab1995}
``Theorem. The identity
$$ x_1 \wedge \{y_1 \vee [(x_2 \vee y_2) \wedge (x_3 \vee y_3) \wedge (x_4 \vee y_4)]\}
\leq [(p_{12} \vee p_{34}) \wedge (p_{13} \vee p_{24}) \wedge (p_{14} \vee p_{23})] \vee x_2 \vee y_3 \vee y_4 $$
where $p_{ij} = (x_i \vee y_j) \wedge (x_j \vee y_i)$, holds in the
subgroup lattice of every Abelian group but fails in the lattice of
normal subgroups of some finite group.''

\paragraph{Pudl\'ak and T\r{u}ma,
\textnormal{``Every finite lattice can be embedded in a finite partition lattice''}}
\cite{PudTum1980}

\cite{PudTum1980}*{final Th.}
``Theorem. For every [finite] lattice $L$, there exists a positive
integer $n_0$, such that for
every $n \geq n_0$, there is a normal embedding [preserving $\hat{0}$
and $\hat{1}$] $\phi:L \rightarrow \textnormal{Eq }(A)$, where $|A| = n$.''

\paragraph{Whitman,
\textnormal{``Lattices, equivalence relations, and subgroups''}}
\cite{Whit1946}

\cite{Whit1946}*{sec.~5 Th.~1}
``Theorem 1.  Any lattice is isomorphic to a sublattice of the lattice
of all equivalence relations on some set.''

\paragraph{Yan,
\textnormal{``Distributive laws for commuting equivalence relations''}}
\cite{Yan1998}

\cite{Yan1998}
``\cite{Ore1942} found necessary and sufficient conditions
under which the modular and distributive laws hold in the lattice of
equivalence relations on a set $S$. In the present paper, we consider
commuting equivalence relations. It has been proved by \cite{Jons1953b}
that the modular law holds in the lattice of commuting equivalence
relations. We give some necessary and sufficient conditions for the
distributive law and its dual to hold for commuting equivalence
relations.''

\section{Survey by properties} \label{sec:by-property}

\paragraph{Generalities}

We use ``Arguesian'' rather than ``arguesian'', as that is the
consensus of authors, and by analogy with, ``Boolean'', ``Euclidean'', and
``Darwinian''.  By contrast, ``abelian'' is often used by authors
instead of ``Abelian''.
We use ``linear'' for lattices that can be represented by commuting
equivalence relations, following \cite{Haim1984a}*{Intro.}, which follows
Gian-Carlo Rota.

\begin{definition}
An \emph{equation} or \emph{identity} within a type of abstract
algebra is a statement
$$ P(\boldsymbol{x}) = Q(\boldsymbol{x}) $$
where $\boldsymbol{x}$ is a (possibly empty) finite set of variables
that take values from the set of elements of the abstract algebra, and $P$
and $Q$ are formulas composed of the variables $\boldsymbol{x}$, the
constants of the type, and the operations of the type.
\end{definition}

\begin{definition}\cite{Gratz1979}*{App.~4 sec.~63}
A \emph{variety} or \emph{equationally defined} class of algebras is
the class of algebras of which all elements satisfy a particular (not
necessarily finite) set of equations.
\end{definition}

\begin{theorem}(Birkhoff's Variety Theorem \cite{Birk1935}*{sec.~10 Th.~10}\cite{Gratz1979}*{sec.~23 and sec.~26 Th.~3}\cite{WikiVar}) \label{th:birk-variety}
A class of algebras is a variety iff the class is closed under taking
subalgebras, direct products, and homomorphic images.
\end{theorem}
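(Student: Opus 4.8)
The plan is to prove both implications, with essentially all of the content lying in the converse direction. For the forward direction---that an equationally defined class is closed under subalgebras, direct products, and homomorphic images---I would check that satisfaction of a single identity $P(\boldsymbol{x}) = Q(\boldsymbol{x})$ is inherited in each case: a subalgebra's elements already live in the ambient algebra where the identity holds; identities in a direct product are verified coordinatewise; and for a surjective homomorphism $h$, one lifts any tuple of arguments along $h$, applies the identity upstairs, and pushes the resulting equation forward through $h$. Applying this to each identity of a defining set $\Sigma$ shows $\mathrm{Mod}(\Sigma)$ is closed under all three operations.

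For the converse, suppose $\mathcal{K}$ is closed under $\mathbf{H}$, $\mathbf{S}$, and $\mathbf{P}$, and let $\Sigma = \mathrm{Id}(\mathcal{K})$ be the set of all identities valid throughout $\mathcal{K}$. The inclusion $\mathcal{K} \subseteq \mathrm{Mod}(\Sigma)$ is immediate, so the task is the reverse: any algebra satisfying all of $\Sigma$ must already belong to $\mathcal{K}$. The engine is the relatively free algebra over $\mathcal{K}$. For a set $X$ of variables I would form the term algebra $T(X)$, take the congruence $\theta_X = \{(p,q) : \mathcal{K} \models p \approx q\}$, and set $F_{\mathcal{K}}(X) = T(X)/\theta_X$. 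The key lemma is that $F_{\mathcal{K}}(X) \in \mathcal{K}$: the map sending a term $t$ to the tuple $(h(t))_{(A,h)}$, indexed by pairs of an algebra $A \in \mathcal{K}$ and a homomorphism $h : T(X) \to A$, has kernel exactly $\theta_X$, so $F_{\mathcal{K}}(X)$ embeds as a subalgebra of a product of members of $\mathcal{K}$, and closure under $\mathbf{P}$ followed by $\mathbf{S}$ yields membership in $\mathcal{K}$.

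To finish, given any $A \in \mathrm{Mod}(\Sigma)$, I would take $X = A$ as a generating set with the canonical surjection $g : T(A) \to A$. Since $A$ satisfies every identity of $\Sigma = \mathrm{Id}(\mathcal{K})$, one gets $\theta_A \subseteq \ker g$, so $g$ factors through a surjective homomorphism from $F_{\mathcal{K}}(A)$ onto $A$. As $F_{\mathcal{K}}(A) \in \mathcal{K}$ by the key lemma and $\mathcal{K}$ is closed under $\mathbf{H}$, it follows that $A \in \mathcal{K}$, establishing the reverse inclusion and hence $\mathcal{K} = \mathrm{Mod}(\Sigma)$.

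The hard part is the key lemma, and the subtle point inside it is set-theoretic: the members of $\mathcal{K}$ form a proper class, so the product defining the embedding cannot literally be indexed by all of them. I would resolve this in the usual way by restricting to a set of isomorphism representatives of the algebras in $\mathcal{K}$ generated by at most $|X|$ elements---every image $h(T(X))$ is generated by the images of the $\leq |X|$ variables and so has cardinality bounded by $|T(X)|$---which makes the index a genuine set and legitimizes the appeal to closure under $\mathbf{P}$.
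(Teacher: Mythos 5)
Your proof is correct: it is the classical HSP argument, and you handle the two points where blind reconstructions usually stumble (the kernel computation showing $F_{\mathcal{K}}(X)$ embeds in a product of members of $\mathcal{K}$, and the set-theoretic restriction to isomorphism representatives of $\leq|X|$-generated algebras so that the product is indexed by a genuine set). Note, however, that the paper itself offers no proof of this theorem at all --- it is a survey, and the statement is simply quoted with citations to Birkhoff's 1935 paper, Gr\"atzer's \emph{Universal Algebra}, and Wikipedia. So there is nothing internal to compare against; your argument is essentially the standard one found in the cited references (term algebra, the congruence $\theta_X$ of $\mathcal{K}$-valid identities, the relatively free algebra $F_{\mathcal{K}}(X)$ realized inside $\mathbf{SP}(\mathcal{K})$, and the factorization of the evaluation map $T(A)\to A$ through $F_{\mathcal{K}}(A)$ followed by closure under $\mathbf{H}$), and it supplies exactly what the paper defers to the literature.
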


\begin{definition}
An \emph{implication}, \emph{equational implication},
\emph{universal Horn sentence}, or \emph{sentence} within a type
of abstract algebra is a statement
$$ P_1(\boldsymbol{x}) = Q_1(\boldsymbol{x}) \TNand
   P_2(\boldsymbol{x}) = Q_2(\boldsymbol{x}) \TNand
   \ldots
   P_n(\boldsymbol{x}) = Q_n(\boldsymbol{x})
   \TNimplies P(\boldsymbol{x}) = Q(\boldsymbol{x}) $$
where $\boldsymbol{x}$ is a (possibly empty) finite set of variables
that take values from set of elements of the abstract algebra,
$n \geq 0$, $P_i$, $Q_i$, $P$, and $Q$
are formulas composed of the variables $\boldsymbol{x}$, the
constants of the type of abstract algebra, and the
operations of the type of abstract algebra.
\end{definition}

\begin{definition}\cite{Gratz1979}*{App.~4 sec.~63}
An \emph{implicationally defined} class of algebras is
the class of algebras of which all elements satisfy a particular (not
necessarily finite) set of implications.
\end{definition}

\begin{theorem}\cite{Gratz1979}*{App.~4 sec.~63 Th.~3}
A class of algebras is implicationally defined iff the class is closed
under taking subalgebras, direct products, isomorphic images, and
direct limits.
\end{theorem}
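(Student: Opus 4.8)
The plan is to prove both directions, treating the forward implication as the routine Horn-preservation direction and concentrating the real effort on the converse, where the finite-premise nature of implications is captured precisely by closure under direct limits. For the forward direction, suppose $K$ is the class of all algebras satisfying a set $\Sigma$ of implications. Closure under isomorphic images is immediate, since satisfaction of any formula is invariant under isomorphism. Closure under subalgebras holds because each implication is universally quantified: any valuation of its finitely many variables into a subalgebra $B \le A$ is also a valuation into $A$, and the quantifier-free terms take identical values in $B$ and $A$. Closure under direct products is the defining Horn property: given $A = \prod_t A_t$ with each $A_t \in K$ and a valuation making all premises $P_i = Q_i$ true in $A$, every coordinate satisfies the premises, hence (as $A_t \models \Sigma$) the conclusion, so the conclusion holds in $A$. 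Closure under direct limits is where finiteness enters: for $A = \varinjlim A_t$ and $\sigma \in \Sigma$, a valuation of the finitely many variables of $\sigma$ into $A$ lifts to some stage, and if the finitely many premises hold in $A$ then, by the colimit description of equality together with directedness, they already hold at a single common stage $A_k$, where $A_k \models \sigma$ forces the conclusion, which then transports to $A$.

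For the converse, let $\Sigma$ be the set of all implications valid throughout $K$ and let $K^{\ast}$ be the class of all algebras satisfying $\Sigma$; plainly $K \subseteq K^{\ast}$, and the task is to show $K^{\ast} \subseteq K$. First I would reduce to finitely generated algebras: every $A \in K^{\ast}$ is the direct limit of its directed system of finitely generated subalgebras under inclusion, each such subalgebra again lies in $K^{\ast}$ (implications, being universal, pass to subalgebras), so by closure under direct limits it suffices to place each finitely generated $B \in K^{\ast}$ into $K$. For such a $B$ with generators $b_1,\dots,b_m$, I would prove a \emph{separation lemma}: for every pair $u \neq v$ in $B$ there is an algebra $C \in K$ and a homomorphism $f\colon B \to C$ with $f(u) \neq f(v)$. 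Granting this, the product of these separating maps embeds $B$ into a product of members of $K$; closure under products and then subalgebras yields $B \in K$, completing the reduction.

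The separation lemma is the heart of the argument. Writing $u = P_u(\bar b)$ and $v = P_v(\bar b)$, and letting $D_B$ denote the (generally infinite) set of all equations $P(\bar x) = Q(\bar x)$ that hold in $B$ under $\bar x \mapsto \bar b$, a homomorphism $B \to C$ is exactly a valuation in $C$ satisfying every equation of $D_B$. Thus if no separating $C$ existed, the infinite-premise implication `` $\bigwedge_{D_B}\Rightarrow P_u = P_v$ '' would hold throughout $K$. The key is to reduce its infinitely many premises to finitely many: I claim some finite $D_0 \subseteq D_B$ already satisfies `` $\bigwedge_{D_0}\Rightarrow P_u = P_v$ '' throughout $K$. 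This finite implication then lies in $\Sigma$, so $B$ satisfies it; but $B$ trivially satisfies its own relations $D_0$, forcing $u = v$ in $B$, a contradiction.

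The main obstacle is establishing exactly this finite-premise reduction using only the hypothesis of closure under direct limits, in place of the compactness theorem or closure under ultraproducts, either of which would make the step immediate. Concretely, if no finite $D_0$ sufficed, then for each finite $D_0 \subseteq D_B$ one obtains $C_{D_0} \in K$ with a valuation satisfying $D_0$ yet separating $P_u$ and $P_v$; the finite subsets of $D_B$ form a directed poset, and the task is to assemble the algebras $C_{D_0}$, with suitably chosen connecting homomorphisms that respect the growing premise sets, into a genuine directed system whose colimit is an algebra in $K$ satisfying all of $D_B$ while still separating $u$ and $v$, thereby contradicting the validity of the infinite-premise implication across $K$. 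Constructing these compatible connecting maps, rather than merely possessing the family of algebras, is the delicate point, and it is precisely here that closure under direct limits does the work distinguishing finite-premise Horn definability from the weaker infinite-premise definability guaranteed by closure under subalgebras and direct products alone.
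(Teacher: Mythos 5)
The paper offers no proof of this theorem --- it is quoted as a known result from Gr\"atzer's \emph{Universal Algebra} --- so your proposal must stand on its own, and it does not: there is a genuine gap at precisely the step you yourself label ``the delicate point.'' Your forward direction is correct, and the skeleton of your converse (reduce to finitely generated algebras via the directed system of finitely generated subalgebras; prove a separation lemma; embed $B$ into a product of members of $K$) is the right one. But the entire converse rests on the finite-premise reduction --- that some finite $D_0 \subseteq D_B$ already makes ``$\bigwedge D_0 \Rightarrow P_u = P_v$'' valid throughout $K$ --- and this you never prove. The plan you sketch for it cannot work as stated: the algebras $C_{D_0}$ are arbitrarily chosen witnesses, and for $D_0 \subseteq D_1$ there is in general no homomorphism $C_{D_0} \to C_{D_1}$ at all, let alone one carrying the designated valuation to the designated valuation. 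A family of algebras indexed by a directed poset is not a directed system, and closure under direct limits gives you nothing until compatible connecting maps actually exist.

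The missing idea is to replace your chosen witnesses by canonical ones. Let $F$ be the term algebra on the fixed generators $x_1, \dots, x_m$ of $B$, and for each finite $D_0 \subseteq D_B$ let $\theta^K_{D_0}$ be the intersection of all congruences $\varphi$ of $F$ that contain the relations $D_0$ and satisfy $F/\varphi \in K$; set $R_{D_0} = F/\theta^K_{D_0}$. This family of congruences is nonempty (it contains the total congruence, since the one-element algebra is the empty direct product of members of $K$), and $R_{D_0}$ embeds subdirectly into the product of the $F/\varphi$, so closure under products, subalgebras, and isomorphic images puts $R_{D_0} \in K$. Moreover $D_0 \subseteq D_1$ forces $\theta^K_{D_0} \subseteq \theta^K_{D_1}$, so the canonical surjections $R_{D_0} \to R_{D_1}$ make $(R_{D_0})$ a genuine directed system. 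Your separating witness $C_{D_0}$ factors through $R_{D_0}$, which shows $P_u \neq P_v$ in every $R_{D_0}$; since two elements of a stage become equal in a directed colimit only if they become equal at some later stage, $P_u \neq P_v$ persists in $R = \varinjlim R_{D_0}$, which lies in $K$ by the direct-limit hypothesis; and the limit valuation satisfies every equation of $D_B$ (each lies in some finite $D_0$ and holds in $R_{D_0}$ by construction), so it induces a homomorphism $B \to R$ separating $u$ from $v$ --- contradicting your assumption that no separating homomorphism into a member of $K$ exists. (Equivalently: $(P_u, P_v) \in \theta^K_{D_1}$ for some finite $D_1$ says exactly that ``$\bigwedge D_1 \Rightarrow P_u = P_v$'' holds throughout $K$, which is the finite-premise reduction you needed.) With this substitution your argument closes; without it, the proof is incomplete at its crux.
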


\begin{theorem}
In a lattice, $P \leq Q$ iff $P \vee Q = Q$ iff $P \wedge Q = P$.
Thus, a statement $P \leq Q$ can be considered an abbreviation for either
of these equations.
\end{theorem}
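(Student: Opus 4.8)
The plan is to prove the two equivalences $P \leq Q \iff P \vee Q = Q$ and $P \leq Q \iff P \wedge Q = P$ separately, working directly from the defining universal properties of join and meet as least upper bound and greatest lower bound, together with the reflexivity and antisymmetry of the order $\leq$. Chaining the two through their common middle term $P \leq Q$ then yields the full three-way equivalence.

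First I would establish $P \leq Q \iff P \vee Q = Q$. For the forward direction, assuming $P \leq Q$, I observe that $Q$ is then an upper bound of $\{P, Q\}$ (using $P \leq Q$ and $Q \leq Q$), while every upper bound of $\{P, Q\}$ dominates $Q$; hence $Q$ is the least upper bound, so $P \vee Q = Q$. For the reverse direction I use the fact that $P \leq P \vee Q$ always holds, the join being by definition an upper bound of its arguments; substituting the hypothesis $P \vee Q = Q$ gives $P \leq Q$ at once.

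The second equivalence $P \leq Q \iff P \wedge Q = P$ is the order dual of the first, and I would prove it by the dual argument: if $P \leq Q$ then $P$ is a lower bound of $\{P, Q\}$ that dominates every lower bound, hence is the greatest lower bound, so $P \wedge Q = P$; conversely $P \wedge Q \leq Q$ always holds, so the hypothesis $P \wedge Q = P$ yields $P \leq Q$. The concluding remark, that $P \leq Q$ may be regarded as an abbreviation for either equation, then follows immediately from the three statements being equivalent.

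There is no substantial obstacle here; the result is foundational, and each implication is a one-line appeal to the universal property of $\vee$ or $\wedge$. The only point requiring a little care is the correct use of antisymmetry of $\leq$ when passing from ``$Q$ is an upper bound dominated by every upper bound'' to the \emph{equality} $P \vee Q = Q$ rather than the mere inequality $P \vee Q \leq Q$, and dually for the meet.
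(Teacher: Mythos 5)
Your proof is correct, but note that the paper itself offers no proof of this theorem at all: it is stated as a background fact in the ``Generalities'' subsection, with no proof environment following it, so there is no argument of the paper's to compare against. Your order-theoretic argument --- characterizing $P \vee Q$ as the least upper bound, observing that under $P \leq Q$ the element $Q$ is itself an upper bound dominated by every upper bound, and invoking antisymmetry to get equality, with the dual argument for the meet --- is the standard proof when a lattice is taken to be a poset in which all binary joins and meets exist. One remark worth making: elsewhere the paper treats lattices \emph{equationally} (e.g.\ ``the axioms of lattices are a finite set of identities''), and under that algebraic definition the order $\leq$ is not primitive but is \emph{defined} by one of the two equations; in that setting the substantive content of the theorem is that the two candidate definitions agree, which is proved instead from the absorption laws: if $P \vee Q = Q$ then $P \wedge Q = P \wedge (P \vee Q) = P$, and if $P \wedge Q = P$ then $P \vee Q = (P \wedge Q) \vee Q = Q$. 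Either route is legitimate; yours requires the order-theoretic definition as the starting point, while the absorption-law route matches the paper's equational viewpoint and is what makes the closing remark (that $P \leq Q$ may serve as an abbreviation for either identity, e.g.\ when writing inequalities as identities defining a variety) do its intended work.
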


\begin{theorem}
An equation is a special case of an implication (with $n = 0$).
Thus, any equationally defined class of algebras is an implicationally
defined class of algebras.
\end{theorem}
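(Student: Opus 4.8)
The plan is to unwind the two definitions; the reduction is purely a matter of syntactic and semantic bookkeeping, and the identical witnessing set will serve for both notions of definability.

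First I would treat the opening sentence. The definition of an implication permits $n \geq 0$, so the case $n = 0$ is allowed. In that case the list of hypotheses $P_1(\boldsymbol{x}) = Q_1(\boldsymbol{x})$ and $\ldots$ and $P_n(\boldsymbol{x}) = Q_n(\boldsymbol{x})$ is empty, and by the usual convention an empty conjunction is true. Consequently an implication with $n = 0$ asserts nothing beyond its conclusion $P(\boldsymbol{x}) = Q(\boldsymbol{x})$, which is exactly the form prescribed for an equation. Thus an equation is syntactically the $n = 0$ instance of an implication, and semantically an algebra satisfies this implication precisely when it satisfies the equation $P = Q$, since the antecedent holds under every assignment of the variables.

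Next I would pass from single sentences to classes. Let $\mathcal{K}$ be an equationally defined class, witnessed by a set $\Sigma$ of equations, so that $\mathcal{K}$ consists of exactly those algebras satisfying every equation in $\Sigma$. By the first part, each equation in $\Sigma$ is an implication with $n = 0$, so $\Sigma$ may equally be read as a set of implications; and because an algebra satisfies the $n = 0$ implication if and only if it satisfies the corresponding equation, the class of algebras satisfying $\Sigma$ read as implications coincides with $\mathcal{K}$. Hence $\mathcal{K}$ is implicationally defined, by the very same set $\Sigma$.

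There is no genuine obstacle here; the one point requiring care is the convention that the empty conjunction of hypotheses is true, so that the $n = 0$ implication collapses to its conclusion. Once that convention is fixed, both assertions follow directly from the definitions, the key observation being that the single set $\Sigma$ witnesses definability in both senses.
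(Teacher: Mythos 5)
Your proof is correct and takes essentially the same route the paper does: the paper treats the result as immediate from its definitions (the parenthetical ``with $n=0$'' in the statement is the whole idea), and your write-up simply makes explicit the empty-conjunction convention and the observation that the same witnessing set $\Sigma$ serves for both notions of definability.
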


\begin{definition}
A class of latices is \emph{self-dual} if, for every lattice in the
class, its dual is also a member of the class, or equivalently,
if the \emph{dual class} composed of the duals of all members of the
class is the same class.
\end{definition}

\paragraph{General lattices}

\begin{theorem}
The class of latices is defined by finite set of identities and is self-dual.
\end{theorem}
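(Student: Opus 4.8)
The plan is to prove two independent claims: that the class of lattices is defined by a finite set of identities, and that this class is self-dual.

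First I would exhibit the finite set of identities that defines lattices. Recall that a lattice is an algebra $(L; \vee, \wedge)$ whose two binary operations satisfy the associative, commutative, and absorption laws. Explicitly, the defining identities are $x \vee (y \vee z) = (x \vee y) \vee z$, $x \wedge (y \wedge z) = (x \wedge y) \wedge z$ (associativity); $x \vee y = y \vee x$, $x \wedge y = y \wedge x$ (commutativity); and $x \vee (x \wedge y) = x$, $x \wedge (x \vee y) = x$ (absorption). This is a finite list of six identities. One must verify that an algebra satisfying exactly these is a lattice in the order-theoretic sense: from the absorption laws one first deduces idempotence ($x \vee x = x$ and $x \wedge x = x$), then shows that $x \wedge y = x$ and $x \vee y = y$ are equivalent conditions, and defines $x \leq y$ to hold when either (equivalently both) holds. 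One then checks that this relation is a partial order and that $\vee$, $\wedge$ compute least upper bounds and greatest lower bounds. This is the standard equivalence between the algebraic and order-theoretic definitions of a lattice; I would cite it as routine rather than grind through it.

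For self-duality, I would argue directly from the structure of the defining identities. The dual of a lattice $L$ is obtained by interchanging $\vee$ and $\wedge$, equivalently by reversing the partial order. Observe that the six defining identities come in dual pairs: the two associativity laws are duals of each other, as are the two commutativity laws and the two absorption laws. Hence the set of defining identities is mapped to itself under the $\vee \leftrightarrow \wedge$ interchange. Therefore, if $L$ satisfies all of them, so does its dual $L^d$, which means $L^d$ is again a lattice. By the definition of self-dual given earlier in the excerpt, the class of lattices is self-dual.

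I do not anticipate a serious obstacle here; the statement is foundational. The only point requiring genuine care is the verification that the equational definition coincides with the order-theoretic one, but this is classical and can be invoked. The self-duality half is essentially immediate once one notes that the identity set is closed under the dualizing interchange.
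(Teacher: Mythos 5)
Your proposal is correct and takes essentially the same approach as the paper, whose entire proof is the one-line observation that ``the axioms of lattices are a finite set of identities, and the set of them is self-dual.'' You simply spell out what the paper leaves implicit: the six defining identities, their pairing under the $\vee \leftrightarrow \wedge$ interchange, and the (correctly cited as classical) equivalence with the order-theoretic definition.
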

\begin{proof}
\leavevmode

The axioms of lattices are a finite set of identities, and the set of
them is self-dual.
\end{proof}

\begin{theorem} \cite{Birk1935}*{sec.~21 Th.~22}
Every sublattice of subgroups of a group is isomorphic with a
sublattice of equivalence relations on a set, and conversely.
Every sublattice of subgroups of a finite group is isomorphic with a
sublattice of equivalence relations on a finite set, and conversely.
\end{theorem}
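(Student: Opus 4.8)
The plan is to establish a correspondence between subgroups and equivalence relations via cosets. Given a group $G$ and a subgroup $H \leq G$, I would associate to $H$ the equivalence relation $E_H$ on the underlying set $G$ whose classes are the left cosets $gH$; equivalently, $u \mathrel{E_H} v$ iff $u^{-1}v \in H$. Conversely, given an equivalence relation $E$ that arises this way (one whose class of the identity is a subgroup and which is left-translation-invariant), I would recover the subgroup as the equivalence class of the identity element. The first step is therefore to verify that $H \mapsto E_H$ is a well-defined injection from subgroups into equivalence relations on $G$.

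\emph{Lattice-homomorphism step.} Next I would check that this map preserves the lattice operations, so that the image is a sublattice isomorphic to the subgroup lattice. For meet this is immediate: $E_{H \cap K}$ is the relation $u^{-1}v \in H \cap K$, which is exactly $E_H \cap E_K$, so meets are preserved. The join is the delicate point. In the lattice of subgroups, $H \vee K$ is the subgroup generated by $H \cup K$, while in the lattice of equivalence relations, $E_H \vee E_K$ is the transitive closure of $E_H \cup E_K$. I would show these correspond: $u^{-1}v$ lies in the subgroup generated by $H \cup K$ precisely when $u$ and $v$ are connected by a finite chain of steps, each step lying in a single $H$-coset or a single $K$-coset, which is exactly the transitive closure condition. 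Since we are only claiming that the subgroup lattice embeds as \emph{some} sublattice of equivalence relations (not that the image is closed under the full equivalence-lattice operations), it suffices that the image inherits the induced order and that the induced meet and join coincide with $E_{H\cap K}$ and $E_{H \vee K}$.

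\emph{Converse step.} For the reverse inclusion I would invoke the fact that any sublattice of equivalence relations on a set $U$ embeds into a subgroup lattice. The standard route is to realize each equivalence relation as a subgroup: given a sublattice $\mathcal{L}$ of equivalence relations on $U$, one constructs a group (for instance a suitable permutation group or a free-product-with-amalgamation construction on $U$) so that the chosen equivalence relations correspond to a sublattice of its subgroups with meets and joins matching. Because the excerpt allows me to quote Birkhoff's original Theorem~22, I can simply cite that the two constructions are mutually inverse up to isomorphism; the conceptual content is the coset/partition dictionary described above.

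\emph{Finiteness step.} Finally, both constructions are size-controlled: if $G$ is finite then the base set $U = G$ is finite, and conversely the group built from a finite set $U$ in the standard construction is finite (a finite permutation group on $U$ suffices since each equivalence relation, being left-invariant under the relevant permutations, is captured inside $\mathrm{Sym}(U)$, which is finite). So the finite-to-finite refinement follows by tracking cardinalities through the same maps. The main obstacle I expect is the join step: verifying that the subgroup generated by $H \cup K$ corresponds exactly to the transitive closure of $E_H \cup E_K$ requires care, since a general element of $H \vee K$ is an alternating product $h_1 k_1 h_2 k_2 \cdots$, and one must match each such word to a connecting chain between $u$ and $v$ in the merged partition.
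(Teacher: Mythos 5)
Your forward direction is sound and essentially complete: the coset dictionary $H \mapsto E_H$, with $u \mathrel{E_H} v \Leftrightarrow u^{-1}v \in H$, preserves meets on the nose, and your word-versus-chain argument for joins is exactly right (an element of $\langle H \cup K\rangle$ is a product $a_1\cdots a_n$ with each $a_i \in H \cup K$, and the partial products $u a_1\cdots a_i$ form the connecting chain); no normality is needed, and the finite case is automatic because the base set is $G$ itself. For comparison: the paper does not prove this statement at all --- it is a survey entry, quoted verbatim from Birkhoff's 1935 paper (sec.~21, Th.~22), so the paper's entire ``proof'' is the citation.

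The genuine gap is your converse. You never construct the group: you gesture at ``a suitable permutation group or a free-product-with-amalgamation construction'' and then say you will ``simply cite Birkhoff's original Theorem~22.'' That is circular --- Theorem~22 \emph{is} the statement being proved, so it cannot be invoked for its own hard direction; and the two constructions are not ``mutually inverse'' in any literal sense (they are embeddings in opposite directions, not a bijective correspondence). The missing argument is the standard one: given a sublattice of equivalence relations on $U$, send each relation $E$ to the subgroup $G_E \leq \mathrm{Sym}(U)$ of \emph{finitary} permutations $\pi$ (moving only finitely many points) such that $\pi(u) \mathrel{E} u$ for all $u$. Then $G_{E \wedge F} = G_E \cap G_F$ is immediate; injectivity follows because a pair $(u,v)$ separating $E$ from $F$ yields a transposition in one subgroup but not the other; and $G_{E \vee F} = \langle G_E, G_F\rangle$ holds because a finitary permutation fixing each $(E\vee F)$-class setwise factors into transpositions of $(E \vee F)$-equivalent points, and each such transposition factors, along a chain $u = w_0, w_1, \ldots, w_n = v$ with consecutive terms $E$- or $F$-related, into transpositions lying in $G_E \cup G_F$. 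The finitary restriction is what makes this join computation work for infinite $U$; when $U$ is finite every permutation is finitary and $\mathrm{Sym}(U)$ is a finite group, which also repairs your finiteness step --- your stated justification there (``each equivalence relation, being left-invariant under the relevant permutations, is captured inside $\mathrm{Sym}(U)$'') does not parse as an argument.
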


\begin{theorem} \cite{Whit1946}*{sec.~5 Th.~1}\cite{Pal2001}*{sec.~2 Th.~2.1}
Any lattice is isomorphic to a sublattice of the lattice
of all equivalence relations on some set and thus to
a sublattice of subgroups of a group.
\end{theorem}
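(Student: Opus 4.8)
The second assertion is immediate from the preceding theorem. By the ``conversely'' half of that result, every sublattice of the lattice of equivalence relations on a set is isomorphic to a sublattice of subgroups of a group; so once the first assertion is established, composing the two embeddings yields the second. The work is therefore concentrated in Whitman's theorem: the embedding of an arbitrary lattice $L$ into the lattice $\mathrm{Eq}(U)$ of all equivalence relations on a suitable set $U$. Here one must keep in mind that in $\mathrm{Eq}(U)$ the meet of two relations is their intersection, whereas their join is the \emph{transitive closure} of their union; it is the transitive closure that causes all the trouble.

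My plan is to realize each lattice element as a connectivity relation on a ``coloured'' point set. One constructs $U$ together with, for every $a \in L$, a family of generating pairs (edges of colour $a$), and sets $\theta(a)$ to be the equivalence relation generated by the colour-$a$ edges, so that $(u,v) \in \theta(a)$ precisely when $u$ and $v$ are joined by a finite colour-$a$ path. The set $U$ is taken large (in general infinite), with enough auxiliary points that every join-connection that ought to hold can be witnessed by an explicit path while the remaining structure stays ``spread out''. Arranging the edges so that $a \le b$ forces the colour-$a$ edges to lie among the colour-$b$ edges makes $\theta$ order-preserving, and this monotonicity already supplies the easy halves of the homomorphism property for free: from $\theta(a),\theta(b) \subseteq \theta(a \vee b)$ and the fact that $\theta(a) \vee \theta(b)$ is the \emph{least} equivalence relation above both, we get $\theta(a) \vee \theta(b) \subseteq \theta(a \vee b)$, and dually $\theta(a \wedge b) \subseteq \theta(a) \cap \theta(b)$.

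The content of the proof lies in the two reverse inclusions together with injectivity, and each reduces to a statement about when two points are connected. For meets one needs $\theta(a) \cap \theta(b) \subseteq \theta(a \wedge b)$, i.e. that a pair joined both by a colour-$a$ path and by a colour-$b$ path is already joined by a colour-$(a \wedge b)$ path; for injectivity one needs that incomparable elements never yield the same connectivity. The decisive point, however, is join-preservation, $\theta(a \vee b) \subseteq \theta(a) \vee \theta(b)$: every pair forced together by $a \vee b$ must be reconnectable by a single path that alternates colour-$a$ and colour-$b$ steps. I expect this to be the main obstacle, precisely because of the transitive closure. The standard device is a normalization (``straightening'') argument carried out by induction on path length: one shows that an alternating $a$/$b$ path can be rewritten, without lengthening, using the lattice axioms---commutativity, associativity, and above all absorption---until its endpoints are visibly related in the intended way, so that the connectivity realized by the two colours matches $\theta(a \vee b)$ exactly and no spurious identifications are introduced. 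Once this lemma is in hand, verifying that $\theta$ is an injective lattice homomorphism into $\mathrm{Eq}(U)$ is routine, and the group statement follows as above.
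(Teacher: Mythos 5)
Your handling of the second assertion is exactly right and matches the paper's intent: the paper proves nothing here itself, and the ``thus'' is meant to follow from the immediately preceding theorem (Birkhoff's Theorem~22, equivalence lattices $\leftrightarrow$ subgroup lattices), whose ``conversely'' half you invoke correctly. Note, though, that for the first and main assertion the paper also gives no proof at all --- it is a survey entry citing Whitman (1946) and P\'alfy (2001) --- so the real question is whether your proposal stands on its own as a proof of Whitman's theorem. It does not: what you have written is a plan, not an argument. The set $U$ is never constructed, the colour-$a$ edge families are never defined, and every statement about them is conditional (``one constructs $U$ \ldots with enough auxiliary points that every join-connection that ought to hold can be witnessed''). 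The entire content of Whitman's theorem lives precisely in exhibiting such a $U$ and verifying the two hard inclusions, and none of that is carried out.

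Two more specific problems. First, your proposed key lemma --- a ``straightening'' of alternating paths by induction on length, driven by ``commutativity, associativity, and above all absorption'' --- is not a workable device: the lattice axioms rewrite \emph{terms}, not paths in a point set, and no absorption-style manipulation of a path establishes $\theta(a\vee b)\subseteq\theta(a)\vee\theta(b)$. Second, you have the location of the difficulty inverted. In the standard constructions (Whitman's original one, or the iterative one where witness points are adjoined stage by stage and one takes a union over $\omega$ steps), the join inclusion is arranged \emph{by fiat}: whenever $(u,v)\in\theta(a\vee b)$ is required, one simply adds new points forming an explicit $a$/$b$-alternating path from $u$ to $v$. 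The genuinely delicate part is the opposite one --- showing that all these adjoined points never create spurious connections, i.e.\ that $\theta(a)\cap\theta(b)\subseteq\theta(a\wedge b)$ and that $\theta$ stays injective after infinitely many extension steps. Your sketch treats that direction as ``routine,'' which is where the proof would actually collapse. To repair the proposal you would need to give the explicit construction (base set, the inductive adjunction of witness paths, the limit) and then prove meet-preservation and faithfulness by an induction over the stages of the construction.
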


\begin{theorem} \label{th:finite-lattice-finite-part}
\cite{PudTum1980}*{final Th.}\cite{Pal2001}*{sec.~2 Th.~2.2}
Every finite lattice is isomorphic to
a sublattice of the equivalence relations of a finite set and also to
a subgroup lattice of some finite group.
\end{theorem}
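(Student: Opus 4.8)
The plan is to prove the two assertions of Theorem~\ref{th:finite-lattice-finite-part} by combining the universal embedding result (Theorem quoting \cite{PudTum1980}) with the Birkhoff correspondence between equivalence-relation lattices and subgroup lattices. Both claims are stated earlier in the excerpt in forms I may invoke directly: the theorem from \cite{PudTum1980}*{final Th.} guarantees, for every finite lattice $L$, a positive integer $n_0$ such that for all $n \geq n_0$ there is a normal embedding $\phi\colon L \to \textnormal{Eq}(A)$ with $|A| = n$; and the finite half of Theorem~\ref{th:birk-variety}'s predecessor (the restatement of \cite{Birk1935}*{sec.~21 Th.~22}) asserts that every sublattice of equivalence relations on a \emph{finite} set is isomorphic to a sublattice of subgroups of a \emph{finite} group, and conversely.

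**First I would** establish that $L$ embeds in the equivalence-relation lattice of a finite set. Fix the finite lattice $L$ and apply the Pudl\'ak--T\r{u}ma theorem to obtain an integer $n_0$; choosing any $n \geq n_0$ (for instance $n = n_0$) yields a finite set $A$ with $|A| = n$ and an embedding $\phi\colon L \to \textnormal{Eq}(A)$ that preserves $\hat{0}$ and $\hat{1}$. Since $\phi$ is a lattice embedding, its image $\phi(L)$ is a sublattice of $\textnormal{Eq}(A)$ isomorphic to $L$, and $A$ is finite. This settles the first assertion: $L$ is isomorphic to a sublattice of the equivalence relations of a finite set.

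**Next I would** transfer this to subgroups of a finite group. The image $\phi(L)$ is a sublattice of equivalence relations on the finite set $A$, so the finite version of the Birkhoff correspondence applies: every such sublattice is isomorphic to a sublattice of subgroups of some finite group $G$. Composing this isomorphism with $\phi$ gives an isomorphism of $L$ onto a sublattice of the subgroup lattice of $G$, with $G$ finite. This completes the second assertion and hence the theorem.

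**The main obstacle** is not in the logical structure, which is a short two-step composition, but in verifying that the quoted results genuinely supply the \emph{finite} conclusions I need: the argument depends essentially on the Pudl\'ak--T\r{u}ma strengthening of Whitman's theorem, which produces a \emph{finite} representing set (Whitman's original theorem alone would only give an infinite set and thus the weaker statement), and on the fact that the Birkhoff correspondence preserves finiteness in both directions. Provided those two ingredients are cited correctly --- as they are in the excerpt --- the proof is essentially the composition of two known constructions, and no further calculation is required.
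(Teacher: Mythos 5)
Your proof is correct, but note that it supplies a derivation where the paper gives none: Theorem~\ref{th:finite-lattice-finite-part} is stated in the survey purely as a combination of citations, with the partition-lattice assertion being exactly the final theorem of \cite{PudTum1980} and the subgroup-lattice assertion being exactly \cite{Pal2001}*{sec.~2 Th.~2.2} (which P\'alfy himself attributes to Pudl\'ak--T\r{u}ma). You instead cite only the first of these and then obtain the subgroup-lattice statement as a consequence, by passing the image $\phi(L) \subseteq \textnormal{Eq}(A)$ through the finite version of Birkhoff's equivalence-lattice/subgroup-lattice correspondence, which the paper also records --- namely the unlabeled theorem restating \cite{Birk1935}*{sec.~21 Th.~22} in the ``General lattices'' subsection; be aware that this theorem is not a ``predecessor'' of Theorem~\ref{th:birk-variety}, which is the Variety Theorem, so your cross-reference is slightly off even though the content you invoke is the right one. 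Mathematically your route is legitimate and arguably more informative: it exhibits the second assertion as a consequence of the first rather than as an independent black box, and it reflects how the subgroup-lattice form is actually obtained in the literature. What the paper's bare citation buys is brevity; what your composition buys is an explicit reduction, at the cost of depending on the finiteness-preserving direction of Birkhoff's correspondence (every sublattice of equivalence relations on a finite set embeds in the subgroup lattice of a finite group), which fortunately the paper states verbatim, so no gap remains.
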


\begin{definition}
For any abstract algebra $\boldsymbol{A}$,
$\textnormal{Con } \boldsymbol{A}$ is the lattice of congruences
of $\boldsymbol{A}$.
\end{definition}

\begin{theorem} \cite{Pal2001}*{sec.~3 Th.~3.3}
For every algebraic
lattice $L$ there exists an algebra $\boldsymbol{A}$ such that
$\textnormal{Con } \boldsymbol{A}$ is isomorphic to $L$.
\end{theorem}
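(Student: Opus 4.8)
The statement is the Gr\"atzer--Schmidt congruence-lattice representation theorem, and the plan is to realize $L$ as $\operatorname{Con}\boldsymbol A$ by first reducing to its compact elements. Recall that an algebraic lattice is a complete lattice in which every element is the join of the compact elements beneath it. The strategy is to pass from $L$ to the join-semilattice $S$ of its compact elements and to build an algebra whose \emph{compact} congruences reproduce $S$; since both $\operatorname{Con}\boldsymbol A$ and $L$ are then determined by their compact parts, an isomorphism of semilattices will lift to an isomorphism of lattices.

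First I would verify the reduction. The compact elements of $L$ are closed under finite joins and contain the least element $\hatzero$, so they form a join-semilattice $S = (S, \vee, \hatzero)$. Because $L$ is algebraic, the map $x \mapsto \{\, c \in S : c \le x \,\}$ is an isomorphism from $L$ onto the lattice $\operatorname{Id}(S)$ of ideals of $S$, with inverse $I \mapsto \bigvee I$. Hence it suffices to construct an algebra $\boldsymbol A$ with $\operatorname{Con}\boldsymbol A \cong \operatorname{Id}(S)$; equivalently, an algebra whose join-semilattice of compact congruences is isomorphic to $S$, since the congruence lattice of any algebra is itself algebraic and so is recovered as the ideal lattice of its compact congruences.

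Next I would construct $\boldsymbol A$. The goal is to attach to each $s \in S$ a compact congruence $\theta_s$ so that $s \mapsto \theta_s$ is an injection satisfying $\theta_{\hatzero} = \Delta$ (the identity relation), $\theta_{s \vee t} = \theta_s \vee \theta_t$, and hitting \emph{every} compact congruence. One takes a base set carrying, for each nonzero $s$, a pair of points whose collapse generates $\theta_s$, and then introduces operations --- one can arrange these to be unary --- that encode the multiplication table of $S$: each join relation $s \vee t = u$ in $S$ is forced by an operation linking the corresponding generating pairs, so that collapsing the pairs for $s$ and $t$ automatically collapses the pair for $u$, while no operation creates an identification not demanded by the semilattice. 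The compact congruences of $\boldsymbol A$ are then exactly the finite joins of the $\theta_s$, which by design form a copy of $S$.

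The main obstacle is the control of congruence generation: one must show that the operations force \emph{exactly} the intended collapses and nothing more. Congruence generation is the closure of a set of pairs under reflexivity, symmetry, transitivity and substitution by the operations, and the danger is an unforeseen chain of substitutions collapsing pairs that should stay apart, which would make $s \mapsto \theta_s$ fail to be injective or to be a semilattice embedding. The technical heart of the argument is therefore a ``no unexpected collapse'' lemma, describing the principal and compact congruences of $\boldsymbol A$ explicitly and verifying that $\theta_s \le \theta_t$ holds precisely when $s \le t$; granting this, the semilattice isomorphism between $S$ and the $\{\text{compact congruences}\}$ is immediate, and passing to ideal lattices yields $L \cong \operatorname{Id}(S) \cong \operatorname{Con}\boldsymbol A$.
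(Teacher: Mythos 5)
The paper itself offers no proof to compare against: this theorem appears there purely as a survey citation (of P\'alfy's survey, which in turn cites the Gr\"atzer--Schmidt theorem of 1963). So your proposal has to stand on its own. Its first half does: the reduction is correct and standard. The compact elements of $L$ form a join-semilattice $S$ with least element, the map $x \mapsto \{c \in S : c \le x\}$ is an isomorphism $L \cong \textnormal{Id}(S)$ because $L$ is algebraic, and since $\textnormal{Con}\,\boldsymbol{A}$ is always algebraic, it suffices to produce an algebra whose join-semilattice of compact congruences is isomorphic to $S$.

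Everything after that, however, is a description of what a construction ought to accomplish, not a construction, and the genuine gap sits exactly where you write ``granting this.'' The ``no unexpected collapse'' lemma is not a routine verification to be granted: it \emph{is} the Gr\"atzer--Schmidt theorem, and the naive scheme you sketch does not deliver it. Concretely, two things go wrong. First, congruence joins are computed by transitivity chains running through whatever auxiliary elements and operations ``link'' the designated pairs, and such chains generically identify points of pairs $\theta_r$ with $r \not\le s \vee t$, destroying both injectivity and the join-preservation you need. Second, the base set has far more two-element subsets than designated pairs, and every one of them generates a principal --- hence compact --- congruence; so it is not enough that the $\theta_s$ behave well among themselves, you must also prove that every cross-pair principal congruence is a finite join of the $\theta_s$, or else the compact congruences will not be ``exactly the finite joins of the $\theta_s$'' as you assert. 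Overcoming these two obstructions simultaneously, for an arbitrary (possibly infinite) join-semilattice $S$, is what occupies essentially the whole of the original proof and its later reworkings such as Pudl\'ak's; your side claim that the operations ``can be arranged to be unary'' is likewise asserted without support. That the construction is nowhere near as malleable as the sketch suggests is underlined by Question~\ref{q:finite-algebra} of this survey: whether a finite $L$ can be realized as $\textnormal{Con}\,\boldsymbol{A}$ for a \emph{finite} algebra $\boldsymbol{A}$ is still open.
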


\begin{question} \label{q:finite-algebra} \cite{Pal2001}*{sec.~3 Prob.~3.4}
It is unknown whether for every finite lattice $L$ there exists a
finite algebra $\boldsymbol{A}$ such that
$\textnormal{Con } \boldsymbol{A} \cong L$.
Is this true?
\end{question}

\begin{definition}
For two equivalence relations $x, y$ on a set $U$, we say $x \leq y$ if
$x \subset y$ considering $x, y$ as sets of pairs of elements of $U$,
or equivalently if $a \equiv_x b$ implies $a \equiv_y b$ for all $a,
b \in U$.
Parallelly, for two partitions $x, y$ of a set $U$, we say $x \leq y$
if if $x$ is finer than $y$, that is, if every block of $x$ is
contained in a block of $y$.

Thus the minimum element in the lattice of equivalence relations of a
set is the discrete or
identity equivalence relation, and the maximum element is the
indiscreet or universal equivalence relation.  The meet of $x$ and $y$
is composed of the intersection of the equivalence classes of $x$ and
$y$, and the join of $x$ and $y$ is composed of the transitive closures of the
collective elements of $x$ and $y$.
\end{definition}

\begin{definition} \label{def:type} \cite{Jons1953b}*{sec.~1}
By a \emph{representation} of a lattice $A$ we mean an
ordered pair $\langle F, U \rangle$ such that $U$ is a set and $F$ is a function
which maps $A$ isomorphically onto a sublattice of the lattice of all
equivalence relations over $U$ or equivalently a sublattice of the
lattice of partitions of $U$. We say that $\langle F,U \rangle$ is
\begin{enumerate}
\item of type 1 if $F(x)+F(y) = F(x) \circ F(y)$ for $x,y \in A$,
\item of type 2 if $F(x)+F(y) = F(x) \circ F(y) \circ F(x)$ for $x,y \in A$,
\item of type 3 if $F(x)+F(y) = F(x) \circ F(y) \circ F(x) \circ F(y)$ for
$x,y \in A$.
\end{enumerate}
where $\circ$ is simple composition of relations.
\end{definition}

\begin{theorem} \label{th:all-type-3} \cite{Jons1953b}*{sec.~4 Th.~4.3}
Every lattice has a representation of type 3.
\end{theorem}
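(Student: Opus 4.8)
The plan is to produce, for an arbitrary lattice $L$, a set $U$ together with equivalence relations $\theta_a \in \mathrm{Eq}(U)$, one for each $a \in L$, so that the map $F\colon a \mapsto \theta_a$ is an order-embedding of $L$ into $\mathrm{Eq}(U)$ satisfying the meet identity $\theta_{a \wedge b} = \theta_a \cap \theta_b$ and the \emph{type-3 identity}
$$ \theta_{a \vee b} = \theta_a \circ \theta_b \circ \theta_a \circ \theta_b . $$
Since meets in $\mathrm{Eq}(U)$ are always computed as intersections, the meet identity makes $F$ meet-preserving automatically; the type-3 identity then forces $\theta_a \vee \theta_b = \theta_{a \vee b}$ in $\mathrm{Eq}(U)$, so that $F$ is a lattice embedding realizing every join with only four relational factors. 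Thus the whole problem reduces to building such a family $\{\theta_a\}$.

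First I would realize $U$ as a set of paths in an edge-colored graph. Take the edges to be formal symbols attached to ordered pairs of vertices, each edge carrying a color $c \in L$, and declare vertices $u,v$ to be $\theta_a$-related exactly when they are joined by a path all of whose edge-colors are $\le a$. Read off directly, this makes each $\theta_a$ reflexive (the empty path), symmetric (reverse a path), and transitive (concatenate paths), and it makes $F$ order-preserving in both directions once the graph is rich enough to separate incomparable elements. The construction of the graph is governed by a freeness (universal) requirement whose entire purpose is to secure the meet identity $\theta_a \cap \theta_b = \theta_{a \wedge b}$: a pair of vertices joined both by an $\le a$-colored path and by an $\le b$-colored path must already be joined by an $\le (a \wedge b)$-colored path. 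One arranges this by adjoining colored edges and identifications exactly as dictated by the meet operation of $L$, that is, by the evident free lattice action on a set of generating vertices.

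The heart of the argument --- and the step I expect to be the main obstacle --- is the \emph{folding lemma}: if $u$ and $v$ are joined by any path whose colors all lie below $a \vee b$, then they are already joined by a path of length at most four whose colors are, in order, $\le a$, $\le b$, $\le a$, $\le b$. This is precisely what upgrades the trivially available representation (in which a join is realized only as an unbounded transitive closure, as in Whitman's theorem) to a genuine type-3 representation. I would prove it by a rewriting and normalization argument on paths: using the meet identity one first shows that every edge of color $c \le a \vee b$ can be replaced, along a suitably rerouted path, by edges colored $\le a$ or $\le b$; then one shows that any alternating word in these two colors collapses to one of length four by repeatedly absorbing consecutive same-color segments and eliminating backtracking. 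Controlling the bookkeeping so that the collapse never exceeds four alternations --- rather than merely terminating at some finite length --- is exactly where the lattice identities are pushed to their limit, and is the delicate part of the proof.

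Finally I would assemble the pieces: the meet identity yields that $F$ is an injective meet-homomorphism, the folding lemma yields the type-3 identity and hence that $F$ is join-preserving, and together these show that $\langle F, U \rangle$ is a representation of type~3. It is worth contrasting this with Whitman's theorem, which already embeds any lattice into some $\mathrm{Eq}(U)$ but realizes joins only as arbitrarily long compositions; the content here is the \emph{uniform} bound of four. That bound is best possible for the class of all lattices, since (as recorded in Section~\ref{sec:by-work}) a type-2 representation already forces modularity and a type-1 representation forces the Arguesian law, whereas non-modular lattices exist; hence no uniformly shorter representation is available in general.
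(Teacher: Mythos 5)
The paper itself contains no proof of this statement---it is a survey entry citing J\'onsson's Theorem~4.3---so your proposal has to be judged against J\'onsson's construction, and its central step, the ``folding lemma,'' is exactly backwards. In a graph built to satisfy a freeness/universal requirement, the relation ``joined by a path whose colors are $\le a \vee b$'' is a genuine transitive closure, realized by alternating paths of unbounded length, and freeness guarantees precisely that no shortcuts exist: a six-edge path alternating colors $\le a$, $\le b$ in a freely constructed graph is not related by any shorter composition, and no rewriting, absorption of same-color segments, or elimination of backtracking can produce a four-edge path, because the intermediate vertices such a path would need simply are not present in the structure. The same obstruction appears one step earlier in your sketch: an edge colored $c \le a \vee b$ with $c \not\le a$ and $c \not\le b$ cannot be ``rerouted'' through $\le a$- and $\le b$-colored edges in a free structure. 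The lattice identities cannot conjure these missing points; only the construction can supply them.

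J\'onsson's proof goes the opposite way: rather than proving that long paths collapse, one forcibly adjoins the short ones. Whenever a pair $(u,v)$ must lie in $F(a \vee b)$, one adds three fresh points $p, q, r$ and declares $u\,F(a)\,p$, $p\,F(b)\,q$, $q\,F(a)\,r$, $r\,F(b)\,v$. Each such extension creates new pairs in various relations, so the process is iterated through countably many stages and the union is taken at the end; the type-3 identity $F(x)+F(y)=F(x)\circ F(y)\circ F(x)\circ F(y)$ then holds by construction, not by a normalization bound. The genuine work---which your proposal never confronts, since it assigns the difficulty to a lemma that is false in your setting---is verifying that each witness-adjunction step preserves injectivity of $F$ and the meet identity $F(x \wedge y)=F(x)\cap F(y)$, i.e., that the freshly added points never create unwanted identifications between lattice elements. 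Your closing remark that four factors are best possible (since type~2 forces modularity and type~1 forces the Arguesian law) is correct and consistent with the paper, but it does not repair the main argument.
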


\begin{question} \label{q:finite-type-3}
Given theorems~\ref{th:finite-lattice-finite-part}
and~\ref{th:all-type-3}, is it true that every finite lattice has a
type-3 representation over a finite base set?
\end{question}

\paragraph{Modular}

\begin{theorem} \label{th:modular}
For a lattice $L$, the following conditions are equivalent:
\begin{enumerate}

\item \label{th:modular:i1} \cite{Day1982}*{sec.~1}\cite{Gratz1996}*{Ch.~I sec.~4 Lem.~12}\cite{Pal2001}*{sec.~4}\cite{EnMathMod}
$L$ satisfies
$$ (x \wedge y) \vee (x \wedge z) = x \wedge (y \vee (x \wedge z)) $$

\item \textnormal{[dual to item \ref{th:modular:i1}]} $L$ satisfies
$$ (x \vee y) \wedge (x \vee z) = x \vee (y \wedge (x \vee z)) $$

\item \label{th:modular:i3} \cite{Ded1900}*{sec.~2 par.~VIII
eq.~8}\cite{Birk1967}*{ch.~I sec.~7}\cite{Ore1942}*{Ch.~1 sec.~5 eq.~4}\linebreak[0]\cite{Day1982}*{sec.~1}\linebreak[0]\cite{Lamp1994}\linebreak[0]\cite{Gratz1996}*{Ch.~IV sec.~1 Th.~1}\linebreak[0]\cite{Pal2001}*{sec.~4}\linebreak[0]\cite{EnMathMod}
$L$ satisfies
$$ x \geq z \TNimplies (x \wedge y) \vee z = x \wedge (y \vee z) $$

\item \label{th:modular:i4} \cite{Pal2001}*{sec.~4} $L$ satisfies
$$ x \geq z \TNimplies (x \wedge y) \vee z \geq x \wedge (y \vee z) $$

\item \cite{Birk1967}*{ch.~II sec.~7 Exer.~2}\cite{Gratz1996}*{Ch.~IV sec.~1 Th.~1} $L$ satisfies
$$ x \wedge (y \vee z) = x \wedge ((y \wedge (x \vee z)) \vee z) $$

\item \cite{Gratz1996}*{Ch.~IV sec.~1 Th.~1} $L$ satisfies
$$ x \vee (y \wedge z) = x \vee ((y \vee (x \wedge z)) \wedge z) $$

\item \label{th:modular:i9} \cite{Lamp1994} $L$ satisfies
$$ x \leq y \vee z \TNimplies x \leq y \vee [z \wedge (x \vee y)] $$

\item \textnormal{[dual to item \ref{th:modular:i9}]} $L$ satisfies
$$ x \geq y \wedge z \TNimplies x \geq y \wedge [z \vee (x \wedge y)] $$

\item \label{th:modular:i9a} \cite{Gratz1996}*{Ch.~IV Exer.~1} $L$ satisfies
$$ (x \vee (y \wedge z)) \wedge (y \vee z) =
(x \wedge (y \vee z)) \vee (y \wedge z) $$

\item \label{th:modular:i12} \cite{Haim1985a}*{Exam.~2.3} $L$ satisfies
$$ (b \vee c) \wedge a \leq b \vee (c \wedge (a \vee b)) $$

\item \label{th:modular:i14} \cite{Haim1985a}*{Exam.~2.3} $L$ satisfies
$$ ((b_1 \wedge b_2) \vee c) \wedge a \leq
b_1 \vee (c \wedge (a \vee b_2)) $$

\item \textnormal{[dual to item \ref{th:modular:i14}]} $L$ satisfies
$$ ((b_1 \vee b_2) \wedge c) \vee a \geq
b_1 \wedge (c \vee (a \wedge b_2)) $$

\item \label{th:modular:i16} \cite{Birk1967}*{ch.~II sec.~7 Exer.~2} $L$ satisfies
$$ [(x \wedge z) \vee y] \wedge z = [(y \wedge z) \vee x] \wedge z $$

\item \textnormal{[dual to item \ref{th:modular:i16}]} $L$ satisfies
$$ [(x \vee z) \wedge y] \vee z = [(y \vee z) \wedge x] \vee z $$

\item \cite{Birk1967}*{ch.~II sec.~7 Lem.~1} $L$ satisfies
$$ x \geq y \TNand a \wedge x = a \wedge y \TNand
a \vee x = a \vee y \TNimplies x=y $$

\item \label{th:modular:i17} \cite{Ore1942}*{Ch.~1 sec.~5 eq.~5} $L$ satisfies
$$ b \vee c \geq a \geq b \TNimplies a = b \vee (a \wedge c) $$

\item \textnormal{[dual to item \ref{th:modular:i17}]} $L$ satisfies
$$ b \wedge c \leq a \leq b \TNimplies a = b \wedge (a \vee c) $$

\item \label{th:modular:i18} \cite{Birk1967}*{ch.~I sec.~7 Th.~12}\cite{Gratz1996}*{Ch.~IV sec.~1 Th.~1}\cite{Pal2001}*{sec.~4}
$L$ does not contain the lattice $N_5$ (the ``pentagon
  lattice'') as a sublattice.
\\
\begin{tikzcd}[column sep=0.5em,row sep=1em,every arrow/.append style={dash}]
  & \bullet \ar[ddl] \ar[dr] \\
  && \bullet \ar[dd] \\
   \bullet \ar[ddr] \\
  && \bullet \ar[dl] \\
  & \bullet
\end{tikzcd} \\
\end{enumerate}
\end{theorem}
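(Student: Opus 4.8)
The plan is to take the classical modular law, item~\ref{th:modular:i3},
$$ x \geq z \TNimplies (x \wedge y) \vee z = x \wedge (y \vee z), $$
as the hub and prove every other item equivalent to it. Two preliminary observations organize the bookkeeping. First, in \emph{any} lattice $(x \wedge y) \vee z \leq x \wedge (y \vee z)$ holds whenever $z \leq x$, since $x \wedge y$ and $z$ each lie below both $x$ and $y \vee z$; thus the equation of item~\ref{th:modular:i3} holds precisely when the reverse inequality asserted in item~\ref{th:modular:i4} does, giving \ref{th:modular:i3}~$\Leftrightarrow$~\ref{th:modular:i4} at once. Second, the modular law is self-dual: interchanging $\wedge$ with $\vee$, reversing $\leq$, and relabeling $x \leftrightarrow z$ returns item~\ref{th:modular:i3} to itself. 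Hence the class of modular lattices is self-dual, and for each explicitly dual pair in the list it suffices to prove one member equivalent to the hub, the partner following by dualization.

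Most of the remaining equivalences reduce to a single device, the substitution-and-restriction trick that passes between the implication in item~\ref{th:modular:i3} and the identities. In one direction one specializes the hub so that its hypothesis becomes automatic: replacing $z$ by $x \wedge z$ forces $z \leq x$ and turns item~\ref{th:modular:i3} into item~\ref{th:modular:i1}. In the other direction the identity, read in the case where $z \leq x$ already holds so that $x \wedge z = z$, collapses back to the hub. The same pattern, with the appropriate substitution and a short absorption computation in each instance, yields \ref{th:modular:i3}~$\Leftrightarrow$~\ref{th:modular:i9}, and likewise for items \ref{th:modular:i9a}, \ref{th:modular:i12}, \ref{th:modular:i14}, \ref{th:modular:i16}, the shearing identities, and the Ore-type condition item~\ref{th:modular:i17}; for the last, the forward direction applies the hub with $(x,y,z) \mapsto (a,c,b)$ and uses the extra hypothesis $a \leq b \vee c$ to drop the meet, while the reverse sets $a = x \wedge (y \vee z)$, $b = z$, $c = y$. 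I expect these to be routine once the correct substitution is identified in each case.

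The genuinely substantive step, and the one I expect to be the main obstacle, is the equivalence of the hub with the pentagon-exclusion item~\ref{th:modular:i18}. That a modular lattice contains no $N_5$ is easy: one checks directly that $N_5$ violates item~\ref{th:modular:i3}, and modularity is inherited by sublattices. The converse, that failure of modularity produces an $N_5$ sublattice, is Birkhoff's construction and is where the care lies. Assuming item~\ref{th:modular:i3} fails, fix $z \leq x$ with $(x \wedge y) \vee z < x \wedge (y \vee z)$, the inequality being strict by the first preliminary observation, and set
$$ o = x \wedge y, \quad \iota = y \vee z, \quad n = y, \quad m_1 = (x \wedge y) \vee z, \quad m_2 = x \wedge (y \vee z). $$
A short computation gives $m_1 \vee n = m_2 \vee n = \iota$ and $m_1 \wedge n = m_2 \wedge n = o$, while $m_1 < m_2$ by hypothesis; these relations force the five elements to be distinct and to close into a sublattice isomorphic to $N_5$. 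Verifying that closure and distinctness cleanly, while disposing of the degenerate collapses, is the delicate part of the argument.

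Finally, the cancellation condition — $x \geq y$ together with $a \wedge x = a \wedge y$ and $a \vee x = a \vee y$ implying $x = y$ — is slotted in so as to exploit the pentagon. Its forward direction from the hub is a direct computation in the spirit of Birkhoff's Lemma~1: from $x = x \wedge (a \vee x) = x \wedge (a \vee y)$ one applies modularity (using $y \leq x$) to get $x = (x \wedge a) \vee y = (a \wedge y) \vee y = y$. Its reverse direction is cleanest through item~\ref{th:modular:i18}: in the $N_5$ just constructed the choice $a = n$, $x = m_2$, $y = m_1$ witnesses a failure of cancellation, so non-modularity yields an $N_5$ and hence refutes cancellation. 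This routing lets each ``negative'' condition borrow its hard direction from the pentagon theorem, closing the cycle of equivalences.
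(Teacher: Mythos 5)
Your plan is correct, but note that it supplies something the paper itself never provides: since this is a survey, the paper gives no proof of the theorem at all --- each item simply carries citations to the works (Birkhoff, Gr\"atzer, Day, Ore, Haiman, P\'alfy, the Encyclopedia) in which that formulation is shown equivalent to modularity, and the theorem is presented as a synthesis of those sources. Your hub-and-spoke argument is, in effect, a self-contained reconstruction of what those sources do. The pieces all check out: item~\ref{th:modular:i4} comes free from the one-sided inequality $(x\wedge y)\vee z\le x\wedge(y\vee z)$ (valid when $z\le x$); self-duality of the hub legitimately halves the list; the substitution $z\mapsto x\wedge z$ and its relatives do pass between the implication and the identity forms (your substitutions for items \ref{th:modular:i1}, \ref{th:modular:i9}, \ref{th:modular:i12}, \ref{th:modular:i14}, \ref{th:modular:i16}, and \ref{th:modular:i17} are the right ones, and item~\ref{th:modular:i9a} also yields modularity directly, via $x\mapsto v$, $y\mapsto w$, $z\mapsto u$ with $u\le w$, which turns it into $(v\vee u)\wedge w=(v\wedge w)\vee u$); and your pentagon construction $o=x\wedge y$, $n=y$, $m_1=(x\wedge y)\vee z$, $m_2=x\wedge(y\vee z)$, $\iota=y\vee z$ is the standard Dedekind--Birkhoff one. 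The one place needing genuine care is the one you flag: distinctness and closure of the $N_5$, where each degenerate collapse ($o=m_1$, $m_2=\iota$, $n\in\{o,\iota\}$, or $n$ comparable to $m_1$ or $m_2$) must be shown to force $m_1=m_2$ against hypothesis; those checks all go through. What your route buys is a uniform, self-contained verification --- one substitution trick plus one hard lemma, with the ``negative'' conditions (cancellation, pentagon-exclusion) borrowing their hard direction from the $N_5$ construction; what the paper's citation-based presentation buys is provenance, recording which source is responsible for each of the many formulations, which is the point of a survey.
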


\begin{definition}
A lattice satisfying any of the conditions of theorem~\ref{th:modular} is
called \emph{modular}.\footnote{\cite{Ore1942} uses \emph{Dedekind
  condition, law,} and \emph{relation} for ``modular'' and
  \emph{structure} for ``lattice''.}
\end{definition}

\begin{theorem}
The class of modular latices is defined by a finite set of identities
and is self-dual.  There are finite lattices that are not modular.
\end{theorem}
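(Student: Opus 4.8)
The plan is to establish the three assertions separately, drawing entirely on the equivalences already collected in Theorem~\ref{th:modular} and on the earlier fact that the lattice axioms themselves form a finite, self-dual set of identities. No new combinatorial content is needed; the substantive work has already been done in assembling Theorem~\ref{th:modular}.

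For the first assertion I would observe that item~\ref{th:modular:i1} of Theorem~\ref{th:modular}, namely $(x \wedge y) \vee (x \wedge z) = x \wedge (y \vee (x \wedge z))$, is a genuine identity (an equation, with no hypotheses), and that by Theorem~\ref{th:modular} it characterizes modularity among lattices. Hence the modular lattices are exactly the lattices satisfying the finitely many lattice axioms together with this single extra equation, which is a finite set of identities in total.

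For self-duality, the cleanest route is to exhibit a self-dual defining set of identities. The lattice axioms are already self-dual, and the dual of item~\ref{th:modular:i1} is precisely the second listed condition $(x \vee y) \wedge (x \vee z) = x \vee (y \wedge (x \vee z))$; since that condition is listed in Theorem~\ref{th:modular} as equivalent to modularity, the dual set of identities defines the very same class, so the dual class coincides with the class. Equivalently, one may simply note that the form in item~\ref{th:modular:i3}, namely $x \geq z \TNimplies (x \wedge y) \vee z = x \wedge (y \vee z)$, returns to itself after interchanging $\wedge$ with $\vee$ and reversing $\leq$ (followed by a relabeling of variables, taking the smaller element to the smaller), so it is literally a self-dual condition.

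For the final assertion I would invoke item~\ref{th:modular:i18}: a lattice is modular iff it does not contain the pentagon $N_5$ as a sublattice. Since $N_5$ is a finite lattice that trivially contains itself as a sublattice, $N_5$ is a finite non-modular lattice, establishing existence. I do not anticipate a genuine obstacle; the only step requiring care is the self-duality bookkeeping, that is, checking that the dual of the chosen defining identity again characterizes modularity (or that item~\ref{th:modular:i3} is genuinely self-dual under relabeling), and even this is routine once the equivalences of Theorem~\ref{th:modular} are available.
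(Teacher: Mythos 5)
Your proposal is correct and follows essentially the same route as the paper: take the lattice axioms plus one identity from Theorem~\ref{th:modular} to get a finite equational axiomatization, settle self-duality via the duality relations among that theorem's items, and use the pentagon $N_5$ from item~\ref{th:modular:i18} as the finite non-modular example. The only cosmetic difference is that the paper secures self-duality by choosing the single self-dual identity of item~\ref{th:modular:i9a}, whereas you choose item~\ref{th:modular:i1} and observe that its dual is also listed as equivalent to modularity (or, alternatively, that item~\ref{th:modular:i3} is self-dual up to relabeling); both are immediate applications of the same equivalences.
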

\begin{proof}
\leavevmode

The set of axioms of lattices, together with any one of the items of
theorem~\ref{th:modular} that is an identity, is a finite set of
identities characterizing modular lattices.
Choosing item~\ref{th:modular:i9a} as the modular identity makes the
set self-dual.
The lattice described in item~\ref{th:modular:i18}, $N_5$, is a finite
lattice that is not modular.
\end{proof}

\begin{theorem}
If $L$ is a finitary lattice, the following conditions are equivalent:
\begin{enumerate}
\item $L$ is modular
\item \cite{Birk1967}*{ch.~II sec.~8 Th.~16}
If $a \neq b$ both cover $c$, then there exists $d \in L$ which covers
both $a$ and $b$, and dually, 
if $a \neq b$ both are covered by $c$, then there exists $d \in L$
which is covered by both $a$ and $b$.
\item \cite{Birk1967}*{ch.~II sec.~8 Ths.~15 and 16}\cite{EnMathMod}
$L$ is graded with rank function $\rho$\footnote{Thus
satisfying the the Jordan--Dedekind chain condition: all maximal
chains between the same endpoints have the same finite length.}
and $\rho(x) + \rho(y) = \rho(x \vee y) + \rho(x \wedge y)$
\item
$x$ and $y$ both cover $x \wedge y$ iff $x$ and $y$ are both
covered by $x \vee y$
\end{enumerate}
\end{theorem}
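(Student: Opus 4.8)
The plan is to reduce all four conditions to Birkhoff's finite-length theorem \cite{Birk1967}*{ch.~II sec.~8 Th.~16} by exploiting the finitary hypothesis. The first step is to record that every finitary lattice has a least element $\hat{0}$: each principal ideal $(x] = \{y : y \leq x\}$ is closed under $\wedge$ and $\vee$ and is finite, hence has a minimum $0_x$, and $0_x$ is a minimal element of $L$ (anything strictly below it would lie in $(x]$); since the meet of two minimal elements equals each of them, the minimal element is unique and is therefore a global $\hat{0}$. Consequently each interval $[\hat{0}, w] = (w]$ is a finite lattice, so it has finite length, and the finite-length theory of Birkhoff applies verbatim to it.

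The second step is a \emph{localization principle}: each of conditions (1), (2), (3) holds in $L$ if and only if it holds in every principal ideal $[\hat{0}, w]$. For (1), any three elements $x, y, z$ lie below $w := x \vee y \vee z$, so a failure of modularity already occurs inside $[\hat{0}, w]$. For (2), covers and binary joins and meets are preserved and reflected by the convex down-set $[\hat{0}, w]$, and whenever $a \neq b$ cover $c$ the semimodular witness is forced to be $a \vee b$ (which lies in any ideal containing $a$ and $b$), so the upper and lower covering conditions transfer in both directions. For (3), the height of $x$ computed in $[\hat{0}, x]$ agrees with that computed in any larger ideal, so the local rank functions patch to a single $\rho$ on $L$, and the rank identity for a given pair $x, y$ is read off inside the finite ideal $[\hat{0}, x \vee y]$. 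Granting localization, the equivalences $(1) \Leftrightarrow (2) \Leftrightarrow (3)$ follow by applying \cite{Birk1967}*{ch.~II sec.~8 Th.~16} (and its corollary to Theorem~15) inside each finite-length ideal and then quantifying over all $w$.

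The third step folds in (4), and here no finitariness is needed: I claim $(2) \Leftrightarrow (4)$ in any lattice. If $a \neq b$ both cover $c$, then $a \wedge b = c$ (an element strictly between would contradict one of the coverings), so the forward half of (4) says exactly that $a \vee b$ covers both $a$ and $b$, which is upper semimodularity with witness $d = a \vee b$; the backward half gives lower semimodularity dually, yielding $(4) \Rightarrow (2)$. Conversely, if $x \neq y$ cover $x \wedge y$, upper semimodularity produces some $d$ covering both, and $x \vee y \leq d$ together with $x \lessdot d$ forces $x \vee y = d$, so $x \vee y$ covers $x$ and $y$; the converse implication in (4) is dual, giving $(2) \Rightarrow (4)$.

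I expect the only genuinely non-routine point to be the globalization inside the localization principle for (3): one must check that the heights defined separately in the various finite ideals are mutually consistent --- because gradedness forces all maximal chains from $\hat{0}$ to $x$ to share the same length --- so that they assemble into a bona fide rank function on the possibly infinite lattice $L$, and that gradedness of $L$ is equivalent to gradedness of all its principal ideals. The remaining work is either a direct appeal to the cited finite-length results or the elementary covering manipulations of the $(2) \Leftrightarrow (4)$ step.
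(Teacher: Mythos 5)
Your proposal is correct, but note that the paper itself contains no proof of this theorem at all: the statement is a survey synthesis resting on citations to Birkhoff's Theorems 15 and 16, which are proved in \cite{Birk1967} only for lattices of \emph{finite length}, and item (4) carries no citation whatsoever. Your argument supplies exactly the bridge the paper leaves implicit. The three pillars all check out: (a) a finitary lattice has a global least element $\hat{0}$ (the minimum of any finite principal ideal is a minimal element of $L$, and taking meets forces the minimal element to be unique), so every principal ideal $[\hat{0},w]$ is a finite lattice to which the finite-length theory applies; (b) the localization principle is sound, because principal ideals preserve and reflect covers, because a semimodularity witness $d$ for $a \neq b$ covering $c$ is forced to equal $a \vee b$ (from $a < a \vee b \leq d$ and $a \lessdot d$) and hence lies in any ideal containing $a$ and $b$, and because gradedness makes the local height functions agree on overlaps (each equals the common length of maximal chains from $\hat{0}$), so they patch to a single rank function on $L$; (c) your purely order-theoretic proof that (2) and (4) are equivalent in an \emph{arbitrary} lattice is correct, the key observation again being that the only possible witnesses are $a \vee b$ and $a \wedge b$, so the covering conditions of (2) collapse to the biconditional (4). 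Comparing the two treatments: the paper's citation-only approach is economical for a survey but strictly speaking incomplete --- Birkhoff's theorems do not literally apply to an infinite finitary lattice, and item (4) is left unjustified --- whereas your localization argument is the natural, gap-free way to make the extension honest, at the modest cost of the bookkeeping needed to verify that ranks and covering witnesses behave well under restriction to principal ideals.
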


\begin{theorem} \label{th:modular-type-2}
\cite{Jons1953b}*{sec.~3 Th.~3.7}\cite{Jons1953b}*{sec.~1 Th.~1.2}
A lattice is modular iff it has a representation of type 2.
\end{theorem}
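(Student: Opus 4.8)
The statement combines two facts: type-2 representability implies modularity, and every modular lattice admits a type-2 representation. Since $F$ is a lattice isomorphism onto its image, $L$ is modular iff the image sublattice $\mathcal{M} = F(L) \subseteq \mathrm{Eq}(U)$ is modular, so throughout I work with the equivalence relations themselves, writing $\wedge$ for intersection and recalling that the join of $R,S \in \mathcal{M}$ equals their join in the ambient equivalence-relation lattice, namely the transitive closure of $R \cup S$.

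The reverse implication is the easy half. In any lattice, $R \le T$ forces $R \vee (S \wedge T) \le (R \vee S) \wedge T$, so only the opposite inequality needs proof. Assume $R \subseteq T$ and take $(a,b) \in (R \vee S) \wedge T$. By the type-2 identity $R \vee S = R \circ S \circ R$, there are $c,d$ with $(a,c),(d,b) \in R$ and $(c,d) \in S$. Since $R \subseteq T$ we get $(a,c),(d,b) \in T$; combined with $(a,b) \in T$ and the symmetry and transitivity of $T$, this yields $(c,d) \in T$, hence $(c,d) \in S \wedge T$. Therefore $(a,b) \in R \circ (S \wedge T) \circ R \subseteq R \vee (S \wedge T)$, establishing $(R \vee S) \wedge T \le R \vee (S \wedge T)$ and with it the modular law (item~\ref{th:modular:i3} of Theorem~\ref{th:modular}).

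The forward implication (every modular lattice has a type-2 representation) is the substantial half and the main obstacle. The plan is to construct an explicit representation $\langle F, U\rangle$, refining the type-3 construction underlying Theorem~\ref{th:all-type-3}, which already represents an arbitrary lattice but only guarantees type~3, i.e. $R \vee S = R \circ S \circ R \circ S \circ \cdots$ with no a priori bound on the number of alternations. Concretely, I would build $U$ from $L$ so that $a \mapsto F(a)$ is injective and preserves meet and join, and then show that modularity forces every alternating $R$/$S$-chain $a = x_0, x_1, x_2, \ldots$ joining two points to be shortenable to a single $R \circ S \circ R$ path. The condition $R \vee S = R \circ S \circ R$ is exactly a termination statement for the increasing chain $R \subseteq R \circ S \subseteq R \circ S \circ R \subseteq \cdots$, and the role of the modular law is to license the local shortening steps that collapse longer alternations; carrying out these reductions while keeping $F$ an embedding is the technical heart of J\'onsson's Theorem~3.7 and the step I expect to be delicate.

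Finally, I would note why type~2, rather than type~1, is the right target: a type-1 representation forces the relations to commute and hence (by the cited results on type-1 representations) makes $L$ Arguesian, whereas modular lattices need not be Arguesian (for instance the subspace lattice of a non-Desarguesian projective plane). Thus one cannot hope to collapse the join all the way to $R \circ S$, and the symmetric three-fold product $R \circ S \circ R$ is the best one can guarantee in general, which explains the exact shape of the type-2 identity.
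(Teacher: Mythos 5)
Your proof of the reverse direction (a type-2 representation forces modularity) is complete and correct: it is essentially J\'onsson's own argument for his Theorem~1.2, and your reduction to intersections and transitive closures is legitimate because $F(L)$ is a sublattice of $\mathrm{Eq}(U)$, so meets and joins in the image agree with those of the ambient lattice of equivalence relations. Note, for comparison, that the paper itself offers no proof at all of this statement --- being a survey, it simply cites J\'onsson's Theorems~1.2 and~3.7 --- so on this half you actually go beyond the source you are being compared against.

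The forward direction, however --- every modular lattice has a type-2 representation --- is left as a plan rather than a proof, and that is the substantial half of the statement; as you yourself concede, it is ``the technical heart of J\'onsson's Theorem~3.7.'' A sketch that stops exactly where the difficulty begins is a genuine gap. Moreover, the sketch points in a slightly wrong direction: within a fixed representation (for instance one produced by the type-3 construction behind Theorem~\ref{th:all-type-3}), modularity of the abstract lattice does \emph{not} allow you to ``shorten'' an alternating $R/S$-chain to a path of the form $R \circ S \circ R$; no such local rewriting is available in general. J\'onsson's actual construction works the other way around: the base set is enlarged by a (transfinite) sequence of extensions, adjoining new witness points $c,d$ with $a\,F(x)\,c$, $c\,F(y)\,d$, $d\,F(x)\,b$ whenever a pair $(a,b) \in F(x \vee y)$ lacks such a path, and modularity is what guarantees that each extension can be made while keeping $F$ a faithful, meet- and join-preserving embedding; the type-2 identity then holds in the direct limit. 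So the role of the modular law is to keep the extension steps consistent, not to collapse chains in an already-given representation. Until that construction (or some substitute for it) is carried out, the theorem remains unproven in your proposal.
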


\begin{question} \label{q:finite-type-2}
Given theorems~\ref{th:finite-lattice-finite-part}
and~\ref{th:modular-type-2}, is it true that every finite modular lattice has a
type-2 representation over a finite base set?
\end{question}

\begin{theorem} \cite{NatPick1987}*{Proof of Lem.~2}
If $L$ is a finite dimensional modular lattice, $L$ is simple iff
all pairs of prime quotients\footnote{That is, all covering pairs
$x \lessdot y$.} in $L$ are projective to each other.
\end{theorem}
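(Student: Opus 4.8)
The plan is to reduce simplicity to a statement about the principal congruences generated by prime quotients, and then to identify, using modularity, exactly which prime quotients each such congruence collapses. For a prime quotient $p\colon x \lessdot y$ write $\theta_p$ for the smallest congruence of $L$ identifying $x$ and $y$. Since $L$ has finite length, every non-identity congruence $\theta$ collapses at least one covering pair, so $\theta \geq \theta_p$ for some $p$; and a congruence that collapses every prime quotient must be the universal congruence, because any interval $[a,b]$ is spanned by a finite maximal chain whose successive quotients it then collapses. Hence $L$ is simple if and only if every $\theta_p$ is universal, i.e.\ if and only if every $\theta_p$ collapses every prime quotient $q$.

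The remaining task is the Key Lemma: in a finite-dimensional modular lattice, $\theta_p$ collapses $q$ if and only if $p$ and $q$ are projective. One direction is easy and uses no modularity. If prime quotients $p = [u\wedge v,\,u]$ and $q = [v,\,u\vee v]$ are perspective and a congruence $\theta$ collapses $p$, then joining $u\wedge v \equiv u$ with $v$ gives $v \equiv u\vee v$, so $\theta$ collapses $q$ as well; dually for a downward perspectivity. Taking the transitive closure, any congruence collapsing $p$ collapses every prime quotient projective to $p$, so in particular $\theta_p$ collapses every such $q$.

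For the converse I would exhibit a congruence $\theta$ collapsing \emph{only} the prime quotients projective to $p$. Let $\Sigma$ be the set of prime quotients projective to $p$, and for $a \leq b$ declare $a \mathrel{\theta} b$ when every quotient in some (equivalently, any) maximal chain of $[a,b]$ lies in $\Sigma$; extend to arbitrary pairs by $x \mathrel{\theta} y \iff (x\wedge y)\mathrel{\theta} x$ and $(x\wedge y)\mathrel{\theta} y$. The independence of the chosen maximal chain is exactly the modular Jordan--H\"older/refinement theorem: any two maximal chains of an interval in a modular lattice of finite length have their prime quotients matched up in projective pairs, so membership of all quotients in the projectivity class $\Sigma$ is chain-independent. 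Granting this, $\theta$ is well defined and collapses $p$, whence $\theta \geq \theta_p$; and by construction $\theta$ collapses a prime quotient $q$ only when $q \in \Sigma$. Thus if $\theta_p$ collapses $q$ then so does $\theta$, forcing $q$ projective to $p$, which is the converse.

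The main obstacle is verifying that the relation $\theta$ just defined really is a congruence --- reflexivity and symmetry are immediate, but transitivity and, above all, the substitution property $a\mathrel{\theta} b \Rightarrow (a\vee c)\mathrel{\theta}(b\vee c)$ and $(a\wedge c)\mathrel{\theta}(b\wedge c)$ require modularity. The crucial modular input is that transposing an interval up or down carries a maximal chain to a maximal chain with projectively matching prime quotients, so that the maps $x \mapsto x\vee c$ and $x \mapsto x\wedge c$ send $\Sigma$-chains to $\Sigma$-chains (some quotients may collapse to trivial ones, which causes no difficulty). Once $\theta$ is shown to be a congruence the Key Lemma follows, and with it the theorem: $L$ is simple $\iff$ every $\theta_p$ collapses every $q$ $\iff$ every prime quotient is projective to every other.
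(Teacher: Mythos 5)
The paper itself contains no proof of this statement: it is a survey item, quoted from the proof of Lemma~2 in Nation and Pickering's paper, where it is invoked as a known classical fact. So there is no in-paper argument to measure yours against; the relevant comparison is with the classical argument, and that is exactly what you have reconstructed. Your reduction is correct: in a finite-length lattice every nontrivial congruence collapses some prime quotient and a congruence collapsing all prime quotients is universal, so simplicity is equivalent to each principal congruence $\theta_p$ collapsing every prime quotient; the easy half of your Key Lemma (collapsing is inherited along perspectivities, hence along projectivities) is right and needs no modularity; and the converse half is the classical construction of the congruence attached to a projectivity class $\Sigma$ via maximal chains, with well-definedness coming from the Jordan--H\"older--Dedekind theorem and the substitution property from the transposition principle, i.e.\ the statement that under $x \mapsto x \vee c$ each cover either collapses or maps to a perspective cover. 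This is precisely the proof that the congruence lattice of a finite-length modular lattice is Boolean with atoms the projectivity classes of prime quotients, of which the stated theorem is the ``one class'' case. The only caveat is that you defer the verification that your relation $\theta$ is in fact a congruence (transitivity and the substitution property); that verification is the genuine technical heart of the argument, but the tools you name for it are the correct ones and the remaining details are routine, so I would count your proposal as a correct outline rather than as having a gap.
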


\begin{theorem} \label{th:mod-proj} \cite{Pal2001}*{sec.~4}
The subspace lattice of a projective space (of any dimension, whether
or not it satisfies Desargues theorem) is modular.
\end{theorem}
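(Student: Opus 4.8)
The plan is to verify the modular law directly on subspaces, exploiting the synthetic description of the join in a projective space rather than any coordinatization. Recall that a projective space is a point--line incidence geometry obeying the usual axioms (any two points lie on a unique line, every line carries at least three points, and the Veblen--Young/Pasch axiom that two coplanar lines meet), that a \emph{subspace} is a set of points closed under joining --- if $p \neq q$ lie in it, so does the whole line $\overline{pq}$ --- and that in the subspace lattice the meet is intersection while the join $U \vee W$ is the smallest subspace containing $U \cup W$. By theorem~\ref{th:modular}, modularity is equivalent to item~\ref{th:modular:i4}, which for subspaces $X, Y, Z$ with $Z \leq X$ is precisely the inclusion
$$ X \wedge (Y \vee Z) \leq (X \wedge Y) \vee Z. $$
So it suffices to establish this one inequality.

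The key geometric ingredient is the standard description of the join of two subspaces,
$$ U \vee W = \{\, p : p \in \overline{uw} \text{ for some } u \in U,\ w \in W \,\}, $$
with the convention that $\overline{uw}$ is the single point $u$ when $u = w$. Granting this, the inequality is immediate. Take a point $p \in X \wedge (Y \vee Z)$; the degenerate cases $p \in Y$ (then $p \in X \wedge Y$) and $p \in Z$ are trivial, so assume $p \in \overline{yz}$ with $y \in Y$, $z \in Z$, and $p, y, z$ distinct. Since $Z \leq X$ we have $z \in X$, and $p \in X$, so the line $\overline{yz} = \overline{pz}$ meets $X$ in the two distinct points $p$ and $z$ and therefore lies entirely in $X$; in particular $y \in X$, so $y \in X \wedge Y$. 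Hence $p \in \overline{yz}$ with $y \in X \wedge Y$ and $z \in Z$, which gives $p \in (X \wedge Y) \vee Z$, as required.

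The argument is entirely dimension-free and coordinate-free, so it applies verbatim to projective spaces of arbitrary, possibly infinite, dimension and makes no appeal to Desargues' theorem --- matching the scope claimed in the statement. The one real obstacle is the join-description lemma itself: proving that the set of points lying on lines joining $U$ to $W$ is already closed under the line operation, hence a subspace. This is exactly the step that consumes the projective (Veblen--Young) axiom, and I would either cite it as a classical fact or prove it by the usual case analysis, showing that the line through two such points meets $U$ and $W$ appropriately by applying Pasch's axiom inside a suitable triangle. Everything after that lemma is the short point-chase above.
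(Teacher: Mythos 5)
Your proof is correct, but there is nothing in the paper to compare it against: the survey states this theorem bare, with only a pointer to \cite{Pal2001}*{sec.~4}, and P\'alfy in turn asserts the modularity of subspace lattices there only in passing, without argument. So your synthetic proof supplies what both texts outsource. What you give is the classical, dimension-free argument, and it is organized correctly: you reduce to the single inequality of theorem~\ref{th:modular}, item~\ref{th:modular:i4} (the opposite inclusion holding in every lattice when $Z \leq X$), and then run a point-chase whose only nontrivial input is the join-description lemma --- that for nonempty subspaces $U$, $W$ every point of $U \vee W$ lies on a line $\overline{uw}$ with $u \in U$, $w \in W$. The chase itself is airtight; note that its final step, $p \in \overline{yz} \subseteq (X \wedge Y) \vee Z$, uses only closure of subspaces under lines, so the hard direction of the lemma is consumed exactly once, and you correctly locate it as the sole place the Veblen--Young/Pasch axiom is spent. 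Deferring that lemma to a citation or a standard case analysis is an acceptable and honestly flagged gap, since the fact is genuinely classical. Two small remarks: your union-of-lines formula fails when $U$ or $W$ is empty (it then yields the empty set rather than the other subspace), but in your application an empty $Y$ or $Z$ collapses into your trivial cases $p \in Y$ or $p \in Z$, so nothing breaks; and what your route buys, beyond the citation the paper settles for, is that the parenthetical scope of the statement --- arbitrary dimension, no Desargues --- is made visible in the proof rather than taken on faith.
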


\paragraph{Arguesian}

\begin{theorem} \label{th:arguesian}
For a lattice $L$, the following conditions are equivalent:
\begin{enumerate}

\item \cite{Jons1953b}*{sec.~2 Lem.~2.1}\cite{Jons1954a}*{sec.~1 Def.~1.8}\cite{DayPick1984}\linebreak[0]\cite{Lamp1994}\linebreak[0]\cite{Gratz1996}*{Ch.~IV sec.~4 Def.~9}
For $a_0, a_1, a_2, b_0, b_1, b_2 \in L$
$$ (a_0 \vee b_0) \wedge (a_1 \vee b_1) \wedge (a_2 \vee b_2) \leq
(a_0 \wedge (y \vee a_1)) \vee (b_0 \wedge (y \vee b_1)) $$
where
$$ y = (a_0 \vee a_1) \wedge(b_0 \vee b_1) \wedge
[((a_0 \vee a_2) \wedge (b_0 \vee b_2)) \vee ((a_1 \vee a_2) \wedge (b_1 \vee b_2))] $$

\item \cite{Haim1985a}*{Exam.~2.2}\cite{Haim1985b}*{eq.~1}\cite{Hawr1996}*{sec.~4 eq.~14}\linebreak[0]\cite{Pal2001}*{sec.~4}
For $X_1, X_2, X_3, Y_1, Y_2, Y_3 \in L$
$$ X_1 \wedge\{Y_1 \vee [(X_2 \vee Y_2)\wedge(X_3 \vee Y_3)]\} \leq
[(Q_{12} \vee Q_{23})\wedge(Y_1 \vee Y_3)]\vee X_3 $$
where $Q_{ij} = (X_i \vee X_j)\wedge(Y_i \vee Y_j)$

\item \label{th:arguesian:i3} \cite{Jons1953b}\cite{Jons1954a}\cite{Day1982}*{sec.~3 Def.~1}\cite{DayPick1984}\linebreak[0]\cite{Haim1985a}*{Exam.~1.1}\linebreak[0]\cite{Jons1972}\linebreak[0]\cite{Nat2017}*{ch.~4}\linebreak[0]\cite{EnMathArg}
For $a_0, a_1, a_2, b_0, b_1, b_2 \in L$
$$ (a_0 \vee b_0) \wedge (a_1 \vee b_1) \leq a_2 \vee b_2 \TNimplies c_2 \leq c_0 \vee c_1 $$
where
$$ c_i = (a_j \vee a_k) \wedge (b_j \vee b_k)
\TNfor \{i, j, k\} = \{0, 1, 2\} $$

\item \cite{Day1982}*{sec.~3 Th.~8}\cite{DayPick1984}\cite{Nat2017}*{ch.~4}
For $a_0, a_1, a_2, b_0, b_1, b_2 \in L$
$$ (a_0 \vee b_0) \wedge (a_1 \vee b_1) \wedge (a_2 \vee b_2) \leq a_0 \vee (b_0 \wedge (c \vee b_1)) $$
where
\begin{align*}
c_i & = (a_j \vee a_k) \wedge (b_j \vee b_k)
\TNfor \{i, j, k\} = \{0, 1, 2\} \\
c & = c_2 \wedge (c_0 \vee c_1)
\end{align*}

\item \label{th:arguesian:i5} \cite{Haim1985a}*{Exam.~2.2}
For $a_1, a_1^\prime, a_2, a_2^\prime, b_1, b_1^\prime, b_2, b_2^\prime, 
c, c_1^\prime, c_2^\prime \in L$
\begin{multline*}
c \wedge ([((a_1 \wedge a_2) \vee (a_1^\prime \wedge a_2^\prime))
\vee ((b_1 \wedge b_2) \vee (b_1^\prime \wedge b_2^\prime))]
\vee (c_1^\prime \wedge c_2^\prime)) \\
\leq a_1 \vee ([((a_2 \vee b_1) \wedge (a_2^\prime \vee b_1^\prime))
\vee ((b_2 \vee c) \wedge (b_2^\prime \vee c_1^\prime))] \wedge
(a_1 \vee c_2^\prime))
\end{multline*}
Note that the above is its own dual after exchanging the variables
$a_1 \leftrightarrow c$, $a_1^\prime \leftrightarrow c_1^\prime$,
$a_2 \leftrightarrow b_2$, and $a_2^\prime \leftrightarrow b_2^\prime$.

\item \cite{Day1982}*{sec.~3 Th.~7}\cite{DayPick1984}
For $x_0, x_1, x_2, y_0, y_1, y_2 \in L$
$$ (x_0 \vee x_1) \wedge (z \vee y_1) \leq
[(x_0 \vee x_2) \wedge (y_0 \vee y_2)] \vee
[(x_1 \vee x_2) \wedge (y_1 \vee y_2)] \vee [y_1 \wedge (x_0 \vee x_1)] $$
where
$$ z = y_0 \wedge [x_0 \vee ((x_1 \vee y_1) \wedge (x_2 \vee
y_2))] $$

\item \cite{Day1982}*{sec.~3 Th.~8}
For $a_0, a_1, a_2, b_0, b_1, b_2 \in L$
$$ (a_0 \vee b_0) \wedge (a_1 \vee b_1) \wedge (a_2 \vee b_2) \leq
a_0 \vee b_1 \vee \bar{c} $$
where
\begin{align*}
c_i & = (a_j \vee a_k) \wedge (b_j \vee b_k) \qquad \{i, j, k\} = \{0, 1, 2\} \\
\bar{c} & = c_2 \wedge (c_0 \vee c_1)
\end{align*}

\item \cite{Day1982}*{sec.~3 Th.~8}\cite{EnMathArg}
For $a_0, a_1, a_2, b_0, b_1, b_2 \in L$
$$ (a_0 \vee b_0) \wedge (a_1 \vee b_1) \wedge (a_2 \vee b_2) \leq
[a_0 \wedge (a_1 \vee \bar{c})] \vee [b_0 \wedge (b_1 \vee \bar{c})] $$
where
\begin{align*}
c_i & = (a_j \vee a_k) \wedge (b_j \vee b_k) \qquad \{i, j, k\} = \{0, 1, 2\} \\
\bar{c} & = c_2 \wedge (c_0 \vee c_1)
\end{align*}

\end{enumerate}
\end{theorem}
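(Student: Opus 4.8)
The plan is to route all eight conditions through two hubs --- Jónsson's original inequality and the conditional Desargues statement of condition~(3) --- and to exploit the fact that several of the listed forms are literally the same term written with different names. First I would observe that conditions~(1) and~(8) are the \emph{same} inequality: expanding the auxiliary element $y$ of~(1) gives $y = c_2 \wedge (c_0 \vee c_1)$ with $c_i = (a_j \vee a_k)\wedge(b_j \vee b_k)$, which is exactly the term $\bar c$ of~(8), and the two right-hand sides then coincide verbatim. Thus~(1) needs no separate argument once~(8) is handled. Next I would invoke the two theorems of \cite{Day1982} quoted above: Day's Theorem~8 is precisely the equivalence of the ``Desarguean'' implication with the three triple-meet inequalities appearing here as conditions~(3), (4), (7), (8), while Day's Theorem~7 adds the inequality appearing as condition~(6). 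This settles the mutual equivalence of (1), (3), (4), (6), (7), (8) in one stroke, reducing the theorem to attaching Haiman's two forms~(2) and~(5) to this cluster.

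Before the harder step I would secure the underlying modularity, since it is what lets the various forms be transformed into one another. Each triple-meet inequality --- (1), (4), (7), (8) --- entails the Desargues implication~(3): assuming central perspectivity $(a_0 \vee b_0)\wedge(a_1 \vee b_1) \le a_2 \vee b_2$, the triple meet $p = (a_0 \vee b_0)\wedge(a_1 \vee b_1)\wedge(a_2 \vee b_2)$ collapses to $(a_0 \vee b_0)\wedge(a_1 \vee b_1)$, and a short modular computation reads off the axial conclusion $c_2 \le c_0 \vee c_1$. Conversely the Desargues implication already forces modularity --- this is Day's Lemma~2, ``Desarguean lattices are modular'', obtained by specializing the six variables --- so that under any of the eight conditions we may compute with the modular law of Theorem~\ref{th:modular} throughout.

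The main obstacle is the converse passage from the conditional statement~(3) to an \emph{unconditional} inequality such as~(4) or~(8). The standard device, due to Jónsson \cite{Jons1953b}\cite{Jons1954a} and recast by Day, is to substitute for the ``third point'' components $a_2, b_2$ expressions that make the central-perspectivity hypothesis hold automatically; the axial conclusion then becomes, after simplification by the modular law, exactly the target identity. Verifying that this substitution reproduces the precise right-hand sides $a_0 \vee (b_0 \wedge (c \vee b_1))$, $a_0 \vee b_1 \vee \bar c$, and $[a_0 \wedge (a_1 \vee \bar c)]\vee[b_0 \wedge(b_1 \vee \bar c)]$ is the tedious modular-lattice computation constituting the content of Day's Theorems~7 and~8, which I would carry out or cite rather than reprove.

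Finally I would attach Haiman's forms. Condition~(5) is Haiman's self-dual identity, arising from the ordinary Arguesian law by the variable-doubling substitution of his Proposition~2.1 (each letter $x$ replaced by a meet $x_1 \wedge x_2$ of two fresh copies); this substitution yields an identity equivalent to the original and, after the indicated exchange of symbols, manifestly self-dual, so~(5) is equivalent to~(1) \cite{Haim1985a}\cite{Haim1985b}. Condition~(2) is the single, non-doubled form of the same law recorded by Haiman and by Pálfy; it is equivalent to~(1) under the same correspondence, or alternatively one derives $(2) \Leftrightarrow (3)$ directly by the central-perspectivity--forcing substitution used above. This closes the cycle and establishes the equivalence of all eight conditions.
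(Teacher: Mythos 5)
The paper never actually proves this theorem: it is a survey compilation, and its entire justification consists of the citations attached to each item. Your proposal assembles precisely those same sources into an explicit deduction, so in substance you are doing what the paper does, only more explicitly. Your key observations are correct: the $y$ of (1) expands to $c_2 \wedge (c_0 \vee c_1) = \bar{c}$, so (1) and (8) are the same inequality verbatim; Day's Theorem~8 is exactly the equivalence of the implication (3) with the three triple-meet inequalities (4), (7), (8); Day's Theorem~7 attaches (6); and Haiman's Proposition~2.1 doubling together with his ``simplified equivalent form'' of the J\'onsson identity correctly attach (5) and (2). The one flaw is organizational: your modularity bootstrap is circular as stated. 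You derive the implication (3) from each unconditional identity by ``a short modular computation,'' while obtaining modularity itself from (3) via Day's Lemma~2; but to apply the modular law under, say, hypothesis (4), you would need (4) $\Rightarrow$ modular \emph{before} you have established (4) $\Rightarrow$ (3). The standard repair is that each identity forces modularity directly by specialization of its variables (this is J\'onsson's Theorem~1.9, and it is how Day's and Gr\"atzer's proofs proceed), after which the computation you sketch is legitimate. Since you ultimately defer to Day's Theorems~7 and~8 for these equivalences rather than reproving them, the defect does not propagate into the final, citation-based structure of your argument, which matches the paper's.
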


\begin{definition}
A lattice satisfying any of the conditions of
theorem~\ref{th:arguesian} is called \emph{Arguesian}.\footnote{This
concept appeared in \cite{Schut1945}.}
\end{definition}

\begin{theorem} \cite{Jons1954a}*{sec.~1 Th.~1.9}\cite{Day1982}*{sec.~3 Lem.~2}
Any Arguesian lattice is modular.
\end{theorem}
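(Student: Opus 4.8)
The plan is to avoid manipulating the six-variable Arguesian identity directly and instead to reduce the claim to a single finite computation, using the pentagon characterization of modularity already recorded in Theorem~\ref{th:modular} (condition~\ref{th:modular:i18}: a lattice is modular iff it has no sublattice isomorphic to $N_5$). Each of the equivalent conditions defining an Arguesian lattice in Theorem~\ref{th:arguesian} is a universally quantified sentence (an identity, or the Horn implication~\ref{th:arguesian:i3}), hence is inherited by every sublattice. Consequently, if some Arguesian lattice $L$ failed to be modular, it would contain a copy of $N_5$ as a sublattice, and that copy would itself be Arguesian. So it suffices to exhibit six elements of $N_5$ witnessing the failure of one Arguesian condition; I will use the Desargues implication~\ref{th:arguesian:i3}.

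Write $N_5 = \{0, a, c, b, 1\}$ with $0 < a < c < 1$ and $b$ incomparable to both $a$ and $c$, so that $a \vee b = c \vee b = 1$ and $a \wedge b = c \wedge b = 0$. Take the triangles $(a_0, a_1, a_2) = (a, c, c)$ and $(b_0, b_1, b_2) = (b, a, 0)$. Then central perspectivity holds, since $(a_0 \vee b_0) \wedge (a_1 \vee b_1) = 1 \wedge c = c = c \vee 0 = a_2 \vee b_2$. Computing the $c_i = (a_j \vee a_k) \wedge (b_j \vee b_k)$ gives
\begin{align*}
c_2 &= (a \vee c) \wedge (b \vee a) = c \wedge 1 = c, \\
c_0 &= (c \vee c) \wedge (a \vee 0) = c \wedge a = a, \\
c_1 &= (a \vee c) \wedge (b \vee 0) = c \wedge b = 0,
\end{align*}
so $c_0 \vee c_1 = a$, and axial perspectivity would require $c = c_2 \leq c_0 \vee c_1 = a$, which is false because $a < c$. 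Thus $N_5$ violates condition~\ref{th:arguesian:i3} and is not Arguesian, completing the argument.

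I expect the only real subtlety to be the bookkeeping that the Arguesian conditions pass to sublattices; this is immediate for the identity forms and for the universal Horn form~\ref{th:arguesian:i3}, since universal sentences are preserved under substructures. An alternative, and historically the original, route (J\'onsson and Day) is to prove modularity directly by substituting carefully chosen lattice terms for $a_0, \dots, b_2$ in an identity form of the Arguesian law so that the inequality collapses to the nontrivial half of the modular law, namely $x \wedge (y \vee z) \leq (x \wedge y) \vee z$ when $x \geq z$ (condition~\ref{th:modular:i3}). That route keeps everything inside a single lattice and needs no appeal to the pentagon theorem, but its hard part is precisely discovering the right substitution; the pentagon reduction sidesteps that difficulty at the cost of invoking condition~\ref{th:modular:i18}.
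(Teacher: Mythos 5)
Your proof is correct, and it takes a genuinely different route from the one the paper relies on: the survey actually gives no proof of this theorem, only citations to J\'onsson (Th.~1.9) and Day (Lem.~2), and those sources argue by the direct-substitution method you mention only in your closing remark. Your core computation checks out: with $(a_0,a_1,a_2)=(a,c,c)$ and $(b_0,b_1,b_2)=(b,a,0)$ in $N_5$, central perspectivity holds because $1\wedge c=c\leq c=c\vee 0$, while $c_2=c\not\leq a=c_0\vee c_1$, so $N_5$ violates condition~\ref{th:arguesian:i3}; since that condition is a universal Horn sentence it is inherited by sublattices, and the pentagon criterion (condition~\ref{th:modular:i18} of Theorem~\ref{th:modular}) completes the argument. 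The one caveat worth recording concerns dependencies rather than logic. What your argument proves from first principles is that the Desargues implication~\ref{th:arguesian:i3} forces modularity, which is exactly Day's Lemma~2, since Day defines ``Desarguean'' by that implication. To get the statement for a lattice that is Arguesian in J\'onsson's sense---defined by one of the identity forms---you must invoke the equivalence of those identities with~\ref{th:arguesian:i3}, i.e., the full strength of Theorem~\ref{th:arguesian}; in the literature that equivalence is established after, and using, the modularity of Arguesian lattices, so as a standalone development your route would be circular for the identity forms, whereas J\'onsson's substitution proof is self-contained. Within the survey, where Theorem~\ref{th:arguesian} is quoted as given, your proof is sound, and what it buys is transparency: a single five-element counterexample plus the pentagon theorem, in place of an unmotivated substitution.
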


\begin{theorem} \cite{Jons1972}
The class of Arguesian lattices is defined by a finite set of identities and is
self-dual.
\end{theorem}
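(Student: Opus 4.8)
The statement comprises two assertions: that the Arguesian lattices form a finitely based equational class, and that this class is self-dual. The first is essentially a restatement of Theorem~\ref{th:arguesian}. Any of the single-inclusion forms listed there expresses the Arguesian condition as a single lattice inclusion $P \leq Q$ in six variables. Invoking the earlier observation that in a lattice $P \leq Q$ is equivalent to the equation $P \vee Q = Q$, this inclusion becomes a single identity. Adjoining it to the finite list of lattice axioms (themselves identities) produces a finite set of identities whose models are precisely the Arguesian lattices, so the class is finitely equationally defined.

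For self-duality I would exploit the self-dual form of the Arguesian law already recorded in the excerpt. By Theorem~\ref{th:arguesian} the Arguesian condition is equivalent to item~\ref{th:arguesian:i5}; write $S$ for that inclusion, regarded as an identity by the conversion above. The remark attached to item~\ref{th:arguesian:i5} asserts that its order-dual $S^{d}$ — formed by interchanging $\vee$ and $\wedge$ and reversing $\leq$ — is transformed back into $S$ by the relabelling $a_1 \leftrightarrow c$, $a_1' \leftrightarrow c_1'$, $a_2 \leftrightarrow b_2$, $a_2' \leftrightarrow b_2'$. Thus $S^{d}$ is literally $S$ up to a permutation of the variable names.

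The plan is then to combine two elementary facts. First, because an identity is quantified over all assignments of its variables, renaming the variables does not change which lattices satisfy it; hence $S$ and $S^{d}$ hold in exactly the same lattices. Second, by the general duality principle a lattice $L^{d}$ satisfies an identity $T$ if and only if $L$ satisfies the dual identity $T^{d}$. Chaining these, a lattice $L$ is Arguesian iff it satisfies $S$, iff it satisfies $S^{d}$, iff $L^{d}$ satisfies $S$, iff $L^{d}$ is Arguesian. Therefore $L$ lies in the class exactly when $L^{d}$ does, which is the definition of a self-dual class.

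Given Theorem~\ref{th:arguesian}, the only care this argument requires is bookkeeping: one must check that the prescribed four-fold variable swap really does carry $S^{d}$ back to $S$ term by term, and must state the ``renaming preserves the class of models'' step precisely. The genuine mathematical content is buried upstream, in the equivalence of item~\ref{th:arguesian:i5} with the other forms and in the existence of a self-dual form at all. The historically original route of J\'onsson~\cite{Jons1972} sidesteps the self-dual identity and instead derives by a direct lattice computation that the Desargues implication of item~\ref{th:arguesian:i3} entails its own order-dual implication, propagating the central-perspectivity hypothesis through a chain of modular and Arguesian inequalities until axial perspectivity emerges in the dual. That derivation is the real obstacle, and I would resort to it only if the self-dual identity of item~\ref{th:arguesian:i5} were unavailable.
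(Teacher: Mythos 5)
Your proposal is correct and takes essentially the same approach as the paper: the paper's proof consists of citing the two known arguments---J\'onsson's~\cite{Jons1972} derivation that item~\ref{th:arguesian:i3} of theorem~\ref{th:arguesian} implies its own dual, and Haiman's~\cite{Haim1985a} observation that item~\ref{th:arguesian:i5} is a self-dual identity---and your argument is simply a careful elaboration of the latter, making explicit the inclusion-to-identity conversion, the variable-renaming step, and the duality principle. Your closing remark that J\'onsson's direct implication-chasing route is the fallback matches the paper's presentation of it as the alternative proof.
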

\begin{proof}
\leavevmode

\cite{Jons1972} proves this by
showing theorem~\ref{th:arguesian} item~\ref{th:arguesian:i3} implies
its own dual.
\cite{Haim1985a} proves this by the fact
that theorem~\ref{th:arguesian} item~\ref{th:arguesian:i5} is self-dual.
\end{proof}

\begin{theorem} \label{th:proj-arg} \cite{Pal2001}*{sec.~4}
The subspace lattice of a projective plane is Arguesian if and only if
Desargues' Theorem is true in the projective plane.
\end{theorem}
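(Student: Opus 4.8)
The plan is to translate both sides of the equivalence into the common language of the subspace lattice $L$ of the plane and then match the Arguesian implication term-by-term against the Desargues configuration. The dictionary is the standard one: $L$ has length three, with $\hatzero$, the points, the lines, and $\hat{1}$ occupying heights $0,1,2,3$; for distinct points $P,Q$ the join $P \vee Q$ is the line through them and $P \wedge Q = \hatzero$; for distinct lines $\ell,m$ the meet $\ell \wedge m$ is their unique common point and $\ell \vee m = \hat{1}$; and $P \leq \ell$ means exactly that $P$ lies on $\ell$. By theorem~\ref{th:mod-proj}, $L$ is modular, so throughout I may use the implication form of the Arguesian condition in theorem~\ref{th:arguesian} item~\ref{th:arguesian:i3}.

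The crucial observation is that this implication, read for six atoms $a_0,a_1,a_2,b_0,b_1,b_2$, \emph{is} Desargues' theorem. Here $a_i \vee b_i$ is the line joining the $i$th pair of corresponding vertices, so the hypothesis $(a_0 \vee b_0) \wedge (a_1 \vee b_1) \leq a_2 \vee b_2$ says that the point in which the first two of these lines meet lies on the third; that is, the three lines are concurrent (central perspectivity). Symmetrically, $c_i = (a_j \vee a_k) \wedge (b_j \vee b_k)$ is the point in which the side through $a_j,a_k$ of the first triangle meets the corresponding side through $b_j,b_k$ of the second, so the conclusion $c_2 \leq c_0 \vee c_1$ says that the three points $c_0,c_1,c_2$ are collinear (axial perspectivity).

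The forward direction is then immediate. If $L$ is Arguesian, item~\ref{th:arguesian:i3} holds for all choices of elements, in particular for atoms realizing the vertices of two centrally perspective triangles; by the dictionary this yields axial perspectivity, which is precisely Desargues' theorem (its converse following by the duality principle for projective planes). For the converse I would assume Desargues holds and verify item~\ref{th:arguesian:i3} for \emph{every} sextuple of lattice elements. When the six elements are points forming two proper triangles in central perspective, the hypothesis is concurrency and Desargues supplies the collinearity, hence the conclusion; so the single genuinely geometric instance is handled.

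The hard part will be the remaining, degenerate instances, where some $a_i$ or $b_i$ is $\hatzero$, $\hat{1}$, or a line, or where vertices coincide, sides collapse, or a ``triangle'' is flat. My plan is to dispatch all of these using modularity alone. In a length-three modular lattice each such degeneracy forces one of the joins or meets appearing in the inequality to collapse to $\hatzero$, to a single atom, or to $\hat{1}$, after which the Arguesian inequality either has $\hatzero$ on its right or $\hat{1}$ on its left and is trivial, or reduces to an instance of the modular law of theorem~\ref{th:modular}. The real obstacle is the bookkeeping: one must check that no degenerate configuration secretly hides a nontrivial incidence that would itself require Desargues. The payoff of the case analysis is the confirmation that \emph{only} the fully non-degenerate perspective configuration does, which is exactly why Desargues' theorem is both necessary and sufficient.
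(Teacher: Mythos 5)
First, a point of comparison: the paper offers no proof of this theorem at all. It is stated as a quoted result from \cite{Pal2001}*{sec.~4}, and the underlying mathematics is essentially J\'onsson's Theorem 2.2 \cite{Jons1953b}*{sec.~2 Th.~2.2}. So your attempt cannot be measured against a proof in the paper; it has to stand on its own, and as written it does not. Your forward direction is fine and complete: by theorem~\ref{th:mod-proj} the lattice is modular, so the implication form of theorem~\ref{th:arguesian} item~\ref{th:arguesian:i3} is available, and specializing it to the six atoms of two proper triangles in central perspective translates, via your dictionary, exactly into axial perspectivity, i.e., Desargues' theorem.

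The converse, however, is a plan rather than a proof, and the plan defers precisely the step that carries all the weight. To conclude that $L$ is Arguesian you must verify item~\ref{th:arguesian:i3} for \emph{every} sextuple of elements of $L$: entries may be lines, $\hatzero$, $\hat{1}$, coincident points, collinear ``triangles'', triangles sharing vertices or sides, or proper triangles whose hypothesis holds for accidental rather than perspectival reasons (e.g.\ two of the lines $a_i \vee b_i$ coinciding). Your assertion that all such instances collapse ``using modularity alone'' is exactly the statement that needs proof; you describe its verification as bookkeeping and as the ``payoff of the case analysis,'' but the case analysis is never carried out, and it is the bulk of J\'onsson's argument in \cite{Jons1953b}. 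Note also that there is a cleaner route for this direction that avoids the case analysis entirely and stays within results the paper already states: if Desargues holds, the plane is coordinatizable over a division ring (the Veblen--Young theorem, as stated in the paper), so $L$ is the subspace lattice of a $3$-dimensional skew vector space; that lattice is a sublattice of the subgroup lattice of the underlying Abelian group, hence linear by theorem~\ref{th:normal-linear} together with closure of linearity under sublattices (theorem~\ref{th:linear-subd}); and every linear lattice is Arguesian (theorem~\ref{th:Dn-misc}, or \cite{Jons1953b}*{sec.~2 Lem.~2.1}). I would recommend either carrying out your degenerate-case analysis in full or replacing the converse direction by this chain.
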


\begin{theorem}
There are finite modular lattices that are not Arguesian.
\end{theorem}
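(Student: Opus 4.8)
The plan is to exhibit the subspace lattice of a finite non-Desarguesian projective plane as the desired example. The strategy rests on combining Theorem~\ref{th:mod-proj} and Theorem~\ref{th:proj-arg}: the former guarantees that the subspace lattice of any projective plane is modular, while the latter says such a lattice is Arguesian precisely when Desargues' Theorem holds in the plane. Hence any projective plane in which Desargues' Theorem fails yields a modular lattice that is not Arguesian, and the only substantive task is to produce such a plane that is finite.

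First I would invoke the classical fact from finite geometry that finite non-Desarguesian projective planes exist; the smallest have order $9$ (for instance the Hall plane and the Hughes plane). Fixing such a plane $P$ of finite order $n$, I would observe that its subspace lattice $L$ consists of the bottom element, the $n^2+n+1$ points, the $n^2+n+1$ lines, and the top element, so $L$ has exactly $2n^2+2n+4$ elements and is therefore finite. I would then apply Theorem~\ref{th:mod-proj} to conclude that $L$ is modular, since $P$ is a projective space (of dimension two). Finally I would apply Theorem~\ref{th:proj-arg}: because Desargues' Theorem fails in $P$, the lattice $L$ is not Arguesian. Thus $L$ is a finite modular lattice that is not Arguesian.

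The main obstacle is not the lattice-theoretic bookkeeping, which is routine given Theorems~\ref{th:mod-proj} and~\ref{th:proj-arg}, but rather the geometric input: the existence of a finite non-Desarguesian projective plane. This is a nontrivial classical result whose proof (constructing, say, a Hall plane from a quasifield that is not a field) lies outside lattice theory proper, so I would simply cite it. An alternative, purely lattice-theoretic route would be to display a specific small modular lattice and verify by direct computation that it violates one of the Arguesian inequalities of Theorem~\ref{th:arguesian}; this trades the geometric citation for a tedious finite check, so I consider the projective-plane argument the cleaner of the two.
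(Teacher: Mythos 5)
Your proposal is correct and follows essentially the same route as the paper's own proof: take a finite non-Desarguesian projective plane, apply Theorem~\ref{th:mod-proj} to get modularity of its subspace lattice, and Theorem~\ref{th:proj-arg} to conclude it is not Arguesian. The only difference is that you explicitly supply the geometric existence fact (e.g.\ the Hall or Hughes plane of order $9$) and the finiteness count, which the paper leaves implicit in its phrase ``consider a finite projective plane in which Desargues' Theorem is not true.''
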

\begin{proof}
\leavevmode

Consider a finite projective plane in which Desargues' Theorem is not
true.
By theorem~\ref{th:mod-proj}, the subspace lattice of that projective
plane is modular, but by theorem~\ref{th:proj-arg} it is not Arguesian.
\end{proof}

\begin{theorem} \cite{Haim1985b}*{Note~2}
Any lattice identity equivalent to the Arguesian law must
necessarily involve at least six variables.
\end{theorem}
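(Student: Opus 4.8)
The plan is to prove the contrapositive: to show that no lattice identity in five or fewer variables can be equivalent to the Arguesian law, so that any identity equivalent to it must involve at least six variables. The engine of the argument is the elementary observation that whether an assignment $v\colon\{x_1,\dots,x_k\}\to L$ satisfies an identity $\phi(x_1,\dots,x_k)$ depends only on the values $v(x_1),\dots,v(x_k)$ and on the lattice operations applied to them; that is, satisfaction of $\phi$ at $v$ is decided entirely within the sublattice $\langle v(x_1),\dots,v(x_k)\rangle$ generated by those $k$ elements. Consequently, if $\phi$ has at most $k$ variables and holds in every $k$-generated sublattice of $L$, then $\phi$ holds in $L$.

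Next I would assume, toward a contradiction, that there is an identity $\phi$ involving at most five variables that is equivalent to the Arguesian law, meaning that a lattice is Arguesian exactly when it satisfies $\phi$. The key input is the lattice constructed in \cite{Haim1985b}*{Note~2}: a lattice $L$ that is not Arguesian but all of whose five-generated sublattices are Arguesian. Concretely, each such sublattice is realized as a sublattice of the subspace lattice of a projective geometry over the quaternions; since the quaternions form a division ring, that geometry is Desarguesian, and therefore its subspace lattice, and hence each five-generated sublattice, is Arguesian (theorem~\ref{th:proj-arg} records the planar instance of this classical fact).

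With $L$ in hand the contradiction is immediate. Because $L$ is not Arguesian and $\phi$ is equivalent to the Arguesian law, $L$ fails to satisfy $\phi$, so some assignment $v$ of the at most five variables of $\phi$ into $L$ violates $\phi$. The at most five elements $v(x_1),\dots,v(x_5)$ generate a sublattice $S$, which is Arguesian by the construction of $L$ and hence satisfies $\phi$; but then the same assignment $v$, now read inside $S$, satisfies $\phi$, contradicting the choice of $v$. Therefore no five-variable identity can be equivalent to the Arguesian law, and any identity that is must involve at least six variables — matching the six variables $a_0,a_1,a_2,b_0,b_1,b_2$ of the standard forms in theorem~\ref{th:arguesian}.

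The entire difficulty is concentrated in the construction of $L$, which is the substantive content of \cite{Haim1985b} and which I would simply invoke rather than reproduce. The delicate point is arranging global non-Arguesianness to coexist with local Arguesianness: the lattice must fail the Arguesian implication, yet no five of its elements may jointly witness that failure. The quaternionic geometry is the natural ambient Arguesian lattice for this purpose precisely because it is Desarguesian but non-Pappian (compare \cite{Day1981}*{sec.~1}), giving enough room to accommodate each five-generated piece while preventing a coherent six-element Desargues configuration from being realized in $L$ as a whole. Verifying both halves of this — that every five-generated sublattice embeds into the geometry, and that $L$ itself nonetheless violates the Arguesian law — is the main obstacle, and it is exactly this balance that makes the bound of six sharp.
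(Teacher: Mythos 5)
Your proof is correct and takes essentially the same approach as the paper: both invoke the lattice of \cite{Haim1985b}*{Note~2} (non-Arguesian, yet with every five-generated sublattice Arguesian) together with the observation that satisfaction of an identity in at most five variables is decided entirely within the sublattice generated by the assigned values. Your contradiction framing and the added commentary on the quaternionic geometry are purely presentational differences from the paper's direct argument.
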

\begin{proof}
\leavevmode

\cite{Haim1985b}*{Note~2} constructs a non-Arguesian lattice all
of whose sublattices generated by five elements are Arguesian (and in
fact, are sublattices of a quaternionic projective geometry).
Thus, any identity with five or fewer variables that is true in
all Arguesian lattices is true in this lattice, and so the identity does
not exclude all non-Arguesian lattices.
\end{proof}

\begin{remark}
Reformulating the implication in theorem~\ref{th:arguesian}
item~\ref{th:arguesian:i3} into two equalities produces a form which 
strongly resembles the geometric statement
of Desargues Theorem in projective geometry:
For $a_0, a_1, a_2, b_0, b_1, b_2 \in L$
\begin{equation} \label{eq:arg-eq}
(a_0 \vee b_0) \wedge (a_1 \vee b_1) = a_2 \vee b_2 \TNimplies c_2 = c_0 \vee c_1
\end{equation}
where
$$ c_i = (a_j \vee a_k) \wedge (b_j \vee b_k)
\TNfor \{i, j, k\} = \{0, 1, 2\} $$
However, equation \ref{eq:arg-eq} is not straightforwardly equivalent to
item~\ref{th:arguesian:i3}, as in the Young-Fibonacci lattice of
degree 1, the instance
$ a_0 = 2, a_1 = 2, a_2 = 11, b_0 = 121, b_1 = 1211, b_2 = 21 $
satisfies the antecedent of \ref{eq:arg-eq} but not the consequent.
(However, it is unknown whether the Young-Fibonacci lattice is Arguesian.)
\end{remark}

\paragraph{Higher-order Arguesian}

\begin{theorem} \label{th:higher}
Given $n \geq 1$,
for all lattices $L$, the following conditions are equivalent:
\begin{enumerate}

\item  \label{th:higher:i1}
\cite{Haim1985a}*{sec.~2.0 Exam.~2.5, Figs.~6 and 7}\cite{Haim1987}*{sec.~3}\linebreak[0]\cite{Haim1991}*{sec.~3 Th.~1}\linebreak[0]\cite{Hawr1996}*{sec.~4 eq.~15}\linebreak[0]\cite{Pal2001}*{sec.~4}
For $a_1, a_2, \ldots, a_n \TNand b_1, b_2, \ldots, b_n \in L$
$$ a_n \wedge \left( \left[ \bigwedge_{i=1}^{n-1} (a_i \vee b_i) \right] \vee b_n \right)
\leq a_1 \vee \left( \left[ \bigvee_{i=1}^{n-1} ((a_i \vee a_{i+1}) \wedge (b_i \vee b_{i+1})) \right] \wedge (b_1 \vee b_n) \right) $$

\item \label{th:higher:i2} \cite{Hawr1996}*{sec.~4 eq.~16}
For $a_1, \ldots, a_n, a_1^\prime , \ldots, a_n^\prime, b_1, \ldots,
b_n, \TNand b_1^\prime, \ldots, b_n^\prime \in L$
$$ a_n^\prime \wedge \left( \left[ \bigwedge_{i=1}^{n-1} ((a_i \wedge a_1^\prime) \vee (b_i \wedge b_i^\prime)) \right] \vee (b_n \wedge b_n^\prime) \right)
\leq a_1 \vee \left( \left[ \bigvee_{i=1}^{n-1} ((a_1^\prime \vee a_{i+1}) \wedge (b_i^\prime \vee b_{i+1})) \right] \wedge (b_1 \vee b_n^\prime) \right) $$
Note that the above is self-dual under a suitable change of variables.

\end{enumerate}
\end{theorem}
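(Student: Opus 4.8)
The plan is to prove the two inclusions equivalent by exhibiting identity~\ref{th:higher:i2} as a \emph{variable-doubling} of identity~\ref{th:higher:i1}: each variable $x \in \{a_i, b_i\}$ of the first form is replaced by a primed/unprimed pair $x, x'$ according to a fixed rule, so that identifying $x'$ with $x$ (the diagonal substitution) carries~\ref{th:higher:i2} back to~\ref{th:higher:i1}, giving the easy direction, while a single instance of~\ref{th:higher:i1}, evaluated at a suitable meet of the doubled variables and then relaxed by monotonicity, yields~\ref{th:higher:i2}, giving the hard direction. This is the mechanism of Haiman's Proposition~2.1 \cite{Haim1985a} and Hawrylycz's Proposition~4.12 \cite{Hawr1996}; the transformation uses only substitution into~\ref{th:higher:i1} together with monotonicity of the lattice operations, so the resulting equivalence holds in every lattice, as asserted. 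The argument is uniform in $n$ for $n \geq 3$; the degenerate cases $n = 1, 2$, where the indexed meet and join are empty or reduce to a single term, I would check directly, as the consecutive-pair linkage exploited below is vacuous there.

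The backbone is an elementary monotonicity lemma, which I would state and prove first. If $P \leq Q$ is an inclusion between positive (complementation-free) lattice terms valid in $L$ and $x'$ is a fresh variable, then (a) replacing a single occurrence of a variable $x$ in the smaller term $P$ by $x \wedge x'$, and (b) replacing a single occurrence of a variable $x$ in the larger term $Q$ by $x \vee x'$, each yields an inclusion equivalent to $P \leq Q$ in $L$. Indeed, setting $x' = x$ recovers the original, while in the forward direction $x \wedge x' \leq x$ only decreases $P$ and $x \vee x' \geq x$ only increases $Q$, so the inclusion is preserved because every occurrence in a positive term sits in an order-preserving position. Applied to~\ref{th:higher:i1}, this lemma immediately accounts for the paired meet factors $(a_i \wedge a_i') \vee (b_i \wedge b_i')$ on the left of~\ref{th:higher:i2} and for the relaxed join slots on the right, reducing the problem to the genuinely two-sided splits.

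For the substantive (forward) direction I would instantiate~\ref{th:higher:i1} under the substitution $a_i \mapsto a_i \wedge a_i'$ and $b_i \mapsto b_i \wedge b_i'$ at the interior indices, obtaining a valid inclusion $L_1 \leq R_1$, and then sandwich it between the two sides of~\ref{th:higher:i2}: one checks $L_2 \leq L_1$ and $R_1 \leq R_2$ termwise by monotonicity, feeding the correct copy into each occurrence via $a_i \wedge a_i' \leq a_i$ and $a_i \wedge a_i' \leq a_i'$. The interior consecutive-pair terms $(a_i \vee a_{i+1}) \wedge (b_i \vee b_{i+1})$ match cleanly, since each shares its two indices with neighbouring terms and the meet substitution lies below both required copies. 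The main obstacle is the boundary: the variables $a_n$, $a_1$, $b_1$, $b_n$ occur once on each side (the leading $a_n'$ against the trailing $a_n$, and the wrap-around factor $b_1 \vee b_n'$), so they constitute true independent splits that bare pointwise monotonicity cannot justify, since naively one would be forced to require $a_n' \leq a_n$, which fails in general. Resolving this is exactly where the overlapping structure of the higher Arguesian term must be used: the substitution into~\ref{th:higher:i1} has to be chosen so that the discrepancy at a boundary variable is absorbed by the adjacent consecutive-pair factor rather than matched pointwise, and it is this linkage, the heart of Proposition~2.1 / 4.12, that I expect to be the hard part to write out in full.

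Finally I would record one bookkeeping point separately. The self-duality noted in~\ref{th:higher:i2} is purely syntactic: applying the interchange $\vee \leftrightarrow \wedge$ together with the stated exchange of variables sends the displayed inclusion to its own reverse, so it need only be verified by inspection and is logically independent of the equivalence with~\ref{th:higher:i1}. The only remaining index subtlety is the mod-$n$ wrap-around hidden in the factor $b_1 \vee b_n'$, which should be tracked carefully when matching terms in the forward direction.
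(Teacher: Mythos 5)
The paper offers no proof of this theorem at all: being a survey, it rests the equivalence entirely on the citation to Hawrylycz's Proposition~4.12 (which is Haiman's Proposition~2.1), and that proposition is exactly the variable-doubling mechanism you describe, so your overall plan is the one the paper implicitly invokes. The genuine problem is the step you defer at the end of your third paragraph --- absorbing the discrepancy between the outer $a_n'$ on the left of item~\ref{th:higher:i2} and the unprimed $a_n$ inside the $i = n-1$ term on the right --- and that step is not merely hard, it is impossible: as printed, item~\ref{th:higher:i2} does \emph{not} follow from item~\ref{th:higher:i1} in every lattice, because the decoupling of $a_n'$ from $a_n$ is a transcription error rather than a feature to be engineered around. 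Concretely (reading $a_1'$ as the obvious typo for $a_i'$ throughout, as you did), let $L$ be the lattice of subspaces of $V = \mathbb{R}^3$; this is a sublattice of the subgroup lattice of the abelian group $(\mathbb{R}^3,+)$, hence linear (theorem~\ref{th:normal-linear}), hence satisfies item~\ref{th:higher:i1} for every $n$ (theorem~\ref{th:Dn-misc}). Take $n = 3$ and set $a_1 = a_1' = \langle e_1 \rangle$, $b_1 = b_1' = \langle e_2 \rangle$, $a_2 = a_2' = \langle e_3 \rangle$, $b_2 = b_2' = \langle e_1{+}e_2{+}e_3 \rangle$, $b_3 = b_3' = 0$, and --- exploiting the fact that they are independent variables in item~\ref{th:higher:i2} --- $a_3' = V$ and $a_3 = 0$. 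The left side of item~\ref{th:higher:i2} evaluates to $\langle e_1, e_2 \rangle \wedge \langle e_3, e_1{+}e_2{+}e_3 \rangle = \langle e_1{+}e_2 \rangle$, while the right side evaluates to $\langle e_1 \rangle \vee \bigl( [\langle e_1{+}e_3 \rangle \vee 0] \wedge \langle e_2 \rangle \bigr) = \langle e_1 \rangle$, and $\langle e_1{+}e_2 \rangle \not\leq \langle e_1 \rangle$. Since $a_n$ occurs only positively and only on the right of item~\ref{th:higher:i2}, one may as well set it to $0$; no substitution into item~\ref{th:higher:i1}, which $L$ satisfies, can ever produce this false inequality, so the ``linkage'' you hope will absorb the boundary discrepancy does not exist.

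The repair is to couple the two occurrences, exactly as in Haiman's self-dual Arguesian identity (item~\ref{th:arguesian:i5}, where the outer variable $c$ appears unsplit on both sides): the doubling of Proposition~2.1/4.12 applies only to variables occurring once on the left and twice on the right of item~\ref{th:higher:i1}, and $a_n$, which occurs once on \emph{each} side, must remain a single variable. Once the statement is corrected --- say both occurrences read $a_n$ --- your proof closes with nothing left over and no ``hard part'' at all: substituting $A_n := a_n$, $A_i := a_i \wedge a_i'$ for $i < n$, and $B_i := b_i \wedge b_i'$ for all $i$ into item~\ref{th:higher:i1} makes the substituted left side \emph{literally equal} to the left side of the corrected item~\ref{th:higher:i2}, and the right sides compare by termwise monotonicity ($a_i \wedge a_i' \leq a_i'$, $a_{i+1} \wedge a_{i+1}' \leq a_{i+1}$, $a_n \leq a_n$, and $(b_1 \wedge b_1') \vee (b_n \wedge b_n') \leq b_1 \vee b_n'$ for the wrap-around); the converse is your diagonal substitution. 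Finally, a bookkeeping correction to your third paragraph: of the variables you list as boundary cases, only $a_n$ occurs once on each side of item~\ref{th:higher:i1}; each of $a_1$, $b_1$, $b_n$ occurs once on the left and twice on the right (the wrap-around factor $b_1 \vee b_n$ supplies the second occurrences of $b_1$ and $b_n$), so these are ordinary doubling cases already handled by your monotonicity lemma, not obstructions.
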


\begin{definition} \label{def:higher}
\cite{Haim1991}*{sec.~3 Th.~1}\cite{Haim1991}*{sec.~3 Th.~1}
For any $n \geq 1$, the equivalent conditions in
theorem~\ref{th:higher} are called $D_n$ and the \emph{$n$-th order
Arguesian law}.  Collectively, the $D_n$ for $n \geq 4$ are called
\emph{higher-order Arguesian laws}.
\end{definition}

\begin{definition}
A lattice which satisfies $D_n$ is called \emph{$n$-th order Arguesian}.
\end{definition}

\begin{theorem}
$D_1$ is trivially true for any lattice.
$D_2$ is equivalent to the modular law.
$D_3$ is equivalent to the Arguesian law.
\end{theorem}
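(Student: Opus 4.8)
The plan is to take the definitional form of $D_n$ given in Theorem~\ref{th:higher}, item~\ref{th:higher:i1}, specialize $n$ to $1$, $2$, and $3$, evaluate the indexed meet $\bigwedge_{i=1}^{n-1}$ and join $\bigvee_{i=1}^{n-1}$ (which are empty or single-term for these values), and then recognize each resulting inclusion as one of the equivalent laws already catalogued in Theorems~\ref{th:modular} and~\ref{th:arguesian}. The only manipulations required are bijective renamings of the universally quantified variables together with commutativity, idempotence, and absorption; in particular I shall use freely that renaming the variables of an identity by a bijection yields an equivalent identity, since the renamed statement is quantified over the same elements. No new lattice theory is needed.

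For $n=1$ the ranges $1\le i\le 0$ are empty; whatever convention one adopts for the empty indexed meet and join, the left-hand side of $D_1$ is a meet one of whose factors is $a_n=a_1$ and the right-hand side is a join one of whose summands is $a_1$, so $D_1$ collapses to $a_1\le a_1$ and holds in every lattice. For $n=2$, expanding and using $(b_1\vee b_2)\wedge(b_1\vee b_2)=b_1\vee b_2$ gives
$$ a_2\wedge(a_1\vee b_1\vee b_2)\ \leq\ a_1\vee\bigl((a_1\vee a_2)\wedge(b_1\vee b_2)\bigr). $$
Here $b_1$ and $b_2$ enter only through $b_1\vee b_2$, so setting $c=b_1\vee b_2$ (and recovering an arbitrary $c$ by taking $b_1=b_2=c$) shows that $D_2$, as a universally quantified statement, is equivalent to $a_2\wedge(a_1\vee c)\leq a_1\vee\bigl((a_1\vee a_2)\wedge c\bigr)$, which is item~\ref{th:modular:i12} of Theorem~\ref{th:modular} after the renaming $a=a_2$, $b=a_1$. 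Thus $D_2$ is equivalent to modularity.

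For $n=3$ the expansion is the six-variable inclusion
$$ a_3\wedge\bigl([(a_1\vee b_1)\wedge(a_2\vee b_2)]\vee b_3\bigr) \leq a_1\vee\Bigl(\bigl[((a_1\vee a_2)\wedge(b_1\vee b_2))\vee((a_2\vee a_3)\wedge(b_2\vee b_3))\bigr]\wedge(b_1\vee b_3)\Bigr). $$
I would then apply the bijective renaming $(a_1,a_2,a_3,b_1,b_2,b_3)\mapsto(X_2,X_3,X_1,Y_2,Y_3,Y_1)$. The left side becomes $X_1\wedge\bigl(Y_1\vee[(X_2\vee Y_2)\wedge(X_3\vee Y_3)]\bigr)$ and the right side becomes $X_2\vee\bigl((Q_{13}\vee Q_{23})\wedge(Y_1\vee Y_2)\bigr)$, where $Q_{ij}=(X_i\vee X_j)\wedge(Y_i\vee Y_j)$. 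Term for term, and up to commutativity, this is exactly the second form of the Arguesian identity in Theorem~\ref{th:arguesian} with the index transposition $2\leftrightarrow3$ applied; since that transposition is again a bijective renaming of the quantified variables, $D_3$ is equivalent to the Arguesian identity. This matches the identification $D_3={}$Arguesian law recorded above for \cite{Haim1985b}.

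The hard part will be the bookkeeping in the $D_3$ case, for two reasons. First, the renaming that aligns the two left-hand sides does not directly align the right-hand sides: the Arguesian right side as written carries $Q_{12},Q_{23}$ and the summand $X_3$, whereas the renamed $D_3$ produces $Q_{13},Q_{23}$ and the summand $X_2$, so the two differ precisely by the transposition $2\leftrightarrow3$. One must therefore observe that this transposition fixes the common left-hand side --- which it does, because the inner term $(X_2\vee Y_2)\wedge(X_3\vee Y_3)$ is symmetric in indices $2$ and $3$ by commutativity of $\wedge$ --- in order to conclude that applying it to the whole Arguesian identity yields an equivalent statement. Second, one must be careful that the empty and singleton indexed operations in $D_1$ and $D_2$, and the three-index meet and join in $D_3$, have all been evaluated correctly before any renaming. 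Once these term-by-term matches are checked, the equivalences are immediate, and no appeal to the deeper representation theory behind the higher Arguesian identities is required.
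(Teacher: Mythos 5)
Your proof is correct, and it takes a genuinely different route from the paper for the simple reason that the paper offers no proof at all: this theorem is stated bare, with the identifications implicitly delegated to the cited literature (Haiman's remark that $D_3$ is the Arguesian law \cite{Haim1985b}\cite{Haim1987}, and his Example 2.3 form of the modular law, which the survey records as item~\ref{th:modular:i12} of Theorem~\ref{th:modular}). Your direct syntactic verification fills that gap, and each step checks out. For $n=1$, observing that the left side is a meet with factor $a_1$ and the right side a join with summand $a_1$ makes the conclusion independent of any convention for the empty indexed operations. For $n=2$, the elimination of $b_1,b_2$ in favor of $c=b_1\vee b_2$ is correctly made reversible by the substitution $b_1=b_2=c$, and the result is item~\ref{th:modular:i12} up to commutativity. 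For $n=3$, the bookkeeping works as you describe: the renaming $(a_1,a_2,a_3,b_1,b_2,b_3)\mapsto(X_2,X_3,X_1,Y_2,Y_3,Y_1)$ aligns the left sides, the residual mismatch on the right ($Q_{13},Q_{23}$ and summand $X_2$ versus $Q_{12},Q_{23}$ and summand $X_3$) is exactly the transposition $2\leftrightarrow 3$, and that transposition is a legitimate bijective renaming of the quantified variables which, as you note, fixes the shared left side by commutativity --- so the renamed $D_3$ and the catalogued Arguesian form in Theorem~\ref{th:arguesian} are equivalent as universally quantified statements. What your approach buys is a verification independent of the citations, which has real value here because the survey transcribes Haiman's higher Arguesian identities and the Arguesian identity with different variable conventions, so the claimed coincidences at $n=2,3$ are not visible by inspection; the only cost is that your equivalences are relative to the catalogues of Theorems~\ref{th:modular} and~\ref{th:arguesian} (you match $D_2$ and $D_3$ to particular listed forms rather than reproving those equivalences), which is exactly the right granularity for this statement.
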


\begin{theorem}
For any $n \geq 1$, the class of $n$-th order Arguesian lattices is
defined by a finite set of identities and is self-dual.
\end{theorem}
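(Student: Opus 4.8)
The plan is to mimic exactly the template used above for the classes of general, modular, and Arguesian lattices. For the equational-definability half, I would begin with the finite set of lattice axioms and adjoin the single law $D_n$. Because $D_n$ is presented in theorem~\ref{th:higher} as an inequality $P \leq Q$, I would invoke the earlier fact that in any lattice $P \leq Q$ is equivalent to the equation $P \vee Q = Q$ (equivalently $P \wedge Q = P$), so that $D_n$ may be treated as a single identity. The lattice axioms together with this one identity then form a finite set of identities whose models are precisely the $n$-th order Arguesian lattices, which establishes that the class is equationally defined for each fixed $n$.

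For self-duality I would use the self-dual form of $D_n$ recorded in theorem~\ref{th:higher} item~\ref{th:higher:i2}, where it is noted that the displayed identity becomes its own dual after a suitable change of variables. Writing $D_n$ in this form, the entire axiom set --- the lattice axioms together with $D_n$ --- is self-dual: the lattice axioms are self-dual, and the single adjoined identity equals its own dual up to a bijective renaming of its variables. Since renaming universally quantified variables never changes the class of algebras that an identity defines, the dual of every axiom again lies in the axiom set, so the dual class coincides with the original class, and hence the class of $n$-th order Arguesian lattices is self-dual.

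The one point needing care is the verification underlying item~\ref{th:higher:i2}: one must exhibit the promised change of variables and check that applying duality followed by this renaming returns the identity verbatim. Concretely I would confirm that the prescribed substitution --- swapping the primed and unprimed $a$- and $b$-families together with an appropriate reindexing --- is a bijection on the four families of variables and carries each meet to the correspondingly indexed join and back. I expect no real difficulty here, since this is simply the general-$n$ analogue of the self-duality check already made for the ordinary Arguesian law (the case $n = 3$) through item~\ref{th:arguesian:i5}; the main obstacle, such as it is, is purely the bookkeeping of matching indices in the general substitution.
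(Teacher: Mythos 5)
Your proposal is correct and matches the paper's own proof, which likewise takes the defining identities to be the lattice axioms together with $D_n$ in the self-dual form of theorem~\ref{th:higher} item~\ref{th:higher:i2} and observes that this axiom set is self-dual. You simply spell out the details the paper leaves implicit (the conversion of the inequality to an identity via $P \leq Q \Leftrightarrow P \vee Q = Q$, and the fact that a variable renaming does not change the defined class), which is a faithful elaboration rather than a different route.
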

\begin{proof}
\leavevmode

The defining identities are the axioms of lattices and $D_n$
[\ref{th:higher:i2}].  These identities are self-dual.
\end{proof}

\begin{theorem} \label{th:Dn-weakly-stronger}
For any $n \ge 1$, $D_{n+1}$ implies $D_n$.
\end{theorem}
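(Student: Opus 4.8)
The plan is to obtain $D_n$ as a \emph{specialization} of $D_{n+1}$: since $D_{n+1}$ (theorem~\ref{th:higher} item~\ref{th:higher:i1}) is an identity, it holds for every assignment of its variables, in particular for assignments in which certain variables are set equal. It is convenient to read $D_m$ as a statement attached to a ``path'' on the index pairs $(a_1,b_1),\dots,(a_m,b_m)$, whose left side meets over the pairs $\bigwedge_{i=1}^{m-1}(a_i \vee b_i)$ and whose right side joins the consecutive ``edge'' terms $(a_i \vee a_{i+1}) \wedge (b_i \vee b_{i+1})$. Collapsing or contracting a portion of the longer path should then produce the shorter one. The case $n=1$ is trivial, as $D_1$ holds in every lattice, so assume $n \geq 2$.

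First I would show that $D_{n+1}$ already forces the modular law. In $D_{n+1}$ substitute $a_i := a_1$ and $b_i := b_1$ for all $2 \leq i \leq n$, leaving $a_{n+1},b_{n+1}$ free. The left side collapses, since $\bigwedge_{i=1}^{n}(a_i \vee b_i) = a_1 \vee b_1$, to $a_{n+1} \wedge (a_1 \vee b_1 \vee b_{n+1})$; on the right, every edge among the merged pairs contributes only $a_1 \wedge b_1$, the single surviving edge contributes $(a_1 \vee a_{n+1}) \wedge (b_1 \vee b_{n+1})$, and because this whole join already lies below the closing factor $b_1 \vee b_{n+1}$ that outer meet drops out without any appeal to modularity. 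What remains is exactly $D_2$, which is the modular law. Hence any $D_{n+1}$-lattice is modular.

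With modularity now available, I would contract the first edge: substitute $a_2 := a_1$, $b_2 := b_1$ in $D_{n+1}$ and relabel the free pairs $1,3,4,\dots,n+1$ as $1,2,\dots,n$. The left side passes immediately to the left side of $D_n$, since the doubled factor $(a_1 \vee b_1)$ in the meet is idempotent and no modularity is needed. On the right side the contracted edge $(1,2)$ degenerates to the spurious term $a_1 \wedge b_1$, which sits inside the meet with the closing factor $b_1 \vee b_{n+1}$; since $a_1 \wedge b_1 \leq b_1 \leq b_1 \vee b_{n+1}$, the modular law (theorem~\ref{th:modular} item~\ref{th:modular:i3}) pulls it outside the meet, after which it is absorbed by the leading join with $a_1$. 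The surviving expression is, after relabeling, exactly the right side of $D_n$, so $D_{n+1}$ implies $D_n$.

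The main obstacle is precisely this spurious ``self-loop'' term $a_1 \wedge b_1$ produced by contracting an edge: unlike the degenerate collapse used to extract modularity, here the closing factor $b_1 \vee b_{n+1}$ cannot simply be discarded (the remaining edge terms need not lie below it), so the argument genuinely requires the modular law --- which is why the modularity of $D_{n+1}$-lattices must be established first, avoiding any circular appeal to the very hierarchy $D_{n+1} \Rightarrow \dots \Rightarrow D_2$ being proved. The only checking left is the routine verification that, after relabeling the indices $3,\dots,n+1$ to $2,\dots,n$, the contracted identity matches $D_n$ term for term.
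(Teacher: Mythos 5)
Your proof is correct, but it takes a detour that the paper's own proof shows to be unnecessary. The paper performs exactly your contraction ($a_2 = a_1$, $b_2 = b_1$) and then simply observes that the spurious edge term $(a_1 \vee a_2) \wedge (b_1 \vee b_2) = a_1 \wedge b_1$ lies below the \emph{adjacent} edge term $(a_2 \vee a_3) \wedge (b_2 \vee b_3) = (a_1 \vee a_3) \wedge (b_1 \vee b_3)$ --- indeed $a_1 \wedge b_1 \leq a_1 \leq a_1 \vee a_3$ and $a_1 \wedge b_1 \leq b_1 \leq b_1 \vee b_3$ --- so the spurious term drops out of the join by the lattice axioms alone, and the right side of $D_n$ appears immediately, with no modularity anywhere. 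Your closing claim that ``the argument genuinely requires the modular law'' is therefore mistaken: you checked whether the spurious term could be absorbed against the closing factor $b_1 \vee b_{n+1}$ (where, as you say, plain absorption is not available), but overlooked that it is absorbed by its neighboring term inside the same join. That said, both halves of your argument are sound: your first specialization ($a_i := a_1$, $b_i := b_1$ for $2 \leq i \leq n$) does yield exactly $D_2$, hence modularity of any $D_{n+1}$-lattice, and your modular rewriting $[(a_1 \wedge b_1) \vee J] \wedge (b_1 \vee b_{n+1}) = (a_1 \wedge b_1) \vee [J \wedge (b_1 \vee b_{n+1})]$, followed by absorption into the leading $a_1$, is a legitimate application of the modular law, after which the relabeling does produce $D_n$ term for term. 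So your route is valid and has the incidental merit of making explicit that $D_{n+1}$ forces modularity; the paper's route is shorter and makes the stronger point that $D_{n+1} \Rightarrow D_n$ is a purely equational specialization-plus-absorption argument requiring nothing beyond the lattice axioms.
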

\begin{proof}
\leavevmode

The theorem is true for $n = 1$ because $D_1$ is trivially true.

Alternatively, $n \geq 2$.
We start with this formulation of $D_{n+1}$, set $a_1 = a_2$ and
$b_1 = b_2$ and will show that the result is $D_n$.
\begin{equation}
a_{n+1} \wedge \left( \left[ \bigwedge_{i=1}^n (a_i \vee b_i) \right] \vee b_{n+1} \right)
\leq a_1 \vee \left( \left[ \bigvee_{i=1}^n ((a_i \vee
a_{i+1}) \wedge (b_i \vee b_{i+1})) \right] \wedge (b_1 \vee b_{n+1}) \right)
\label{eq:Dn}
\end{equation}
Consider the iterated meet on the left of equation~\ref{eq:Dn}.  The
substitution makes the first two terms equal, so
\begin{equation} \label{eq:Dn-meet}
\bigwedge_{i=1}^n (a_i \vee b_i) 
= \bigwedge_{i=2}^n (a_i \vee b_i)
\end{equation}
Consider the iterated join on the right of equation~\ref{eq:Dn}.  The
first two terms are
$ (a_1 \vee a_2) \wedge (b_1 \vee b_2) $ and
$ (a_2 \vee a_3) \wedge (b_2 \vee b_3) $.
Substituting turns the first term into $ a_2 \wedge b_2 $, which is
$\leq$ the second term and thus drops out of the join.
\begin{equation} \label{eq:Dn-join}
\bigvee_{i=1}^n ((a_i \vee a_{i+1}) \wedge (b_i \vee b_{i+1}))
= \bigvee_{i=2}^n ((a_i \vee a_{i+1}) \wedge (b_i \vee b_{i+1}))
\end{equation}
Now looking at equation~\ref{eq:Dn} as $D_{n+1}$,
substituting $a_1 = a_2, b_1 = b_2$, and using
equations~\ref{eq:Dn-meet} and~\ref{eq:Dn-join}
\begin{align*}
a_{n+1} \wedge \left( \left[ \bigwedge_{i=2}^n (a_i \vee b_i) \right] \vee b_{n+1} \right)
& \leq a_2 \vee \left( \left[ \bigvee_{i=2}^n ((a_i \vee
a_{i+1}) \wedge (b_i \vee b_{i+1})) \right] \wedge (b_2 \vee b_{n+1}) \right) \\
\end{align*}
which, with a change of variables, is $D_n$.
\end{proof}

\begin{definition}
\cite{Haim1987}*{sec.~2} defines a series of finite lattices named $A_n$ for
$n \geq 3$.
\end{definition}

\begin{theorem} \label{th:Dn-misc}
\leavevmode

\begin{enumerate}
\item \cite{Haim1985a}*{sec.~2.0 Exam.~2.5} Every $D_n$ is satisfied by
every linear[\ref{def:linear}] lattice.
\item $D_n$ can be written with $2n$ variables (in the form of defining
equation \ref{th:higher:i1}).
\item \cite{Haim1987}*{sec.~3} $D_n$ fails in $A_n$, and hence $A_n$
is not linear.
\item \cite{Haim1987}*{sec.~3} Every proper sublattice of $A_n$ is linear,
and hence satisfies all $D_n$'s.
\item \cite{Haim1987}*{sec.~3} At least $n$ elements are required to
generate $A_n$.
\end{enumerate}
\end{theorem}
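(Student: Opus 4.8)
The plan is to treat this compound statement by separating the one elementary observation and the connecting deductions, which I would prove directly, from the three substantive facts about the family $A_n$, which I would import from \cite{Haim1987}.

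First I would dispose of item~(2), which is immediate: the defining form of $D_n$ exhibited in theorem~\ref{th:higher} item~\ref{th:higher:i1} is written using precisely the variables $a_1, \dots, a_n, b_1, \dots, b_n$ and no others, so it involves $2n$ variables.

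Next I would establish item~(1), that every linear lattice satisfies every $D_n$, citing \cite{Haim1985a}*{sec.~2.0 Exam.~2.5}. The argument there is that each higher Arguesian identity has a proof in Haiman's diagrammatic proof theory for linear lattices: reading the two sides off their series--parallel graphs, one produces a planar, self-dual derivation entirely parallel to the proof of the ordinary Arguesian identity (the case $n = 3$). Because any inequality derivable in that system is valid in every lattice of commuting equivalence relations, $D_n$ holds in every linear lattice. This item is the linchpin, since both ``hence'' clauses rely on it.

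The remaining content---that $D_n$ fails in $A_n$ (item~(3)), that every proper sublattice of $A_n$ is linear (item~(4)), and that $A_n$ cannot be generated by fewer than $n$ elements (item~(5))---I would cite directly to \cite{Haim1987}*{sec.~3}. Granting these, the two ``hence'' clauses follow from item~(1) alone. For item~(3): were $A_n$ linear, item~(1) would force it to satisfy $D_n$, contradicting the failure of $D_n$ there, so $A_n$ is not linear. For item~(4): once a proper sublattice is known to be linear, item~(1) shows it satisfies $D_m$ for every $m$. The hard part is thus not the survey-level bookkeeping but the imported results themselves: Haiman's explicit construction of the lattices $A_n$, the verification that $D_n$ fails in $A_n$ while every proper sublattice stays linear, and the generation bound. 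Carrying those out from scratch would require setting up the series--parallel proof theory underlying item~(1) and performing Haiman's combinatorial analysis of $A_n$---work well beyond the scope of a survey, which is why I would present them by citation.
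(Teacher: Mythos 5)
Your proposal is correct and matches the paper's treatment: the paper, being a survey, offers no proof of this theorem beyond the citations embedded in its statement, importing items (1), (3), (4), (5) from \cite{Haim1985a} and \cite{Haim1987} exactly as you do, with item (2) read off from the form of the identity in theorem~\ref{th:higher} and the two ``hence'' clauses following by the same elementary logic you give (non-linearity of $A_n$ by contraposition of item (1), and proper sublattices satisfying every $D_m$ by applying item (1) to them).
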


\begin{theorem} \label{th:vars}
Any statement containing $< n$ variables that is true in all
linear[\ref{def:linear}] lattices is true in $A_n$.
\end{theorem}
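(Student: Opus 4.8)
The plan is to leverage two facts about $A_n$ recorded in theorem~\ref{th:Dn-misc}: that generating all of $A_n$ requires at least $n$ elements (item~5), and that every \emph{proper} sublattice of $A_n$ is linear (item~4). Recall that a statement (a universal Horn sentence in the sense defined earlier) fails in a lattice precisely when some assignment of its variables to lattice elements makes every antecedent equation true while the conclusion equation is false; so it suffices to verify that no such bad assignment exists in $A_n$.

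First I would fix an arbitrary assignment $\boldsymbol{x} \mapsto \boldsymbol{a}$ of the variables of the statement to elements of $A_n$. Since the statement uses fewer than $n$ variables, the set $X$ of values actually taken satisfies $|X| < n$. By theorem~\ref{th:Dn-misc} item~5, a set of fewer than $n$ elements cannot generate all of $A_n$, so the sublattice $S = \langle X \rangle$ that it generates is proper; hence, by theorem~\ref{th:Dn-misc} item~4, $S$ is linear.

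Next I would observe that each lattice polynomial occurring in the statement, evaluated at this assignment, yields an element of $S$: the values $\boldsymbol{a}$ lie in $S$, and $S$ is closed under $\vee$ and $\wedge$. Consequently the truth value of each antecedent equation $P_i(\boldsymbol{a}) = Q_i(\boldsymbol{a})$, and of the conclusion $P(\boldsymbol{a}) = Q(\boldsymbol{a})$, is the same whether computed in $A_n$ or in $S$. Because the statement holds in every linear lattice, it holds in $S$; so for this assignment either some antecedent already fails (and the implication is vacuously satisfied) or all antecedents hold and the conclusion holds too. In either case the assignment is not a counterexample, and since it was arbitrary, the statement is valid in $A_n$.

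The one step requiring care --- and the only content beyond bookkeeping --- is the reduction to the generated sublattice: that whether a single assignment satisfies the statement depends only on the subalgebra generated by the values, not on the ambient lattice. This is the standard universal-algebra fact that term operations are computed inside any subalgebra containing their arguments, but since the whole argument hinges on it, I would state it explicitly rather than leave it implicit. Everything else is a direct application of the cited structural properties of $A_n$.
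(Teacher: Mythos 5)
Your proposal is correct and follows essentially the same route as the paper's proof: restrict to the sublattice generated by the variable values, note it is proper since fewer than $n$ elements cannot generate $A_n$ (theorem~\ref{th:Dn-misc}), conclude it is linear, and transfer the truth of the statement back to $A_n$. Your version merely makes explicit the universal-algebra step (term evaluations are computed inside the generated sublattice) and the Horn-sentence bookkeeping that the paper leaves implicit.
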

\begin{proof}
\leavevmode

Given such a statement and values of its variables, the truth of that
instance of the
statement depends only on the sublattice generated by the values of
the variables.  Since the number of variables is less than the number
needed to generate $A_n$, that sublattice is a proper sublattice, and
hence linear.  Thus, the statement is true for those values of the
variables.  Since this follows for any values of the variables, the
statement is satisfied by $A_n$.
\end{proof}

\begin{theorem} \label{th:Dn-strictly-stronger}
The sequence of properties $D_{2^n-1}$ for $n \geq 2$ (specifically $D_3,
D_7, D_{15}, \ldots$) are an infinite sequence of identities of
strictly increasing strength on the class of finite lattices.
The sequence begins with $D_3$, which is the Arguesian condition.
\end{theorem}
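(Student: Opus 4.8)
The plan is to prove the statement in two halves: that each identity in the sequence is implied by its successor (so strength is weakly increasing), and that no identity in the sequence implies its successor (so strength is \emph{strictly} increasing), exhibiting for each consecutive pair a \emph{finite} lattice that separates them. The claim that the sequence begins with the Arguesian condition is immediate, since $D_3$ is the case $n=2$ and $D_3$ has already been identified as equivalent to the Arguesian law; and each $D_k$ is genuinely an identity, being an inclusion $P \leq Q$, which abbreviates an equation. For the weak direction, Theorem~\ref{th:Dn-weakly-stronger} gives $D_{k+1} \TNimplies D_k$ for every $k \geq 1$, so by induction $D_K \TNimplies D_k$ whenever $K \geq k$; applying this with $K = 2^{n+1}-1$ and $k = 2^n-1$ (and $2^{n+1}-1 > 2^n-1$) shows $D_{2^{n+1}-1}$ implies $D_{2^n-1}$.

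The substance of the theorem is the strict direction, namely that $D_{2^n-1}$ does \emph{not} imply $D_{2^{n+1}-1}$. I would prove this by producing a finite lattice that satisfies the weaker identity but violates the stronger one; the candidate is $A_M$ with $M = 2^{n+1}-1$ (which is defined, since $M \geq 7 \geq 3$ for $n \geq 2$). On the one hand, by Theorem~\ref{th:Dn-misc} item~3, $D_M$, that is $D_{2^{n+1}-1}$, fails in $A_M$. On the other hand, $D_{2^n-1}$ holds in $A_M$: by Theorem~\ref{th:Dn-misc} item~1 it holds in every linear lattice, and by Theorem~\ref{th:Dn-misc} item~2 it can be written with $2(2^n-1) = 2^{n+1}-2$ variables, so since $2^{n+1}-2 < 2^{n+1}-1 = M$, Theorem~\ref{th:vars} forces $D_{2^n-1}$ to hold in $A_M$. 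Thus $A_M$ is a finite lattice separating $D_{2^n-1}$ from $D_{2^{n+1}-1}$, and because $A_M$ is finite this establishes strict increase precisely on the class of finite lattices, as claimed.

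The only real content, and the reason the particular subsequence $D_{2^n-1}$ is chosen, is an arithmetic matching: the separation argument for $D_k \not\TNimplies D_{k'}$ requires the variable count $2k$ of the weaker identity to be \emph{strictly} below the index $k'$ of the stronger one, because Theorem~\ref{th:vars} transfers into $A_{k'}$ only those statements having fewer than $k'$ variables. The doubling sequence $k_n = 2^n-1$ is engineered so that $k_{n+1} = 2k_n + 1$, giving exactly $2k_n = k_{n+1}-1 < k_{n+1}$; the inequality is tight, which is why one cannot take consecutive $D_n$'s (whose variable count $2n$ already exceeds the index $n$) but must spread the sequence out geometrically. I expect no genuine obstacle beyond checking this inequality and correctly invoking the variable-counting principle, since all the hard lattice-theoretic work, the construction of $A_n$ and the verification of its closure and generation properties, is packaged in the cited results about $A_n$ that Theorems~\ref{th:Dn-misc} and~\ref{th:vars} record.
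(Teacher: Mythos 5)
Your proposal is correct and follows essentially the same route as the paper's own proof: the weak direction via Theorem~\ref{th:Dn-weakly-stronger}, and the strict separation by showing $D_{2^n-1}$ holds in the finite lattice $A_{2^{n+1}-1}$ (via the variable count $2(2^n-1) = 2^{n+1}-2 < 2^{n+1}-1$ together with Theorems~\ref{th:Dn-misc} and~\ref{th:vars}) while $D_{2^{n+1}-1}$ fails there. Your closing remark explaining why the geometric spacing $k_{n+1} = 2k_n+1$ is exactly what the variable-counting argument requires is a nice addition the paper leaves implicit.
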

\begin{proof}
\leavevmode

Consider $D_i$ and $A_j$ with $j > 2i$.
$D_i$ has $2i$ variables and by theorems~\ref{th:Dn-misc} and~\ref{th:vars},
so $D_i$ is valid in $A_j$.

By theorem~\ref{th:Dn-weakly-stronger}, $D_{2^{n+1}-1}$ is at least as
strong as $D_{2^n-1}$.
But $D_{2^n-1}$ is valid in $A_{2^{n+1}-1}$ and $D_{2^{n+1}-1}$ fails
in it, showing that $D_{2^{n+1}-1}$ is strictly stronger than
$D_{2^n-1}$.
\end{proof}

\begin{question} \label{q:Dn-strictly-stronger} \cite{Haim1987}*{sec.~4}
It is expected that $D_{n+1}$ is strictly stronger than $D_n$ for all
$n \geq 1$.  Is this true?  Is this true in the class of finite lattices?
\end{question}

\paragraph{Unbounded-order Arguesian}

\begin{definition}
The property $D_\infty$ or the \emph{unbounded-order Arguesian law} is the
conjunction of all properties $D_n$ for $n \geq 3$
[\ref{def:higher}].\footnote{From \cite{Haim1985a}*{sec.~2.0
Exam.~2.5, Figs.~6 and 7} it is clear that there is no transfinite
version of the sentences $D$, and thus no grounds for considering the
index of $D$ to be either a cardinal or an ordinal.  So instead of using
the well-defined $\aleph_0$ or $\omega$, we use the indicative
$\infty$.}
A lattice which satisfies $D_\infty$ is called \emph{unbounded-order
Arguesian}.
\end{definition}

\begin{theorem}
The unbounded-order Arguesian law is strictly stronger than all of the
$D_n$ on the class of finite lattices.
\end{theorem}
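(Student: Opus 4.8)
The plan is to prove the two halves separately: that $D_\infty$ implies each $D_n$, which is essentially immediate, and that no single $D_n$ implies $D_\infty$ on the class of finite lattices, which I would establish by exhibiting an explicit finite counterexample from the family $A_m$. Since $D_\infty$ is by definition the conjunction $\bigwedge_{k \geq 3} D_k$, each conjunct $D_k$ with $k \geq 3$ is trivially implied. For $n \leq 2$, the law $D_n$ is either trivially true ($D_1$) or the modular law ($D_2$), and is implied by $D_3$, the Arguesian law, via Theorem~\ref{th:Dn-weakly-stronger}; so $D_\infty$ is at least as strong as every $D_n$.

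The substantive content is strictness, and here the idea is to play the variable-count bound against the failure result, both applied to a single well-chosen lattice. Fix $n \geq 1$ and set $m = 2n + 1$, so that $m \geq 3$ and $A_m$ is defined, with $D_m$ being one of the conjuncts of $D_\infty$. I would then argue that $A_m$ satisfies $D_n$ yet violates $D_m$. For the first claim, Theorem~\ref{th:Dn-misc} records that $D_n$ holds in every linear lattice and can be written with only $2n$ variables; since $2n < m$, Theorem~\ref{th:vars} yields that $D_n$ is valid in $A_m$. For the second claim, Theorem~\ref{th:Dn-misc} states directly that $D_m$ fails in $A_m$, and because $D_m$ is one of the conjuncts of $D_\infty$, the finite lattice $A_m$ fails $D_\infty$ as well.

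Combining these, $A_m$ is a finite lattice satisfying $D_n$ but not $D_\infty$, so $D_n$ does not imply $D_\infty$ on the class of finite lattices. Together with the forward implication, this shows $D_\infty$ is strictly stronger than $D_n$ for every $n \geq 1$, as claimed.

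The only delicate point is the choice of $m$, and this is exactly where the main (mild) obstacle lies: $m$ must be large enough that $D_n$ still holds in $A_m$ by the variable bound, while at the same time $A_m$ must fail some conjunct of $D_\infty$. The two cited theorems mesh precisely to make the single value $m = 2n + 1$ work simultaneously, so no genuine computation is required; one merely needs to verify that $2n < m$ and $m \geq 3$, both of which hold by construction.
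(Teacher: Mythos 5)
Your proposal is correct, and its mathematical core coincides with the paper's: both arguments come down to playing the variable-count bound (Theorem~\ref{th:vars}) against the failure of $D_m$ in $A_m$ (Theorem~\ref{th:Dn-misc}). The difference is in packaging. The paper's proof is a one-line citation of Theorem~\ref{th:Dn-strictly-stronger}, the strict increase of strength along the subsequence $D_3, D_7, D_{15}, \ldots$ on finite lattices; strictly speaking the reader must still supply the deduction that, given $n$, one chooses $2^k-1 \geq n$, takes a finite lattice separating $D_{2^k-1}$ from $D_{2^{k+1}-1}$, and observes via Theorem~\ref{th:Dn-weakly-stronger} that this lattice satisfies $D_n$ but not $D_\infty$. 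You instead inline the construction: for each $n$ the single lattice $A_{2n+1}$ separates $D_n$ from $D_\infty$, since $D_n$ needs only $2n < 2n+1$ variables and holds in all linear lattices, while $D_{2n+1}$ is a conjunct of $D_\infty$ that fails in $A_{2n+1}$. Your route is more direct and self-contained: it dispenses with the incidental $2^k-1$ indexing, produces an explicit counterexample for every $n$ rather than only along a subsequence, and it also takes care of the easy direction (that $D_\infty$ implies each $D_n$, including $n \leq 2$ via $D_3 \Rightarrow D_2 \Rightarrow D_1$), which the paper leaves implicit. What the paper's version buys is brevity and reuse of an already-stated theorem; what yours buys is a proof readable without unwinding that intermediate result.
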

\begin{proof}
\leavevmode

This is because the $D_n$ contain an endless sequence of strictly
increasingly strong statements on the class of finite
lattices.[theorem~\ref{th:Dn-strictly-stronger}]
\end{proof}

\begin{theorem}
The class of unbounded-order Arguesian lattices is
defined by identities and is self-dual.
\end{theorem}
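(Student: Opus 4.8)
The plan is to derive this directly from the facts already established for the individual laws $D_n$. By definition [\ref{def:higher}] the unbounded-order Arguesian law $D_\infty$ is the conjunction of all the $D_n$ with $n \geq 3$, so a lattice is unbounded-order Arguesian exactly when it satisfies every one of these laws. The whole task thus reduces to transferring the two asserted properties---equational definability and self-duality---from each $D_n$ to their infinite conjunction.

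First I would settle equational definability. By theorem~\ref{th:higher} each $D_n$ is a lattice identity (for instance in the $2n$-variable form of item~\ref{th:higher:i1}). Consequently the class of unbounded-order Arguesian lattices is precisely the class of algebras satisfying the set of identities consisting of the lattice axioms together with all the $D_n$ for $n \geq 3$. This set is infinite, but the notion of an equationally defined class admits an arbitrary, not necessarily finite, set of identities; hence the class is defined by identities.

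For self-duality I would invoke the self-dual form of each law. By theorem~\ref{th:higher} item~\ref{th:higher:i2}, every $D_n$ is equivalent to an identity that becomes its own dual under a suitable change of variables---the same device used to prove that each class of $n$-th order Arguesian lattices is self-dual. Selecting this self-dual form for each $n$ makes the entire defining set of identities self-dual, so a lattice satisfies the set if and only if its dual does; therefore the class is self-dual.

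I do not anticipate a real obstacle, as both halves follow formally from the per-$D_n$ results; the single point demanding care is conceptual rather than computational. One must not read ``defined by identities'' as ``finitely based'': by theorem~\ref{th:Dn-strictly-stronger} the $D_n$ are of strictly increasing strength on finite lattices, so no finite subcollection of them can define $D_\infty$. Equational definability is nevertheless guaranteed by the infinite set exhibited above, in contrast to the finitely based situation for each fixed $n$.
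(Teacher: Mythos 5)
Your proposal is correct and takes essentially the approach the paper intends: the paper in fact states this theorem without an explicit proof, because it follows at once from its proof of the fixed-$n$ theorem, which uses precisely your two ingredients---the defining identities are the lattice axioms together with the $D_n$ in their self-dual form [\ref{th:higher:i2}], and that (now infinite) set of identities is self-dual. Your closing caution distinguishing ``defined by identities'' from ``finitely based'' is also consistent with the paper, which records the failure of finite axiomatizability separately in theorem~\ref{th:uoa-infinite}.
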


\begin{theorem} \label{th:uoa-infinite} \cite{Haim1987}*{sec.~4}\cite{Haim1991}*{sec.~3 Cor.~5}
The class of unbounded-order Arguesian lattices cannot be defined by a
finite set of statements.
\end{theorem}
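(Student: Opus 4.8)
The plan is to argue by contradiction using a variable-counting argument built on the lattices $A_n$, which by the preparatory results serve as \emph{minimal} non-members of the class. Suppose the class of unbounded-order Arguesian lattices were defined by a finite set $\Sigma$ of statements (universal Horn sentences, of which identities are a special case). Each statement in $\Sigma$ involves only finitely many variables, so I would set $N$ to be the largest number of variables occurring in any single statement of $\Sigma$; then every statement of $\Sigma$ contains at most $N$ variables.

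First I would observe that every statement of $\Sigma$ holds in all linear lattices. Indeed, by theorem~\ref{th:Dn-misc} every linear lattice satisfies every $D_n$, hence satisfies $D_\infty$, so every linear lattice is unbounded-order Arguesian. Since $\Sigma$ defines this class, every linear lattice satisfies $\Sigma$, and in particular each individual statement of $\Sigma$ is valid in all linear lattices. Next I would fix an integer $n$ with $n > N$ and $n \geq 3$, and apply theorem~\ref{th:vars}: any statement with fewer than $n$ variables that is true in all linear lattices is true in $A_n$. Since each statement of $\Sigma$ has at most $N < n$ variables and, by the previous step, holds in all linear lattices, every statement of $\Sigma$ holds in $A_n$. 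Therefore $A_n$ satisfies $\Sigma$, so $A_n$ lies in the class defined by $\Sigma$; that is, $A_n$ would be unbounded-order Arguesian and would have to satisfy every $D_m$.

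This yields the contradiction: by theorem~\ref{th:Dn-misc}, $D_n$ fails in $A_n$, so $A_n$ is \emph{not} unbounded-order Arguesian. Hence no finite $\Sigma$ can define the class, which is the assertion of theorem~\ref{th:uoa-infinite}.

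The argument has no serious obstacle, since all the heavy lifting is already packaged in the hierarchy $\{D_n\}$ and the minimal non-members $A_n$; the present theorem is a short deduction from theorems~\ref{th:Dn-misc} and~\ref{th:vars}. The one point that genuinely requires care is that a ``statement'' here may be an implication rather than merely an identity, so I would want to invoke theorem~\ref{th:vars} in its general form, which applies to arbitrary universal Horn sentences and not just equations; that is exactly what makes the conclusion cover ``a finite set of statements'' rather than only a finite set of identities. One could alternatively phrase the conclusion as a compactness argument from the strictly increasing hierarchy of theorem~\ref{th:Dn-strictly-stronger}, but the direct route via $A_n$ is cleaner and self-contained.
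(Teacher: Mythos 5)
Your proof is correct and follows essentially the same route as the paper's: pick $N$ exceeding the variable count of every statement in the finite set, use theorem~\ref{th:vars} to conclude all the statements hold in $A_N$ (resp.\ your $A_n$ with $n > N$), and contradict this with the failure of $D_N$ in $A_N$ from theorem~\ref{th:Dn-misc}. The only difference is that you explicitly justify the hypothesis of theorem~\ref{th:vars} --- that each statement in the defining set must hold in all linear lattices, since linear lattices are unbounded-order Arguesian --- a step the paper's proof uses silently; making it explicit is a small improvement, not a different argument.
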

\begin{proof}
\leavevmode

Given any finite set of statements, there is an $N$ that is greater
than the number of variables in any of the statements.
By theorem~\ref{th:vars}, all of the statements are true in $A_N$.
But theorem~\ref{th:Dn-misc} shows that $A_N$ does not satisfy $D_N$,
and so the set of statements cannot characterize unbounded-order
Arguesian lattices.
\end{proof}

\paragraph{Linear}

\begin{quotation}
While there do exist modular lattices which are not isomorphic to
lattices of commuting equivalence relations, the only known examples
are of a more or less pathological nature, such as the lattice of all
subspaces of a non-Arguesian projective plane (\cite{Jons1953b}).
--- Bjarni J\'onsson \cite{Jons1959a}*{sec.~3}
\end{quotation}

\begin{definition} \label{def:linear}
A lattice is \emph{linear} if it has a type 1 representation
[\ref{def:type}] by a
sublattice of equivalence relations on a set (equivalently partitions of a
set).
\end{definition}

\begin{theorem} \cite{Jons1953b}*{sec.~1}
A representation of a lattice is type 1 iff
for every two elements $x, y$, their representative equivalence
relations $\phi(x), \phi(y)$ commute when composed as general binary
relations: $\phi(x) \circ \phi(y) = \phi(y) \circ \phi(x)$.
\end{theorem}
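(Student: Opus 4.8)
The plan is to reduce the statement to a single per-pair fact about equivalence relations and then verify that fact by elementary relation calculus (or simply cite it).

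First I would unwind the definition of type 1. By Definition~\ref{def:type}, the representation $\langle F, U\rangle$ is type 1 exactly when $F(x) + F(y) = F(x) \circ F(y)$ for all $x, y \in A$. Here the join $F(x) + F(y)$ is taken in the image lattice $F(A)$; because $F$ is a lattice isomorphism of $A$ onto a \emph{sublattice} of the equivalence relations on $U$, this join coincides with the join $F(x) \vee F(y)$ computed in the full lattice of equivalence relations (the transitive closure of $F(x) \cup F(y)$). Hence type 1 is equivalent to the condition $F(x) \vee F(y) = F(x) \circ F(y)$ holding for all $x, y$. Writing $R = F(x)$ and $S = F(y)$, the whole theorem reduces to the single assertion that, for equivalence relations $R, S$ on $U$,
$$ R \vee S = R \circ S \quad\Longleftrightarrow\quad R \circ S = S \circ R. $$

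For the forward direction I would argue that if $R \vee S = R \circ S$, then $R \circ S$ equals a join of equivalence relations and is therefore itself an equivalence relation, in particular symmetric. Since $R$ and $S$ are symmetric, $(R \circ S)^{-1} = S^{-1} \circ R^{-1} = S \circ R$, and symmetry of $R \circ S$ gives $R \circ S = S \circ R$. For the reverse direction I would assume $R \circ S = S \circ R$ and first check that $R \circ S$ is an equivalence relation: reflexivity follows from reflexivity of $R$ and $S$; symmetry follows from $(R \circ S)^{-1} = S \circ R = R \circ S$; and transitivity follows from $R \circ S \circ R \circ S = R \circ (R \circ S) \circ S = (R \circ R) \circ (S \circ S) = R \circ S$, using commutativity together with the idempotence $R \circ R = R$, $S \circ S = S$ of equivalence relations. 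Then I would squeeze: since $R, S \subseteq R \circ S$ (by reflexivity of the other factor) and $R \circ S$ is an equivalence relation, it contains the join $R \vee S$; conversely $R \vee S$ is transitive and contains $R \cup S$, so it contains $R \circ S$. Hence $R \vee S = R \circ S$.

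This per-pair equivalence is precisely the content of the Britz--Mainetti--Pezzoli theorem quoted earlier (\cite{BritzMainPezz2001}*{Th.~1}, the equivalence of its items (1) and (2)), so the elementary argument above may instead be replaced by a citation to it. I do not expect any genuine obstacle. The only mildly substantive step is the transitivity computation in the reverse direction, which is where commutativity and the idempotence of equivalence relations are actually used; the only point requiring care is the identification of the lattice join $F(x)+F(y)$ with the relational (transitive-closure) join $F(x)\vee F(y)$, which rests on $F(A)$ being a sublattice rather than merely a subset of the lattice of equivalence relations.
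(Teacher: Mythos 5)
Your proposal is correct. The paper offers no proof of its own here --- the statement is a survey item quoted from J\'onsson's Definition~1.1 and the remark following it, and the per-pair fact you isolate (for equivalence relations $R,S$: $R \vee S = R \circ S$ iff $R \circ S = S \circ R$) is exactly the Britz--Mainetti--Pezzoli theorem that the paper quotes immediately after this statement --- so your elementary relation-calculus argument, including the correct observation that the sublattice join coincides with the transitive-closure join precisely because the image is a sublattice, simply supplies in full the details that the paper and its sources leave to citation.
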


\begin{theorem} \cite{BritzMainPezz2001}*{Th.~1}
Let $R$ and $T$ be equivalence relations on a set $S$.
The following are equivalent:
\begin{enumerate}
\item $R$ and $T$ commute as binary relations;
\item The join of $R$ and $T$ within the lattice of equivalence
relations on $S$ is equal to their composition $R \circ T$;
\item $R \circ T$ is an equivalence relation.
\end{enumerate}
\end{theorem}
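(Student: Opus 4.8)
The plan is to treat the three statements as a short chain of implications using only elementary relational algebra, together with the facts that $R$ and $T$, being equivalence relations, are reflexive, symmetric, and idempotent under composition (so $R \circ R = R$ and $T \circ T = T$), and that $R \vee T$ is by definition the least equivalence relation containing both $R$ and $T$. Before starting the chain I would record three facts that hold for \emph{any} equivalence relations $R,T$. (a) $R \circ T$ is always reflexive, and always satisfies $R \subseteq R \circ T$ and $T \subseteq R \circ T$ (insert a reflexive pair on the missing side). (b) $R \circ T \subseteq R \vee T$ always, since $R \vee T$ contains $R$ and $T$ and is transitive. (c) Because $R$ and $T$ are symmetric, $(R \circ T)^{-1} = T^{-1} \circ R^{-1} = T \circ R$; hence $R \circ T$ is a symmetric relation if and only if $R \circ T = T \circ R$, that is, if and only if (1) holds.

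The substantive step is $(1) \Rightarrow (3)$. Assuming $R \circ T = T \circ R$, I would check that $R \circ T$ is an equivalence relation: it is reflexive by (a), symmetric by (c), and transitive by the computation
\[
(R \circ T) \circ (R \circ T) = R \circ (T \circ R) \circ T = R \circ (R \circ T) \circ T = (R \circ R) \circ (T \circ T) = R \circ T,
\]
where associativity is used silently, the middle equality uses the commuting hypothesis, and the last uses idempotence. The converse $(3) \Rightarrow (1)$ is then immediate: an equivalence relation is symmetric, so by (c) we get $R \circ T = (R \circ T)^{-1} = T \circ R$.

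Finally I would close the loop through (2). For $(3) \Rightarrow (2)$: if $R \circ T$ is an equivalence relation, then by (a) it contains both $R$ and $T$, hence contains their least upper bound $R \vee T$; combined with (b) this forces $R \circ T = R \vee T$. The implication $(2) \Rightarrow (3)$ is trivial, since $R \vee T$ is an equivalence relation by construction. Assembling these gives $(1) \Leftrightarrow (3)$ and $(2) \Leftrightarrow (3)$, hence the equivalence of all three.

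I expect the only real obstacle to be the transitivity verification in $(1) \Rightarrow (3)$; the displayed one-line collapse isolates exactly where commutativity is needed (to rewrite the internal $T \circ R$ as $R \circ T$), after which idempotence finishes it. Everything else — reflexivity, the symmetry characterization via (c), and the sandwiching $R \vee T \subseteq R \circ T \subseteq R \vee T$ — is routine bookkeeping that does not use the commuting hypothesis at all.
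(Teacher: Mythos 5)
Your proof is correct and complete. One thing to be aware of: the paper is a survey, and this statement is quoted verbatim from Britz, Mainetti, and Pezzoli (their Theorem~1) with no proof supplied in the paper itself, so there is no internal argument to compare yours against; what you have written is the standard self-contained argument that the cited source's readers are expected to know or reconstruct. Your three preliminary facts are all sound, and the structure $(1)\Leftrightarrow(3)$, $(2)\Leftrightarrow(3)$ closes the cycle correctly: the identity $(R \circ T)^{-1} = T \circ R$ for symmetric $R,T$ reduces symmetry of the composite to commutativity, the associativity--idempotence computation $(R \circ T) \circ (R \circ T) = (R \circ R) \circ (T \circ T) = R \circ T$ isolates exactly where the commuting hypothesis enters, and the sandwich $R \vee T \subseteq R \circ T \subseteq R \vee T$ (least-upper-bound property on one side, transitivity of $R \vee T$ on the other) settles the join condition. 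Note also that your argument is insensitive to the choice of composition convention, since swapping conventions merely exchanges the roles of the symmetric relations $R$ and $T$; this is worth a sentence if you write it up formally.
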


\begin{question} \label{q:finite-type-1} \cite{Wor2024a}
Given theorem~\ref{th:finite-lattice-finite-part},
is it true that every finite linear lattice has a
type-1 representation over a finite base set?
\end{question}

Obviously, an infinite lattice does not have a (faithful) linear
representation with a finite base set.  However, we can investigate:

\begin{definition} \cite{Wor2024a}
A linear representation $\phi$ of a lattice $L$ is \emph{finite-block} if for
every $x \in L$, $\phi(x)$ (looked at as a partition) consists of only
finite blocks.
\end{definition}

\begin{theorem}
If a finitary lattice $L$ has a finite-block linear representation
$\phi$ on a set $B$, then for every $x \in L$, $\phi$ restricted to the interval
$[\hatzero, x]$ can be restricted to a smaller base set to construct a
(faithful) representation $\phi_x$ over a finite base set.
\end{theorem}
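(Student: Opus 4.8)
The plan is to build the finite base set $A$ as a finite union of blocks of the partition $\phi(x)$, chosen just large enough to separate the (finitely many) elements of the interval, and then to define $\phi_x(y) = \phi(y) \cap (A \times A)$ for $y \in [\hatzero, x]$. First I would use the finitary hypothesis: $[\hatzero, x]$ is the principal ideal generated by $x$, hence finite, so there are only finitely many ordered pairs of distinct elements $y, y'$ in it. For each such pair, $\phi(y) \neq \phi(y')$ (as $\phi$ is an isomorphism), so some pair $(a,b) \in B \times B$ lies in exactly one of them; say $(a,b) \in \phi(y) \setminus \phi(y')$. Since $y \leq x$ forces $\phi(y) \leq \phi(x)$, the points $a$ and $b$ lie in a common block of $\phi(x)$. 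I would let $A$ be the union of all these witnessing blocks. Each block is finite because $\phi$ is finite-block, and there are finitely many of them, so $A$ is a finite set; moreover $A$ is by construction a union of entire blocks of $\phi(x)$.

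Next I would check that $\phi_x$ is a representation, i.e.\ an isomorphism of $[\hatzero, x]$ onto a sublattice of $\textnormal{Eq}(A)$. Injectivity is immediate, since each pair of distinct elements is separated by a witness lying inside $A$. Preservation of meets is automatic, because meet is intersection and intersection commutes with restriction to $A \times A$. An injective lattice homomorphism is an isomorphism onto its image, and that image is a sublattice of $\textnormal{Eq}(A)$, so the only substantive point remaining is the preservation of joins.

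The join is the heart of the argument, and it is where the type-1 and refinement hypotheses are essential. The containment $\phi_x(y) \vee \phi_x(y') \leq \phi_x(y \vee y')$ is trivial from monotonicity of restriction. For the reverse, suppose $a, c \in A$ and $(a,c) \in \phi(y \vee y')$. Since $\phi(y \vee y') \leq \phi(x)$, the points $a$ and $c$ lie in a single block $C$ of $\phi(x)$; because $A$ is a union of whole blocks and $a \in A$, that block satisfies $C \subseteq A$. Now, as $\phi$ is type-1, $\phi(y) \vee \phi(y') = \phi(y) \circ \phi(y')$, so there is an intermediate $b$ with $(a,b) \in \phi(y)$ and $(b,c) \in \phi(y')$. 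From $(a,b) \in \phi(y) \leq \phi(x)$ and $a \in C$ we get $b \in C \subseteq A$, so the connecting path stays inside $A$. Hence $(a,c) \in \phi_x(y) \circ \phi_x(y') \subseteq \phi_x(y) \vee \phi_x(y')$, and in fact this same computation shows $\phi_x$ is again type-1.

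The main obstacle, and the only place the hypotheses genuinely enter, is precisely this join step: for an arbitrary subset $A$ the restriction of a join can strictly exceed the join of the restrictions, since connecting paths may wander outside $A$. What rescues the argument is the combination of two facts: the type-1 hypothesis realizes every join as a single composition rather than an unbounded transitive closure, and the fact that every element of $[\hatzero, x]$ refines $\phi(x)$ confines that single composition to one block of $\phi(x)$, which by construction lies wholly within $A$. Together these yield the faithful (indeed type-1) representation $\phi_x$ over the finite base set $A$.
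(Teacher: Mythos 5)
Your proposal is correct and follows essentially the same route as the paper: both build the finite base set as a union of finitely many whole blocks of $\phi(x)$, chosen (via finitariness of $[\hatzero,x]$ and faithfulness of $\phi$) to contain a separating witness for each pair of distinct elements of the interval, and then restrict $\phi$ to that union. The only presentational difference is that the paper packages the verification through its sum-of-representations decomposition $\phi = \bigoplus_i \phi_i$ over the blocks of $\phi(x)$, whereas you check directly that restriction preserves meets and joins --- your explicit type-1/block-confinement argument for joins is precisely the detail the paper leaves implicit in asserting that each per-block restriction is again a linear representation.
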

\begin{proof}
\leavevmode

Consider the value $\phi(x)$.  Since $\phi$ is finite-block, it is
composed of disjoint finite blocks, which we call $B_i$ for $i$
in some index set $I$.
Every block of every $\phi(y)$ for $y \in
[\hatzero, x]$ is contained in some block $B_i$.
This means that $\phi$ can be decomposed into the sum of a set of (not
necessarily faithful) linear representations $\phi_i$ which are $\phi$
restricted to $B_i$:
\begin{align*}
\phi & = \bigoplus_{i \in I} \phi_i \\
B & = \bigcup_{i \in I} B_i
\end{align*}
Because $\phi$ is faithful,
for every one of the finite set of pairs $y, z \in [\hatzero, x]$ there is at
least one $i \in I$ for which $\phi(y) \neq \phi(z)$.  We call this
$i = i_{y,z}$ and we call the set of all $i_{y,z}$ (which may not be distinct)
$I_x$, which is finite.  Set
\begin{align*}
\phi_x & = \bigoplus_{i \in I_x} \phi_i \\
B_x & = \bigcup_{i \in I_x} B_i
\end{align*}
By construction, $\phi_x$ is a linear representation of $[\hatzero, x]$.
Because $I_x$ is finite and each $B_i$ is finite, $B_x$ is finite.
Because for every $y, z \in [\hatzero, x]$, $I_x$ contains the index
of a $\phi_i$ that distinguishes $y$ and $z$, $\phi_x$ is faithful.
\end{proof}

\begin{question} \label{q:finitary-block}
Does every finitary linear lattice have a finite-block representation?
\end{question}

\begin{theorem} {}[\ref{th:Dn-misc}]\cite{Haim1985a}*{sec.~2.0 Exam.~2.5}
A linear lattice satisfies all of the properties $D_n$ and so is
Arguesian and modular.
\end{theorem}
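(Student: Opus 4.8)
The plan is to work directly with a type~1 representation $\phi\colon L \to \mathrm{Eq}(S)$ supplied by Definition~\ref{def:linear}, translate each lattice polynomial occurring in $D_n$ [\ref{th:higher:i1}] into a relation-theoretic expression, and then verify the resulting inclusion of equivalence relations by a ``chase'' that exhibits explicit witnessing points. Since $\phi$ is a lattice isomorphism onto its image, the inequality $\mathrm{LHS} \le \mathrm{RHS}$ holds in $L$ if and only if the inclusion $\phi(\mathrm{LHS}) \subseteq \phi(\mathrm{RHS})$ of relations holds, so it suffices to reason with relations on $S$.

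First I would record the two translation rules: meets become intersections, $\phi(u \wedge v) = \phi(u) \cap \phi(v)$, and joins become compositions, $\phi(u \vee v) = \phi(u) \circ \phi(v)$, the latter being exactly the type~1 condition. I would then promote the join rule to finite joins. Because the image of $\phi$ is closed under join, $\phi(u)\circ\phi(v) = \phi(u \vee v)$ is again in the image and hence commutes with every $\phi(w)$; by induction this gives $\phi\!\left(\bigvee_i u_i\right) = \phi(u_1)\circ\cdots\circ\phi(u_k)$ for any finite family. This step deserves explicit care: it is the closure of the image under composition, not merely pairwise commuting of the generators, that makes an iterated join collapse to an iterated composition.

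With these rules, abbreviating $a_i,b_i$ for $\phi(a_i),\phi(b_i)$ and writing $c_i = (a_i\circ a_{i+1})\cap(b_i\circ b_{i+1})$, the two sides become $\mathrm{LHS} = a_n \cap \bigl(\bigl[\bigcap_{i=1}^{n-1}(a_i\circ b_i)\bigr]\circ b_n\bigr)$ and $\mathrm{RHS} = a_1 \circ \bigl(\bigl[c_1\circ\cdots\circ c_{n-1}\bigr]\cap(b_1\circ b_n)\bigr)$. I would take an arbitrary pair $(p,q)\in\mathrm{LHS}$ and unpack it: there is a point $r$ with $(p,r)\in\bigcap_i(a_i\circ b_i)$ and $(r,q)\in b_n$, and for each $i$ a point $s_i$ with $(p,s_i)\in a_i$ and $(s_i,r)\in b_i$; moreover $(p,q)\in a_n$. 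To land in $\mathrm{RHS}$ I would choose the witness $t = s_1$, so that $(p,t)\in a_1$, and exhibit the path $s_1, s_2, \ldots, s_{n-1}, q$ showing $(s_1,q)\in c_1\circ\cdots\circ c_{n-1}$: each interior step $(s_i,s_{i+1})$ lies in $a_i\circ a_{i+1}$ (routed through $p$, using symmetry) and in $b_i\circ b_{i+1}$ (routed through $r$), while the final step $(s_{n-1},q)$ lies in $a_{n-1}\circ a_n$ (through $p$) and in $b_{n-1}\circ b_n$ (through $r$); separately $(s_1,q)\in b_1\circ b_n$ (through $r$). Together these place $(p,q)$ in $\mathrm{RHS}$, establishing $D_n$.

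The hard part is not the verification of any single membership, each of which is routine given symmetry of the equivalence relations and the intermediate points $p$ and $r$, but rather fixing the bookkeeping of the chase: identifying the correct witness $t=s_1$ and the correct path $s_1,\dots,s_{n-1},q$ so that every factor of the composition on the right is supplied simultaneously. Once every $D_n$ is proved, the final clause is immediate: as recorded earlier, $D_2$ is the modular law and $D_3$ the Arguesian law, so a linear lattice, satisfying all the $D_n$, is in particular both modular and Arguesian.
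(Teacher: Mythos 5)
Your proof is correct, and it is a genuinely different route from the paper's: the paper offers no argument at all for this theorem, but simply cites Haiman (Example 2.5 of ``Proof theory for linear lattices''), where the higher Arguesian identities $D_n$ are established by Haiman's diagrammatic proof theory, in which proofs are encoded as maps of series-parallel graphs. What you have written is a self-contained, elementary verification: translate meets to intersections and joins to compositions (correctly noting that the collapse of an \emph{iterated} join to an iterated composition uses closure of the image of $\phi$ under join, so that the type~1 condition applies to compound elements, not just generators), then chase an arbitrary pair $(p,q)$ of the left-hand relation into the right-hand relation via the witness $t=s_1$ and the path $s_1,s_2,\dots,s_{n-1},q$, routing the $a$-memberships through $p$ and the $b$-memberships through $r$ using symmetry. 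I checked each membership claim and the bookkeeping is right, including the boundary step $(s_{n-1},q)\in(a_{n-1}\circ a_n)\cap(b_{n-1}\circ b_n)$ and the side condition $(s_1,q)\in b_1\circ b_n$; the final reduction to ``modular and Arguesian'' legitimately invokes the paper's earlier statement that $D_2$ and $D_3$ are the modular and Arguesian laws. It is worth observing that your chase is essentially the combinatorial content that Haiman's graphical calculus formalizes --- the witnessing points $p$, $r$, $s_i$ are the vertices of his series-parallel graphs --- so your argument can be read as the ``unrolled'' special case of his machinery: his framework buys a general proof system (with normal forms, duality phenomena, and the means to generate further identities), while your direct verification buys transparency and independence from any external apparatus, which is arguably preferable for a survey reader who only wants this one theorem.
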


\begin{definition} \cite{Haim1985a}*{sec.~0.0}
Let $\textnormal{Eq } S$ denote the lattice of equivalence relations of
the set $S$ (which are subsets of $S \times S$).
Let $\rho: L \rightarrow \textnormal{Eq } S$ and $\rho^\prime: L^\prime
\rightarrow \textnormal{Eq } S^\prime$ be linear representations of
lattices $L$ and $L^\prime$.

If $S \cap S^\prime = \zeroslash$, the \emph{sum} $\rho \oplus \rho^\prime:
L \times L^\prime \rightarrow \textnormal{Eq } (S \cup S^\prime)$ is defined as
$\rho \oplus \rho^\prime((x, x^\prime)) = \rho(x) \cup
\rho^\prime(x^\prime)$.  If $L =
L^\prime$ we also refer to $(\rho \oplus \rho^\prime) \circ \Delta: L
\rightarrow \textnormal{Eq } (S \cup S^\prime)$,
where $\Delta(x) = (x, x)$ [the minimum,
discrete, or identity equivalence relation], as the
\emph{sum} of the representations $\rho$, $\rho^\prime$ of $L$.

The \emph{product} $\rho \otimes \rho^\prime: 
L \times L^\prime \rightarrow \textnormal{Eq } (S \times S^\prime)$ is
defined as
$\rho \otimes \rho^\prime((x, x^\prime)) =
\{ ((a,a^\prime), (b,b^\prime)) : (a, b) \in \rho(x) \TNand
(a^\prime,b^\prime) \in \rho^\prime(x^\prime) \}$.
If $L =
L^\prime$ we also refer to $(\rho \otimes \rho^\prime) \circ \Delta: L
\rightarrow \textnormal{Eq } (S \times S^\prime)$, as the
\emph{product} of the representations $\rho$, $\rho^\prime$ of $L$.

Sums and products
of arbitrary finite or infinite collections of representations are
defined analogously.
\end{definition}

\begin{theorem} \label{th:sum-prod}
The sum and product of linear representations are linear representations.
\end{theorem}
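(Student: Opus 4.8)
The plan is to handle the sum $\rho\oplus\rho'$ and the product $\rho\otimes\rho'$ separately, and in each case to verify the four ingredients of a type 1 representation of $L\times L'$: (a) every element is sent to an equivalence relation on the relevant base set; (b) meets are preserved; (c) the images of any two elements commute under $\circ$; and (d) the map is injective. Ingredient (c) is the type 1 condition itself, and by the previously stated theorem of Britz, Mainetti, and Pezzoli (two equivalence relations commute iff their composition equals their join), proving (c) simultaneously proves that joins are preserved. Together, (b), (c), and (d) show the map is a lattice isomorphism onto a sublattice of $\textnormal{Eq}$, so it is a representation, and (c) makes it type 1.

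For the sum $\rho\oplus\rho'$ on the disjoint union $S\cup S'$, everything reduces to disjointness: a relation $\rho(x)\cup\rho'(x')$ has its restriction to $S\times S$ equal to $\rho(x)$, its restriction to $S'\times S'$ equal to $\rho'(x')$, and no mixed pairs. First I would note each $\rho(x)\cup\rho'(x')$ is an equivalence relation on $S\cup S'$, since reflexivity, symmetry, and transitivity each localize to one side. Intersection then factors as $(\rho(x)\cap\rho(y))\cup(\rho'(x')\cap\rho'(y'))$, giving meet preservation; and composition factors as $(\rho(x)\circ\rho(y))\cup(\rho'(x')\circ\rho'(y'))$ because the cross terms are empty, which by type 1 of $\rho,\rho'$ equals $\rho(x\vee y)\cup\rho'(x'\vee y')$, proving both commuting and join preservation at once. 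Injectivity follows by restricting an equality $\rho(x)\cup\rho'(x')=\rho(y)\cup\rho'(y')$ to each side.

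For the product $\rho\otimes\rho'$ on $S\times S'$, the structural fact I would isolate first is that the product-relation construction $P\boxtimes Q:=\{((a,a'),(b,b')):(a,b)\in P,\ (a',b')\in Q\}$ respects both intersection and composition coordinatewise: $(P_1\boxtimes Q_1)\cap(P_2\boxtimes Q_2)=(P_1\cap P_2)\boxtimes(Q_1\cap Q_2)$ and $(P_1\boxtimes Q_1)\circ(P_2\boxtimes Q_2)=(P_1\circ P_2)\boxtimes(Q_1\circ Q_2)$, the latter being a one-line existential chase on the intermediate point. Since $\rho\otimes\rho'((x,x'))=\rho(x)\boxtimes\rho'(x')$, meet preservation is immediate from the intersection identity. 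For commuting, the composition identity reduces $\rho\otimes\rho'((x,x'))\circ\rho\otimes\rho'((y,y'))$ to $(\rho(x)\circ\rho(y))\boxtimes(\rho'(x')\circ\rho'(y'))$; because $\rho,\rho'$ are type 1 their factors commute and each composition equals a join, so this is symmetric in the two arguments and equals $\rho(x\vee y)\boxtimes\rho'(x'\vee y')=\rho\otimes\rho'((x,x')\vee(y,y'))$. The cited characterization then upgrades commuting to join preservation. Injectivity uses reflexivity of the complementary factor to embed each coordinate relation into the product (fixing an element of the nonempty complementary base set), recovering $\rho(x)=\rho(y)$ and $\rho'(x')=\rho'(y')$.

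The main obstacle is join preservation for the product: unlike meets, and unlike the sum, the join of two product relations is genuinely not computed coordinatewise in general, so the equality $\rho\otimes\rho'((x,x'))\vee\rho\otimes\rho'((y,y'))=\rho\otimes\rho'((x\vee y,x'\vee y'))$ fails without the commuting hypothesis; the whole point of type 1 is that it lets us replace the opaque transitive-closure join by the tractable composition, which does factor coordinatewise. Finally, I would remark that the cases where $L=L'$ and one postcomposes with the diagonal $\Delta(x)=(x,x)$, as well as sums and products of arbitrary families, follow by the same computations, since $\Delta$ is an injective lattice homomorphism and both the type 1 property and faithfulness are inherited by restriction to a sublattice of the image.
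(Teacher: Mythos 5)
Your proposal is correct, but there is nothing in the paper to compare it against: the paper states Theorem~\ref{th:sum-prod} with no proof at all, essentially echoing Haiman's bare assertion (quoted in the survey from \cite{Haim1985a}*{sec.~0.0}) that the sum construction ``is also a linear representation.'' So your argument fills in what the paper treats as routine, and it does so correctly. The structure is the natural one: verify that each image is an equivalence relation, that meets (intersections) are preserved, that images commute under composition, and that the map is injective, then invoke the Britz--Mainetti--Pezzoli characterization (which the paper does state) so that exhibiting the composition as the image of the join yields commutativity and join preservation in one stroke. For the sum, everything factors across the disjoint union because cross terms in both intersections and compositions are empty. For the product, your key lemma --- that the construction $P\boxtimes Q$ commutes with both intersection and relational composition coordinatewise --- is exactly the right structural fact, and your remark about the main obstacle is the genuinely non-routine observation: joins of product relations are \emph{not} computed coordinatewise in general, so the type~1 hypothesis is essential in that it lets you replace the transitive-closure join by composition, which does factor. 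The injectivity arguments (restriction to each side of the disjoint union; fixing a base point in the complementary factor for the product) are sound, the latter implicitly assuming nonempty base sets, which is the only case of interest since a faithful representation on an empty set exists only for the one-element lattice. Your closing remark disposing of the diagonal case $(\rho\oplus\rho')\circ\Delta$ and of infinite families is also correct, since injectivity, homomorphism, and the commuting property all pass to composites with the diagonal embedding.
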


\begin{theorem} \label{th:linear-subd} \cite{Jons1959a}\cite{Haim1984a}*{Ch.~II}\linebreak[0]\cite{Haim1985a}*{sec.~1.0}\linebreak[0]\cite{NatPick1987}\linebreak[0]\cite{Nat2017}*{ch.~4}
The class of linear lattices is closed under direct products and
taking sublattices.
\end{theorem}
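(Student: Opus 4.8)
The plan is to treat the two closure properties separately, handling sublattices by a direct restriction argument and deferring the product case to the sum-of-representations construction already recorded in Theorem~\ref{th:sum-prod}. For sublattices, suppose $L$ is linear, witnessed by a type~1 representation $\phi\colon L \to \textnormal{Eq } S$ in the sense of Definitions~\ref{def:linear} and~\ref{def:type}, and let $M$ be a sublattice of $L$. Because $M$ is a sublattice, the meet and join of any $x, y \in M$ computed in $M$ coincide with those computed in $L$; hence the restriction $\phi|_M$ is still an injective, join- and meet-preserving map, i.e.\ an isomorphism of $M$ onto a sublattice of $\textnormal{Eq } S$. Moreover the defining type~1 identity $\phi(x) \vee \phi(y) = \phi(x) \circ \phi(y)$ holds for \emph{all} $x, y \in L$, so in particular for all $x, y \in M$; thus $\phi|_M$ is a type~1 representation of $M$, and $M$ is linear. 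This step carries no real obstacle.

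For closure under products I would reduce to the sum construction. Given linear lattices $L$ and $L'$ with type~1 representations $\phi\colon L \to \textnormal{Eq } S$ and $\phi'\colon L' \to \textnormal{Eq } S'$, I would first relabel so that $S \cap S' = \emptyset$ (replacing $S'$ by a disjoint copy), which changes neither the lattice $L'$ nor the type of its representation. The sum $\phi \oplus \phi'\colon L \times L' \to \textnormal{Eq }(S \cup S')$ is then defined, and by Theorem~\ref{th:sum-prod} it is a linear representation of $L \times L'$; hence $L \times L'$ is linear. Finite products follow by iteration, and arbitrary (possibly infinite) products follow by taking the sum of the whole family over pairwise-disjoint base sets, exactly the construction that Theorem~\ref{th:sum-prod} asserts is again linear.

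The only point requiring genuine care --- and the one I would expect to be the crux of Theorem~\ref{th:sum-prod} rather than of this theorem --- is verifying that the sum really is a \emph{type~1} representation of the product, and not merely a faithful representation. The essential observation is that when the base sets are disjoint, each $\phi_i(x_i)$ relates only elements lying within $S_i$, so meet, join, and relational composition of the summed relations all decompose coordinatewise: $\left(\bigcup_i \phi_i(x_i)\right) \circ \left(\bigcup_i \phi_i(y_i)\right) = \bigcup_i \bigl(\phi_i(x_i) \circ \phi_i(y_i)\bigr)$, and the type~1 property of each summand then propagates to the sum. Granting Theorem~\ref{th:sum-prod}, which already encapsulates this computation, the present theorem follows immediately.
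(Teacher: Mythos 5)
Your proposal is correct and follows essentially the same route as the paper, which disposes of sublattices as trivial (your restriction argument is exactly what that means) and invokes the sum-of-representations construction (Theorem~\ref{th:sum-prod}) for direct products, including infinite ones. The only difference is that the paper also notes, parenthetically, that the product-of-representations construction would serve equally well for the product case; otherwise your write-up is the same argument with the details of disjointification and the coordinatewise decomposition of composition made explicit.
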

\begin{proof}
\leavevmode

Closure under sublattices is trivial.  Closure under direct products
is demonstrated by the sum of linear representations (and also by the
product of linear representations).
\end{proof}

\begin{question} \label{q:linear-variety} \cite{Jons1953b}*{sec.~5}\linebreak[0]\cite{Jons1959a}*{sec.~4 Prob.~1}\linebreak[0]\cite{Haim1984a}*{Ch.~II}\linebreak[0]\cite{Haim1985a}*{sec.~1.0}\linebreak[0]\cite{Nat1994}\linebreak[0]\cite{Nat2017}*{ch.~4}
Is the class of linear lattices closed under homomorphic images (which
by theorems~\ref{th:linear-subd} and~\ref{th:birk-variety} is
equivalent to being a variety and to being definable by
identities)?
\end{question}

\begin{question} \label{q:linear-self-dual} \cite{Haim1985a}*{sec.~1.0}
Is the class of linear lattices self-dual?
\end{question}

\begin{question} \label{q:linear-strictly-stronger}
Is the linear condition strictly stronger than the unbounded-order
Arguesian condition?
That is, are there unbounded-order Arguesian lattices that are not
linear?
If so, does there exist such a finite lattice?
(If not, then questions~\ref{q:linear-variety}
and~\ref{q:linear-self-dual} are true.)
\end{question}

\begin{theorem} {}[parallel to theorem~\ref{th:uoa-infinite}]\cite{Haim1987}*{sec.~4}\cite{Haim1991}*{sec.~3 Cor.~5}\cite{Nat1994}
The class of linear lattices is not definable by a finite set of
sentences.
\end{theorem}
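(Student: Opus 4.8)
The plan is to mimic the proof of theorem~\ref{th:uoa-infinite} essentially verbatim, using the lattices $A_n$ as a "diagonal" family of witnesses. First I would argue by contradiction: suppose the class of linear lattices were defined by some finite set $\Sigma$ of sentences. Since $\Sigma$ is finite and each of its members involves only finitely many variables, I can fix an integer $N$ strictly larger than the number of variables occurring in any sentence of $\Sigma$.

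Next I would observe that every sentence of $\Sigma$ holds in all linear lattices — this is immediate, since $\Sigma$ is by hypothesis a defining set for that class. As each such sentence has fewer than $N$ variables, theorem~\ref{th:vars} guarantees that every member of $\Sigma$ holds in $A_N$. Hence $A_N$ satisfies all of $\Sigma$, and so, again by the assumption that $\Sigma$ defines linearity, $A_N$ must itself be a linear lattice. This is where the contradiction is sprung: by theorem~\ref{th:Dn-misc} the identity $D_N$ fails in $A_N$, whereas $D_N$ holds in every linear lattice; therefore $A_N$ is \emph{not} linear. The contradiction shows no finite $\Sigma$ can define the class, which is the assertion.

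The argument carries no genuine lattice-theoretic obstacle, since all the substantive content is already packaged in the properties of $A_n$ recorded in theorem~\ref{th:Dn-misc} (each $A_N$ is non-linear, yet all of its proper sublattices are linear, which is exactly what makes theorem~\ref{th:vars} bite). The one point I would be careful to check is that theorem~\ref{th:vars} applies to universal Horn sentences and not only to bare identities: the word ``statement'' there is meant to subsume the implications of the present section, so a finite set of \emph{sentences} is covered with no gap. For the linear case this is in fact even cleaner than for theorem~\ref{th:uoa-infinite}, because the members of $\Sigma$ are true in all linear lattices \emph{directly}, rather than by first passing through the inclusion of the linear class into a larger one.
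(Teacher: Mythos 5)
Your proof is correct and is exactly the argument the paper intends: the statement is flagged as ``parallel to theorem~\ref{th:uoa-infinite}'', and your adaptation of that proof (choose $N$ larger than the variable count of any sentence in $\Sigma$, apply theorem~\ref{th:vars} to conclude $A_N \models \Sigma$, then contradict with the failure of $D_N$ in $A_N$ from theorem~\ref{th:Dn-misc}) is precisely that parallel argument. Your closing observation is also apt: here the sentences of $\Sigma$ are true in all linear lattices by hypothesis, so theorem~\ref{th:vars} applies directly, without the implicit step (needed for theorem~\ref{th:uoa-infinite}) that linear lattices satisfy all $D_n$ and hence lie inside the class being defined.
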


\begin{theorem} \label{th:linear-sent} \cite{Jons1959a}*{sec.~3 Th.~2}\cite{Haim1985a}\cite{NatPick1987}\cite{Haim1991}*{sec.~1}
There is an infinite
system of universal Horn sentences characterizing linear lattices.
Specifically, in order for a lattice $A$ to be linear, it is necessary and
sufficient that the following condition be satisfied:

Suppose $n$ is a positive integer, $a_0, a_1, \cdots, a_{2n},
z \in A$ and, for each
positive integer $k \leq n$, $\phi(k)$ and $\psi(k)$ are natural
numbers with $\phi(k) \leq k$ and $\phi(k) \leq k$. Let
$$ b_{1,0,1} = b_{1,1,0} = a_0,\ b_{1,0,0} = b_{1,1,1} = z, $$
and for $k = 1,2, \cdots, n$ and $i, j = 0, 1, \cdots , k$ let
\begin{align*}
b_{k+1,i,j} & = \bigwedge_{p \leq k} (b_{k,i,p} \vee b_{k,p,j}), \\
b_{k+1,i,k+1} & = b_{k+1,k+1,i} = (b_{k+1,i,\phi(k)} \vee a_{2k-1})
\wedge (b_{k+1,i,\psi(k)} \vee a_{2k}), \\
b_{k+1.k+1,k+1} & = z.
\end{align*}
With these definitions, if $b_{k,\phi(k),\psi(k)} \leq a_{2k-1} \vee a_{2k}$
for $k = 1, 2, \cdots, n$, then $a_0 \leq b_{n+1, 0, 1}$.
\end{theorem}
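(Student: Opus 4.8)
The plan is to prove the two implications separately: \emph{necessity} (every linear lattice satisfies each sentence $(\Gamma')$) and \emph{sufficiency} (every lattice satisfying all the sentences $(\Gamma')$ is linear). Here $(\Gamma')$ is really a schema, one Horn sentence for each choice of $n$ and each pair of index functions $\phi,\psi$, so the ``infinite system'' is indexed by $(n,\phi,\psi)$. The structural fact to exploit throughout is that in a type-1 representation $F$ the join of two relations is their composition, $F(x \vee y) = F(x) \circ F(y)$, while meet is intersection, $F(x \wedge y) = F(x) \cap F(y)$.

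For \emph{necessity}, I would fix a type-1 representation $\langle F, U \rangle$ of a linear lattice and abbreviate $R_i = F(a_i)$ and $Z = F(z)$. Using join $=$ composition, the recursively defined terms become concrete composite relations: $F(b_{k+1,i,j}) = \bigcap_{p \leq k} \bigl( F(b_{k,i,p}) \circ F(b_{k,p,j}) \bigr)$, with $F(b_{k+1,i,k+1}) = \bigl( F(b_{k+1,i,\phi(k)}) \circ R_{2k-1} \bigr) \cap \bigl( F(b_{k+1,i,\psi(k)}) \circ R_{2k} \bigr)$ and the diagonal term equal to $Z$. Each hypothesis $b_{k,\phi(k),\psi(k)} \leq a_{2k-1} \vee a_{2k}$ becomes the inclusion $F(b_{k,\phi(k),\psi(k)}) \subseteq R_{2k-1} \circ R_{2k}$, i.e.\ a guarantee that certain pairs factor through a witness in $U$, and the conclusion $a_0 \leq b_{n+1,0,1}$ becomes $R_0 \subseteq F(b_{n+1,0,1})$. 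I would prove the latter by starting from an arbitrary pair $(u,v) \in R_0$ and inducting on $k$, at each stage using the hypothesis to insert a witness and assemble an array of points of $U$ certifying membership of $(u,v)$ in the composite $F(b_{k,i,j})$; the commuting of the relations is exactly what makes the inserted witnesses compatible across the two factorizations defining $b_{k+1,i,k+1}$. This direction is essentially a witness-chasing bookkeeping argument and should go through once the indexing is set up correctly.

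The real work is \emph{sufficiency}: given an abstract lattice $A$ satisfying every $(\Gamma')$, construct a type-1 representation. I would build the base set $U$ from formal ``paths'' over $A$ — sequences recording a chain of joins realized as compositions — and define $F(x)$ as the equivalence relation generated by linking two paths that differ by a step below $x$. Realizing each join as an actual composition forces new witness points whenever a pair lies below $x \vee y$, and iterating this closure produces the recursive scaffold whose nodes are precisely named by the terms $b_{k,i,j}$ — which is why the theorem is stated in terms of exactly those terms. Two competing demands must then be reconciled: the construction must identify enough points to make joins equal compositions (type-1), yet few enough that $F$ stays \emph{faithful} and preserves meet and join. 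Each forced identification during the closure matches the hypotheses of some instance of $(\Gamma')$, and the conclusion $a_0 \leq b_{n+1,0,1}$ is precisely the inequality that must already hold in $A$ for that forced membership to be consistent with the lattice order — i.e.\ for $F$ not to collapse two distinct elements. Verifying that this list of ``no unwanted collapse'' conditions is \emph{complete} — that satisfying all of them lets the closure terminate with a faithful, meet- and join-preserving, commuting representation — is the main obstacle and the technical heart of the theorem.

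As a sanity check on the shape of the argument, I would note that linear lattices are closed under sublattices and direct products (Theorem~\ref{th:linear-subd}) and, as recorded in the survey, under direct limits, so by the implicational characterization cited earlier the class is \emph{some} quasivariety and hence \emph{is} axiomatized by \emph{some} set of universal Horn sentences. This guarantees in advance that an explicit list such as $(\Gamma')$ can exist; the content of the theorem, beyond what the abstract closure argument delivers, is the explicit combinatorial construction showing that this particular list suffices. I would therefore expect to spend nearly all of the effort on the faithfulness step of the representation, leaning on J\'onsson's original path construction (\cite{Jons1959a}) rather than reinventing the base set.
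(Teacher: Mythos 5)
The first thing to note is that the paper does not prove this theorem at all: it is a survey, and Theorem~\ref{th:linear-sent} is J\'onsson's Theorem~2 from \cite{Jons1959a}, quoted essentially verbatim (with $+,\cdot$ rewritten as $\vee,\wedge$); the proof lives entirely in the cited paper. So your attempt can only be measured against J\'onsson's original argument. Your necessity direction is sound and essentially complete in outline: under a type-1 representation join becomes composition and meet becomes intersection, and the induction you describe --- maintain points $x_0,\dots,x_k$ with $(x_i,x_j)\in F(b_{k,i,j})$, use the $k$-th hypothesis to adjoin a witness $x_{k+1}$, then verify the three recursion clauses --- does go through, using reflexivity of the $F$-images for the diagonal terms and the symmetry $b_{k,i,j}=b_{k,j,i}$ for the bookkeeping.

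The genuine gap is the sufficiency direction, and you in effect concede it: what you give is a description of what a proof would have to accomplish (a base set of formal points, witnesses adjoined to force joins to equal compositions, with $(\Gamma')$ as exactly the condition preventing the closure from collapsing distinct lattice elements), ending with the admission that verifying completeness of the ``no unwanted collapse'' conditions is ``the main obstacle and the technical heart of the theorem.'' That verification \emph{is} the theorem; deferring it to ``J\'onsson's original path construction'' means the hard half is cited, not proved. Separately, your sanity check is circular within this paper's logical structure: the survey's proof that linear lattices are closed under direct limits rests on the fact that linearity is axiomatized by Horn sentences each depending on finitely many hypotheses --- i.e., on the very theorem you are proving --- and Haiman (as quoted in the survey) states explicitly that no direct construction of a representation for a direct limit is known. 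So only closure under sublattices and products is available independently, and by Gr\"atzer's criterion quoted in the paper (which also requires closure under direct limits) that alone does not guarantee in advance that any Horn axiomatization exists.
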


\begin{question} \label{q:linear-implication}
Which of the sentences defined by theorem~\ref{th:linear-sent} are
implied by one or more of the laws $D_n$?
Conversely, which of the laws $D_n$ are implied by one or more of the
sentences of theorem~\ref{th:linear-sent}?
\end{question}

\begin{theorem} \cite{NatPick1987}*{Proof of Lem.~4}
\leavevmode

\begin{enumerate}
\item For a fixed lattice $L$, the type 1 representations of $L$ form an
elementary class\cite{WikiEl} of relational structures.
\item \cite{Jons1959a} A lattice $L$ is linear if
and only if every finitely generated sublattice of $L$ is linear.
\item If every countable sublattice of $L$ is linear, $L$ is.
\item \cite{Jons1959a} If a countable lattice $L$ is linear,
it has a type 1 representation with a countable base set.
\end{enumerate}
\end{theorem}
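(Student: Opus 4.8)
The four parts are tightly linked: the plan is to prove (1) directly by a first-order axiomatization, and then to read off (2), (3), and (4) from (1) using the compactness and L\"owenheim--Skolem theorems.

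For (1), I would fix a relational language with one binary relation symbol $R_x$ for each $x \in L$, and identify a prospective type 1 representation $\langle F, U \rangle$ with the structure $\langle U; (R_x)_{x \in L} \rangle$ in which $R_x$ is read as $F(x)$. Let $T$ be the theory asserting, for the appropriate elements and pairs of $L$: that each $R_x$ is an equivalence relation; that $R_x$ and $R_y$ commute, $R_x \circ R_y = R_y \circ R_x$; that $R_{x \wedge y} = R_x \cap R_y$; that $R_{x \vee y} = R_x \circ R_y$; and that $R_x \neq R_y$ whenever $x \neq y$ (faithfulness). Each clause is first-order, composition and its commutativity being expressible with a single existential quantifier. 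The delicate point is the join clause: in $\textnormal{Eq}\,U$ the join is \emph{a priori} a transitive closure, which no first-order sentence captures. Here I would invoke the cited theorem of Britz, Mainetti, and Pezzoli: once the commuting clause is imposed, $R_x \circ R_y$ is itself an equivalence relation and coincides with the lattice join of $R_x$ and $R_y$. Hence the first-order composition clause correctly forces $F$ to preserve joins, and the models of $T$ are \emph{exactly} the type 1 representations of $L$, so these form an elementary class.

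For (2), the forward direction is immediate: by closure under sublattices (Theorem~\ref{th:linear-subd}) a type 1 representation of $L$ restricts to one of any finitely generated sublattice. For the converse I would apply compactness to $T$. A finite $T_0 \subseteq T$ mentions only finitely many symbols $R_{x_1}, \dots, R_{x_m}$; let $S$ be the sublattice they generate. By hypothesis $S$ is linear, and since $S$ is closed under meet and join its type 1 representation interprets each symbol appearing in $T_0$ and satisfies every equivalence, commuting, meet, join, and faithfulness clause of $T_0$ (the remaining symbols, for elements outside $S$, being interpreted arbitrarily). Thus every finite fragment of $T$ is satisfiable, so $T$ itself has a model, i.e.\ $L$ is linear.

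Parts (3) and (4) are then short. For (3), a lattice generated by finitely many elements is at most countable, since there are only countably many lattice terms over finitely many generators; hence every finitely generated sublattice of $L$ is countable, the hypothesis of (3) makes all of them linear, and (2) applies. For (4), if $L$ is countable then the language of $T$ is countable, so applying the downward L\"owenheim--Skolem theorem to any model of $T$ yields a model with at most countable base set, which is the desired type 1 representation over a countable set. The one real obstacle throughout is the first-order encoding of the join in (1); everything else is a routine application of standard model theory once commuting has been used to collapse join to composition.
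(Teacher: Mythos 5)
Your proof is correct, and it follows essentially the intended route: the survey gives no proof of its own here --- the theorem is quoted material, cited to \cite{NatPick1987}*{Proof of Lem.~4} and \cite{Jons1959a}, and that source sketches exactly your argument (a first-order axiomatization making the type~1 representations of $L$ an elementary class, then compactness for (2), and a countability plus downward L\"owenheim--Skolem argument for (3) and (4)). You also correctly isolate and resolve the only delicate point: the join clause $R_{x \vee y} = R_x \circ R_y$ is first-order expressible only because the commuting clause collapses the transitive-closure join of $\textnormal{Eq}\,U$ to a single composition, which is precisely the Britz--Mainetti--Pezzoli theorem quoted elsewhere in the paper.
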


\begin{theorem} \cite{Haim1984a}*{Ch.~II}\cite{Haim1985a}*{sec.~1.0}
The class of linear lattices is closed under direct limits.
\end{theorem}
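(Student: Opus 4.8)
The plan is to deduce this from the fact, established in Theorem~\ref{th:linear-sent}, that the class of linear lattices is \emph{implicationally defined}: it coincides with the class of lattices satisfying J\'onsson's infinite system of universal Horn sentences. The Gr\"atzer characterization quoted above (a class is implicationally defined iff it is closed under subalgebras, direct products, isomorphic images, and direct limits) then yields closure under direct limits immediately. So the formal proof I would write is one line: linear lattices are implicationally defined, hence closed under direct limits.

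To expose \emph{why} this works—and where the hypotheses are genuinely used—I would also give the direct argument. Let $L = \varinjlim_\alpha L_\alpha$ be a direct limit of linear lattices over a directed index set, with canonical homomorphisms $f_\alpha \colon L_\alpha \to L$. It suffices to verify in $L$ each defining Horn sentence $\bigwedge_{i=1}^{n}(P_i = Q_i) \Rightarrow (P_0 = Q_0)$. Fix an assignment of its finitely many variables to elements of $L$; by directedness these are all images under a single $f_\alpha$, so choose preimages in $L_\alpha$. Suppose the $n$ hypotheses hold in $L$. Each equality $P_i = Q_i$ in a direct limit is witnessed at some finite stage, and since there are only finitely many of them, directedness lets me pass to one stage $\beta \geq \alpha$ at which all $n$ equalities hold in $L_\beta$.

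Because $L_\beta$ is linear, it satisfies the sentence, so $P_0 = Q_0$ holds in $L_\beta$; applying the homomorphism $f_\beta$ transports this equality up to $L$, giving the conclusion. Thus every defining sentence holds in $L$, and $L$ is linear. The main obstacle—the only step where the structure of the problem matters—is the passage to a common stage $\beta$ witnessing all hypotheses simultaneously. This is exactly Haiman's observation, recorded in Theorem~\ref{th:linear-sent}, that each individual sentence carries only finitely many hypotheses (the $n$ inequalities $b_{k,\phi(k),\psi(k)} \leq a_{2k-1} \vee a_{2k}$ for $k = 1, \ldots, n$); if a single defining sentence required infinitely many hypotheses, no finite stage need witness them all at once and the argument would collapse.
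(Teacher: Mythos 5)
Your proposal is correct and follows essentially the same route as the paper: the paper's proof likewise observes that linear lattices are defined by universal Horn sentences (Theorem~\ref{th:linear-sent}), each depending on only finitely many hypotheses $P_i \leq Q_i$, and that such implications are preserved in passing to a direct limit. Your expanded argument (choosing a common stage $\beta$ by directedness and transporting the conclusion along $f_\beta$) merely spells out the details the paper leaves implicit, and your identification of the finiteness of hypotheses as the crucial point is exactly the observation of Haiman that the paper cites.
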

\begin{proof}
\leavevmode

Because the class is defined by implications, and every implication
depends on only finitely many hypotheses $P_i \leq Q_i$,
every implication that is satisfied by every lattice in a direct
system is satisfied by the direct limit of the direct system.
\end{proof}

\begin{question} \label{q:linear-direct-limit} \cite{Haim1984a}*{Ch.~II}\cite{Haim1985a}*{sec.~1.0}
It does not seem possible, however, straightforwardly to
construct a representation of a direct limit of linear lattices from
representations of the individual lattices.
Is there such a construction?
\end{question}

\paragraph{Normal subgroups of a group}

\begin{theorem} \label{th:normal-linear} \cite{Jons1953b}*{sec.~1}\cite{Nat2017}*{ch.~4}
The normal subgroup lattice of a
group $G$ has a natural linear representation $(X, F)$: take $X = G$ and
$F(N) = \{(x, y) \in G^2:  xy^{-1} \in N\}$. 
Thus the normal subgroup lattice of a group is linear, and so is any
sublattice of normal subgroups of a group.
\end{theorem}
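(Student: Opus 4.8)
The plan is to verify that $F$ is a type-1 representation in the sense of Definition~\ref{def:type}, and then to invoke Definition~\ref{def:linear} together with closure under sublattices (Theorem~\ref{th:linear-subd}). First I would observe that, for any subgroup $N$ (normality not yet needed), $F(N)$ is exactly the right-coset relation $x \sim y \iff Nx = Ny$, and check reflexivity ($xx^{-1}=e\in N$), symmetry ($yx^{-1}=(xy^{-1})^{-1}$), and transitivity ($xz^{-1}=(xy^{-1})(yz^{-1})$) directly from the subgroup axioms, so each $F(N)$ lies in $\textnormal{Eq } G$. Next I would show $F$ is an order-embedding: $M \subseteq N$ plainly gives $F(M)\subseteq F(N)$, and conversely, testing the pair $(m,e)$ for $m\in M$ (which lies in $F(M)$ since $me^{-1}=m$) shows that $F(M)\subseteq F(N)$ forces every $m\in M$ into $N$. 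In particular $F$ is injective.

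For the meet there is nothing subtle: $F(M)\wedge F(N) = F(M)\cap F(N) = \{(x,y): xy^{-1}\in M\cap N\} = F(M\wedge N)$, since $M\wedge N = M\cap N$ in the normal subgroup lattice and meets in $\textnormal{Eq } G$ are intersections. The substance of the proof—and the step I expect to be the main obstacle—is the join, which is precisely where normality is essential. I would compute the relational composition directly: $(x,z)\in F(M)\circ F(N)$ iff there is a $y$ with $xy^{-1}\in M$ and $yz^{-1}\in N$, and the witness $y = nz$ (obtained by writing $xz^{-1}=mn$ with $m\in M$, $n\in N$) together with the converse identity $xz^{-1}=(xy^{-1})(yz^{-1})$ shows $F(M)\circ F(N) = \{(x,z): xz^{-1}\in MN\} = F(MN)$.

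Because $M$ and $N$ are normal, $MN=NM$ is again a normal subgroup, equal to the join $M\vee N$; hence $F(M)\circ F(N) = F(NM) = F(N)\circ F(M)$, so the two relations commute and the representation satisfies the type-1 condition of Definition~\ref{def:type}. By the equivalence of commuting relations with the identity join $=$ composition (the theorem of Britz, Mainetti, and Pezzoli quoted above), it follows that $F(M)\vee F(N) = F(M)\circ F(N) = F(MN) = F(M\vee N)$, so $F$ preserves joins as well and its image is closed under the operations of $\textnormal{Eq } G$.

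Assembling these facts, $F$ carries the normal subgroup lattice isomorphically onto a sublattice of $\textnormal{Eq } G$ and is of type 1, so by Definition~\ref{def:linear} the normal subgroup lattice is linear. Finally, any sublattice of normal subgroups is a sublattice of a linear lattice, hence linear by Theorem~\ref{th:linear-subd}. The only place normality enters is the join computation: for a general subgroup the set $MN$ need not be a subgroup and the relations $F(M)$, $F(N)$ need not commute, so this is exactly the crux of the argument.
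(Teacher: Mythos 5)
Your proof is correct and takes exactly the route the paper intends: the paper, being a survey, states this theorem only by citation (J\'onsson asserts the type-1 property of this representation as ``well known,'' and Nation leaves it as an exercise), and your verification---each $F(N)$ an equivalence relation, $F$ an order-embedding, meets as intersections, and the key computation $F(M)\circ F(N)=F(MN)$ with normality giving $MN=NM=M\vee N$ and hence commutation and join-preservation---is precisely the standard argument those sources have in mind. You also correctly isolate the join/commutation step as the only place normality enters, and your appeal to closure under sublattices (Theorem~\ref{th:linear-subd}) for the final claim matches the paper's own structure.
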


\begin{theorem} \cite{Jons1954a}*{Th.~3.6}\cite{Herr2013}*{sec.~3}
There is a finite linear lattice (called $B^\prime$
in \cite{Jons1954a}) which is not
embeddable into the normal subgroup lattice of any group.
\end{theorem}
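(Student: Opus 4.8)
The plan is to exhibit J\'onsson's lattice $B'$ explicitly and then verify its two defining properties separately: that it is linear, and that it admits no embedding into the normal subgroup lattice of any group. Following \cite{Jons1954a}*{Th.~3.6} and the exposition in \cite{Herr2013}*{sec.~3}, I would construct $B'$ from the subspace lattices $L(V_1)$ and $L(V_2)$ of two three-dimensional vector spaces $V_1, V_2$ over division rings $D_1, D_2$ of \emph{distinct prime characteristics} $p_1 \neq p_2$, amalgamated along a common prime interval (Herrmann's $\dim v = 1$) so that the result is a \emph{simple} modular lattice of height $5$. The entire construction turns on $p_1 \neq p_2$; this is the feature that separates mere linearity from group-representability.

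For linearity, each factor $L(V_i)$ is the subspace lattice of a vector space and is therefore linear: the subspaces of a vector space over a division ring are represented by pairwise commuting equivalence relations (the archetypal type-1 representation). Since the class of linear lattices is closed under direct products and sublattices (theorem~\ref{th:linear-subd}), it suffices either to realize $B'$ as a sublattice of a product of linear lattices, or --- as J\'onsson does directly --- to write down commuting equivalence relations representing $B'$ and check the type-1 condition $\phi(x) \circ \phi(y) = \phi(y) \circ \phi(x)$ on generators. Either route makes this half routine. (Note in passing that this places $B'$ strictly between the two classes of theorem~\ref{th:normal-linear}.)

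For non-embeddability, I would argue by contradiction: suppose $B'$ embeds as a sublattice of the normal subgroup lattice $N(G)$ of some group $G$. Because $B'$ is simple and of finite height, the criterion of \cite{NatPick1987}*{Proof of Lem.~2} (a finite-dimensional modular lattice is simple iff all its prime quotients are projective to one another) forces the prime quotients coming from the $L(V_1)$-part and those coming from the $L(V_2)$-part to be projective inside $B'$, and hence --- since transposition is computed from the lattice operations, which are inherited by a sublattice --- projective inside $N(G)$. The key is that projective covering pairs of normal subgroups correspond, via the Jordan--H\"older theory for groups with operators, to \emph{$G$-isomorphic chief factors}, whose common additive structure has a single well-defined prime characteristic. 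On the other hand, the characteristic of the coordinatizing division ring of a Desarguesian projective plane is a lattice invariant of the corresponding height-$3$ interval, so the $L(V_1)$-prime quotients carry characteristic $p_1$ and the $L(V_2)$-prime quotients carry characteristic $p_2$. Equating these yields $p_1 = p_2$, contradicting the choice $p_1 \neq p_2$.

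The main obstacle is the middle link of this chain. A sublattice embedding preserves joins, meets, and therefore transpositions, but it need not send covering pairs of $B'$ to covering pairs of $N(G)$: there may be normal subgroups of $G$ strictly between the images of a prime quotient of $B'$. Thus the delicate step is to show that the characteristic invariant attached to a prime quotient of $B'$ is genuinely reflected in the chief factors of $G$ sitting inside the corresponding interval, so that lattice projectivity really does equate $p_1$ and $p_2$. Making this reflection precise --- tracking how the coordinate-ring characteristic of each glued plane survives into the group's chief series --- is the technical heart of J\'onsson's argument and is exactly where the hypothesis $p_1 \neq p_2$ is ultimately cashed in.
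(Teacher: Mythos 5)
The paper itself offers no proof of this theorem: it is a survey entry resting entirely on the citations \cite{Jons1954a}*{Th.~3.6} and \cite{Herr2013}*{sec.~3}, so your proposal can only be measured against the cited construction. Your description of that construction (two height-$3$ subspace lattices over division rings of distinct prime characteristic, glued over a prime quotient to form a simple modular lattice of height $5$) does match what the paper quotes from Herrmann. The trouble lies in how you discharge the two halves of the verification.

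The linearity half has a genuine gap, and one of your two suggested routes is provably a dead end. Since $B^\prime$ is simple --- a fact you yourself invoke in the second half --- it cannot usefully be realized as a sublattice of a product: if a nontrivial simple lattice embeds in a product $\prod_i A_i$, the kernels of the coordinate projections are congruences of the embedded copy whose intersection is trivial, and by simplicity each is trivial or full, so at least one is trivial and the lattice already embeds in a single factor $A_i$. As $B^\prime$ has height $5$, it embeds in neither height-$3$ factor $L(V_1)$, $L(V_2)$; in general, ``sublattice of a product of linear lattices'' collapses, for a simple lattice, to ``sublattice of a single linear lattice,'' which is precisely the statement being proved, so theorem~\ref{th:linear-subd} buys nothing here. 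Your fallback --- ``write down commuting equivalence relations and check'' --- is a restatement of the goal rather than an argument: Hall--Dilworth gluing does not come with an automatic type-1 representation, and constructing one for the glued lattice out of representations of the two planes is exactly the technical content of J\'onsson's gluing results (the subject of Herrmann's review). In other words, the step you label ``routine'' is the theorem. The non-embeddability half is a reasonable sketch of the right idea --- simplicity, projectivity of prime quotients, and a characteristic invariant forcing $p_1 = p_2$ --- but, as you concede, its decisive step (that a lattice embedding, which need not preserve covers, still transports the characteristic of each coordinatizing division ring into the chief factors of $G$) is left entirely unproven, and acknowledging a gap does not fill it. With one half resting on an impossible reduction and the other on an admittedly missing lemma, this is an outline of J\'onsson's argument rather than a proof of it.
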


\begin{question} \label{q:normal-dual}
Is the class of sublattices of normal subgroups of a group self-dual?
\end{question}

\begin{question} \label{q:normal-sent} \cite{Jons1953b}*{sec.~5}
Can the class of sublattices of normal subgroups of a group be
specified by a set of sentences?  By a set of identities?  
By a finite set of sentences?  By a finite set of identities?
\end{question}

\paragraph{Subgroups of an Abelian group}

\begin{theorem}
Because all subgroups of Abelian groups are normal, every lattice of
subgroups of an Abelian group is also a lattice of normal subgroups of
the same group.
\end{theorem}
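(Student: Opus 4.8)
The plan is to reduce the statement to the elementary group-theoretic fact that every subgroup of an Abelian group is normal, and then to observe that this identification makes the two lattices literally the same. First I would recall the defining condition: a subgroup $H \leq G$ is \emph{normal} precisely when $gHg^{-1} = H$ for every $g \in G$. The core verification is then immediate from commutativity: for any $g \in G$ and any $h \in H$ we have $ghg^{-1} = gg^{-1}h = h \in H$, so $gHg^{-1} \subseteq H$, and applying the same computation with $g^{-1}$ in place of $g$ gives the reverse inclusion, whence $gHg^{-1} = H$. Thus in an Abelian group the normality condition is vacuously satisfied by every subgroup.

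With this in hand, the set of all subgroups of $G$ coincides as a set with the set of all normal subgroups of $G$. Since both the lattice of subgroups and the lattice of normal subgroups carry the same partial order (containment) on this common underlying set, with meet given by intersection and join by the subgroup generated by the union in either case, the two lattices are not merely isomorphic but equal. The same reasoning applies verbatim to sublattices: under the identification above, any sublattice of the subgroup lattice of $G$ is simultaneously a sublattice of the normal subgroup lattice of $G$, which is the form matching the sublattice language used in the surrounding results.

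There is no genuine obstacle here; the single point deserving a word of care is that equality of the underlying sets together with the shared ambient order already forces the two sets of lattice operations to agree, so no separate computation of meets and joins is required beyond noting that they are defined identically in both lattices.
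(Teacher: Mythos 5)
Your proposal is correct and takes essentially the same approach as the paper, which offers no separate proof at all: the justification (``all subgroups of Abelian groups are normal, hence the two lattices coincide'') is embedded in the statement itself, and you have simply filled in the routine computation $ghg^{-1}=h$ and the observation that the shared underlying set and order force the lattice structures to agree.
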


\begin{theorem} \label{th:abelian-normal-1} \cite{PalSzab1995}
The identity
$$ x_1 \wedge \{y_1 \vee [(x_2 \vee y_2) \wedge (x_3 \vee y_3) \wedge (x_4 \vee y_4)]\}
\leq [(p_{12} \vee p_{34}) \wedge (p_{13} \vee p_{24}) \wedge (p_{14} \vee p_{23})] \vee x_2 \vee y_3 \vee y_4 $$
where $p_{ij} = (x_i \vee y_j) \wedge (x_j \vee y_i)$, holds in the
subgroup lattice of every Abelian group but fails in the lattice of
normal subgroups of some finite group.
\end{theorem}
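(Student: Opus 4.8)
The plan is to establish the two halves of the statement independently: first that the displayed inclusion holds in the subgroup lattice of every Abelian group, by a direct element chase, and then that it fails in the normal subgroup lattice of some finite group, by producing an explicit counterexample. For the positive half I would write the Abelian group additively, so that $\vee$ becomes subgroup sum $+$ and $\wedge$ becomes intersection $\cap$, and $p_{ij} = (x_i + y_j) \cap (x_j + y_i)$. Fix subgroups $x_1,\dots,x_4,y_1,\dots,y_4$ and take an arbitrary element $g$ of the left-hand side. Then $g \in x_1$ and $g = u + w$ with $u \in y_1$ and $w$ in the triple meet $(x_2+y_2) \cap (x_3+y_3) \cap (x_4+y_4)$, so $w$ admits three decompositions $w = a_i + c_i$ with $a_i \in x_i$ and $c_i \in y_i$ for $i = 2,3,4$.

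Commutativity now supplies the auxiliary elements. From $w = a_i + c_i = a_j + c_j$ one gets $q_{ij} := a_i - c_j = a_j - c_i$, which lies in $(x_i+y_j)\cap(x_j+y_i) = p_{ij}$; and $r_i := g - c_i = u + a_i$ lies in $(x_1+y_i)\cap(x_i+y_1) = p_{1i}$. The crux of this half is to recognize the single witness
\[ g'' := g + a_2 - c_3 - c_4 = r_2 + q_{34} = r_3 + q_{24} = r_4 + q_{23}, \]
where the three equalities follow at once from the definitions together with $a_i + c_i = a_j + c_j$. These three representations place $g''$ simultaneously in $p_{12}+p_{34}$, in $p_{13}+p_{24}$, and in $p_{14}+p_{23}$, hence in the triple meet $M$ forming the first summand of the right-hand side. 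Since $g = g'' - a_2 + c_3 + c_4$ with $-a_2 \in x_2$, $c_3 \in y_3$, and $c_4 \in y_4$, it follows that $g \in M + x_2 + y_3 + y_4$, which is exactly the right-hand side. I expect this half to be routine once $g''$ is guessed; the only labor is bookkeeping the three decompositions.

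For the negative half the plan is to exhibit a specific finite non-Abelian group $G$ and an assignment of normal subgroups to $x_1,\dots,y_4$ at which the inclusion is violated, following \cite{PalSzab1995}. This is the genuinely difficult step and the real content of the theorem, since it is what separates the laws of subgroup lattices of Abelian groups from those of arbitrary normal subgroup lattices. I would look for $G$ in a group variety of small exponent and nilpotency class, in the same circle of examples exploited in \cite{KissPal1998}, where the failure of commutativity lets the composite term on the left escape the right-hand side; once $G$ and the eight normal subgroups are written down, checking that the left side is not contained in the right side is a finite computation. The main obstacle throughout is thus discovery rather than verification --- mildly so for the witness $g''$ in the positive half, and substantially so for the counterexample group in the negative half.
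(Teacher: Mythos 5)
There is nothing in the paper to compare against: this survey states the theorem as a quotation and its entire justification is the citation to \cite{PalSzab1995}; no proof is given. So your attempt can only be measured on its own terms and against the original source.

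Your Abelian half is correct and complete, and I verified the algebra. With $g \in x_1$, $g = u + w$, $u \in y_1$, $w = a_i + c_i$ ($a_i \in x_i$, $c_i \in y_i$, $i = 2,3,4$), your elements $q_{ij} = a_i - c_j = a_j - c_i \in p_{ij}$ and $r_i = g - c_i = u + a_i \in p_{1i}$ are as claimed, and the witness $g'' = g + a_2 - c_3 - c_4$ satisfies $g'' = r_3 + q_{24}$ and $g'' = r_4 + q_{23}$ by direct cancellation, while $r_2 + q_{34} = (g - c_2) + (a_3 - c_4)$ differs from $g''$ by $(a_3 + c_3) - (a_2 + c_2) = w - w = 0$. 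Hence $g''$ lies in all three joins $p_{12} \vee p_{34}$, $p_{13} \vee p_{24}$, $p_{14} \vee p_{23}$, and $g = g'' - a_2 + c_3 + c_4$ lies in the right-hand side. This element chase is exactly the kind of argument the cited source gives, and it is the half of the theorem that can reasonably be reconstructed blind.

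The negative half, however, is a plan rather than a proof, and that is a genuine gap. You never exhibit the finite group $G$ nor the eight normal subgroups at which the inclusion fails; ``following \cite{PalSzab1995}'' at that point is a citation, not an argument, and the separating content of the theorem --- that the law distinguishes subgroup lattices of Abelian groups from normal subgroup lattices of arbitrary finite groups --- rests entirely on that construction. Your heuristic (a finite group of small exponent and nilpotency class, in the spirit of \cite{KissPal1998}) points in a sensible direction, but without the explicit group and assignment, and the finite computation showing the left side escapes the right side, the second half of the statement remains unproved. To be fair, this is precisely the gap the survey itself leaves by treating the whole theorem as a quotation; but as a standalone proof, your proposal establishes only the first clause of the theorem.
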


\begin{theorem} \label{th:abelian-normal-2} \cite{KissPal1998}*{sec.~1}
The lattice of normal subgroups of the three generator free
group $G$ in the group variety defined by the laws $x^4 - 1$ and
$x^2y - yx^2$ cannot
be embedded into the subgroup lattice of any Abelian group.
\end{theorem}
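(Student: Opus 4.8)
The plan is to produce a single lattice identity that holds in the subgroup lattice of every Abelian group and then to show that this identity fails in the normal subgroup lattice $N(G)$ of the given group $G$. A lattice embedding carries $N(G)$ isomorphically onto a sublattice of the target, and any lattice identity is automatically inherited by every sublattice; hence an embedding of $N(G)$ into $\mathrm{Sub}(A)$ for some Abelian $A$ would force $N(G)$ to satisfy every identity valid in $\mathrm{Sub}(A)$. Exhibiting one identity that $N(G)$ violates therefore excludes all such embeddings at once. The natural candidate is the identity of Theorem~\ref{th:abelian-normal-1}, which Pálfy and Szabó show holds in every Abelian subgroup lattice yet fails in the normal subgroup lattice of some finite group; the task reduces to verifying that the specific free group $G$ is such a witness.

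First I would pin down the structure of $G$. The defining laws $x^4=1$ and $x^2y=yx^2$ say that $G$ has exponent dividing $4$ and that every square is central, which forces a finitely generated $G$ to be a finite $2$-group. A short computation with commutator identities then shows $G$ has nilpotency class two: modulo the centre every element has order two, so $G/Z(G)$ is elementary Abelian, whence $G'\le Z(G)$, and both the squares $a^2,b^2,c^2$ and the commutators $[a,b],[a,c],[b,c]$ of the three generators $a,b,c$ are central involutions. In the relatively free object one finds $Z(G)$ elementary Abelian of rank at most six and $G/Z(G)\cong(\mathbb{Z}/2)^3$, so $|G|\le 2^9$. This explicit model makes every normal subgroup computable, together with the lattice operations (join being the product of normal subgroups, meet the intersection).

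With the model in hand I would select the eight normal subgroups $X_1,\dots,X_4,Y_1,\dots,Y_4$ witnessing the failure and substitute them into both sides of the identity of Theorem~\ref{th:abelian-normal-1}. The auxiliary terms $p_{ij}=(X_i\vee Y_j)\wedge(X_j\vee Y_i)$ are to be engineered so that the diagonal relations among the $X_i,Y_i$ produce, on the left, a central commutator element that the right-hand meet-of-joins cannot assemble, yielding a strict inequality in $N(G)$. The point of choosing $G$ with central squares but genuinely non-Abelian commutator structure is exactly that this configuration is realizable among the normal subgroups of $G$ while being forbidden in any Abelian subgroup lattice, where the comparison is governed by the arithmetic of subgroups of an Abelian group.

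The main obstacle is precisely this last verification: one must carry out the join and meet computations for the chosen subgroups inside $G$, tracking which products of subgroups are normal and how the central squares and commutators contribute, and then confirm that the left side genuinely exceeds the right. The delicate part is designing the witnesses so that the commutator separating the two sides lies in the left-hand term but in none of the joins on the right; this is the combinatorial heart of the Kiss--Pálfy construction, and everything else is the routine bookkeeping of products and intersections of subgroups of a class-two $2$-group.
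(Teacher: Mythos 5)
Your proposal is judged against the cited source here, because the survey itself gives no proof of this statement: it is quoted directly from \cite{KissPal1998}. The parts of your argument that you actually carry out are fine: lattice identities do pass to sublattices, so an embedding of $N(G)$ into $\mathrm{Sub}(A)$ would force $N(G)$ to satisfy every identity valid in subgroup lattices of Abelian groups; and your structure theory of $G$ is correct ($G$ has class two, $Z(G)$ is elementary Abelian of rank six, $G/Z(G)\cong(\mathbb{Z}/2)^3$, $|G|=2^9$). The genuine gap is that the one claim carrying all the weight --- that the P\'alfy--Szab\'o identity of Theorem~\ref{th:abelian-normal-1} fails in this particular $N(G)$ --- is never established. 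It does not follow from Theorem~\ref{th:abelian-normal-1}, which asserts failure only in the normal subgroup lattice of \emph{some} finite group; the witness constructed in \cite{PalSzab1995} is a different, larger group, not this $512$-element $G$. Your own closing paragraph concedes that choosing the eight normal subgroups $X_i,Y_i$ and checking the inequality is ``the combinatorial heart'' of the matter; that heart is exactly what is missing, so what you have is a proof plan, not a proof.

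Moreover, the plan is not known to be completable. The truth of the theorem does not imply that any identity valid in all Abelian subgroup lattices fails in $N(G)$: non-embeddability would produce such an identity only if the class of lattices embeddable into subgroup lattices of Abelian groups were definable by identities, and whether that holds is precisely the open Question~\ref{q:abelian-sent} of this survey. This is also why \cite{KissPal1998} is a separate contribution rather than a corollary of \cite{PalSzab1995}: Kiss and P\'alfy argue directly from a hypothetical embedding $N(G)\hookrightarrow\mathrm{Sub}(A)$ to a contradiction, rather than by exhibiting a failing identity. To repair your proof you must either produce and verify an explicit failing assignment of the eight variables in $N(G)$ --- which no cited result guarantees exists --- or abandon the identity route and give a direct embedding argument in the spirit of the cited source.
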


\begin{theorem}
Theorems~\ref{th:abelian-normal-1} and \ref{th:abelian-normal-2} each imply
that the property of being a sublattice of the
subgroups of an Abelian group is strictly stronger than the property
of being a sublattice of the normal subgroups of a general group.
Theorem~\ref{th:abelian-normal-1} implies this for finite groups.
\end{theorem}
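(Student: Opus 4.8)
The plan is to split the claim into its two logical halves — that property $A$ (being a sublattice of the subgroup lattice of an Abelian group) implies property $N$ (being a sublattice of the normal subgroup lattice of some group), and that this implication is strict — and then to supply a witnessing lattice for strictness from each of Theorems~\ref{th:abelian-normal-1} and~\ref{th:abelian-normal-2}. The forward implication $A \Rightarrow N$ is already in hand and need only be cited: it is exactly the content of the theorem immediately preceding, since every subgroup of an Abelian group is normal, so a subgroup lattice of an Abelian group is a normal subgroup lattice of the same group, and the relevant embedding property passes to sublattices.

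For strictness via Theorem~\ref{th:abelian-normal-1}, the key point I would invoke is that a lattice identity is inherited by every sublattice (the sublattice-closure half of Birkhoff's Variety Theorem, \ref{th:birk-variety}). Since the displayed identity holds in the subgroup lattice of every Abelian group, it holds in every lattice with property $A$. But Theorem~\ref{th:abelian-normal-1} furnishes a finite group whose normal subgroup lattice violates that identity; this lattice has property $N$ by construction, yet cannot have property $A$. Hence $A$ is strictly stronger than $N$. Because the separating group here is explicitly finite, the witness is a \emph{finite} normal subgroup lattice failing an identity valid in all (in particular all finite) Abelian subgroup lattices, and this is precisely what yields the refinement to finite groups.

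For strictness via Theorem~\ref{th:abelian-normal-2}, no identity is needed: that theorem directly exhibits a normal subgroup lattice — of the three-generator free group in the stated variety — which has property $N$ by construction but is provably not embeddable into the subgroup lattice of any Abelian group, hence lacks property $A$. This again gives a lattice with $N$ but not $A$, establishing strictness over general groups.

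Since the two cited theorems do essentially all the work, I expect no deep obstacle; the care lies entirely in the logical bookkeeping. Specifically, I would make sure ``strictly stronger'' is parsed as $A \Rightarrow N$ together with a failure of the converse, that the identity-inheritance step is explicitly justified rather than assumed, and that the finite refinement is attributed only to Theorem~\ref{th:abelian-normal-1}, whose separating group is stated to be finite, rather than to Theorem~\ref{th:abelian-normal-2}, where finiteness of the group is not asserted in the quoted statement.
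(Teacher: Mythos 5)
Your proposal is correct and takes essentially the same route as the paper, which states this theorem without an explicit proof precisely because it is an immediate consequence of the preceding theorem (giving $A \Rightarrow N$) together with the witnesses supplied by Theorems~\ref{th:abelian-normal-1} and~\ref{th:abelian-normal-2}. Your elaboration — identity inheritance by sublattices for the first witness, direct non-embeddability for the second, and attributing the finite refinement only to Theorem~\ref{th:abelian-normal-1} — is exactly the bookkeeping the paper leaves implicit.
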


\begin{question} \label{q:abelian-dual}
Are the set of lattices that are embeddable into the subgroups of an
Abelian group self-dual?
\end{question}

\begin{question} \label{q:abelian-sent} \cite{Jons1953b}*{sec.~5}
Can the class of sublattices of subgroups of an Abelian group be
specified by a set of sentences?  By a set of identities?  
By a finite set of sentences?  By a finite set of identities?
\end{question}

\paragraph{Subspaces of a skew vector space}

\begin{definition}
A \emph{skew vector space} is a module over a division ring.
\end{definition}

\begin{theorem} (the Veblen--Young theorem)
For any projective space in which Desargues' theorem holds, the
incidence lattice of the projective space is isomorphic to the
subspace lattice of a skew vector space.
\end{theorem}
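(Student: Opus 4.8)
The plan is to prove this in the classical Veblen--Young manner, namely by \emph{coordinatizing} the geometry: from the projective space $P$ one manufactures a division ring $D$ out of the incidence structure itself, builds a left $D$-module $V$ whose $D$-dimension exceeds the projective dimension of $P$ by one, exhibits an isomorphism of incidence structures $P \cong \mathbb{P}(V)$, and finally observes that the induced correspondence on flats is an order isomorphism onto the subspace lattice of $V$, hence a lattice isomorphism. The force of the hypothesis is concentrated in the construction of $D$.

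First I would separate the dimension ranges. In projective dimension $\geq 3$, Desargues' theorem is a consequence of the incidence axioms (one lifts the two triangles out of their common plane through a point not lying in it), so the hypothesis is automatic there and the genuine content lies in the planar case, where Desargues must be assumed. The coordinatization is then carried out on a single line $\ell$: fix three distinct points of $\ell$ to play the roles of $0$, $1$, and $\infty$, set $D = \ell \setminus \{\infty\}$, and define addition and multiplication on $D$ by the standard geometric constructions (iterated projections through auxiliary points, with ``parallel'' interpreted via the distinguished point $\infty$). One then checks that $(D, +, \cdot)$ is a division ring.

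The hard part will be exactly this verification, and in particular the associativity of multiplication: this is the step that genuinely invokes the \emph{full} Desargues configuration, and it is also where one must confirm that the geometric operations are well defined independently of the auxiliary choices. Commutativity and associativity of addition and the two distributive laws follow from further (typically more special) instances of the Desargues property, while the existence of additive and multiplicative inverses is built into the nondegeneracy of the incidence axioms. In dimension $\geq 3$ there is the extra bookkeeping of showing that the coordinate ring read off from one line agrees with that from any other and that the coordinatization extends consistently across the whole space --- again an application of Desargues, which is freely available in this range.

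With $D$ in hand I would introduce homogeneous coordinates, representing the points of $P$ by ratios $[x_0 : x_1 : \cdots : x_n]$ of elements of $D$ and verifying that the flats of $P$ are precisely the solution sets of homogeneous $D$-linear systems; this yields an incidence isomorphism $P \cong \mathbb{P}(V)$ with $V$ the corresponding $D$-module (the infinite-dimensional case being analogous, with $V$ a $D$-vector space of the appropriate dimension). Finally, the map sending a flat $F$ of projective dimension $k$ to the $(k+1)$-dimensional $D$-subspace spanning it --- and the empty flat to $\{0\}$ --- is a bijection between the flats of $P$ and the subspaces of $V$ that both preserves and reflects inclusion. An order isomorphism between two lattices is automatically a lattice isomorphism, carrying intersections of flats to intersections of subspaces (meets) and projective spans to $D$-linear sums (joins), so the incidence lattice of $P$ is isomorphic to the subspace lattice of the skew vector space $V$, as required.
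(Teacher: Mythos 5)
The paper itself contains no proof of this statement: it is recorded as the classical Veblen--Young theorem, with the attribution standing in for a proof, so there is nothing internal to compare your argument against. Judged on its own terms, your outline follows the standard coordinatization route, and its architecture is correct: Desargues is a theorem of the incidence axioms in dimension $\geq 3$ and a genuine hypothesis only in the planar case; $D$ is constructed von Staudt-style on a line punctured at $\infty$; the full Desargues configuration is what delivers well-definedness and associativity of multiplication, while more special (minor) instances yield the additive laws and distributivity; homogeneous coordinates then identify the flats of $P$ with the homogeneous solution sets of $D$-linear systems; and your closing observation is right that a bijection of flats onto subspaces which preserves and reflects inclusion is automatically a lattice isomorphism, since meets and joins are determined by the order. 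This is exactly how the result is proved in the literature the survey is implicitly leaning on.

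Two caveats. First, as written this is a roadmap rather than a proof: each genuinely hard verification (independence of the auxiliary choices, the division-ring axioms, agreement of the coordinate rings attached to different lines, the identification of flats with linear subspaces) is correctly located but deferred; carrying them out is the content of a book-length treatment, which is presumably why the survey cites rather than proves the theorem, so your sketch is already more than the paper supplies. Second, both the statement and your proof tacitly assume dimension $\geq 2$. In a projective space consisting of a single line $\ell$ with $n \geq 3$ points, Desargues' theorem holds vacuously, your construction cannot start (there are no points off $\ell$ through which to project), and the conclusion can actually fail: the flat lattice is $M_n$, which is the subspace lattice of a skew vector space only when $n-1$ is the cardinality of a division ring, so for instance $n=7$ gives a counterexample. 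The degenerate low-dimensional cases therefore need to be excluded or handled separately, a gap in the theorem as quoted as much as in your argument.
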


\begin{theorem}
Because a skew vector space is Abelian with respect to the operator
$+$, its subspace lattice is the lattice of subgroups
of the vector space taken as an Abelian group.
\end{theorem}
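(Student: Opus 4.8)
The plan is to read the claim as an assertion that two lattices coincide, and therefore to prove it by two set-inclusions between their underlying collections together with agreement of meet and join. Because every $D$-subspace of $V$ is automatically an additive subgroup, the subspace lattice always embeds into the subgroup lattice; the only content beyond that embedding is the \emph{reverse} inclusion, namely that every additive subgroup of $(V,+)$ is in fact a $D$-subspace. I would therefore aim the whole argument at this reverse inclusion, since that — not the embedding — is what an equality of lattices actually demands.

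First I would dispose of the easy direction and pin down the operations. Any $D$-subspace is closed under $+$ and under $v \mapsto -v = (-1)v$, hence is a subgroup; for two subspaces $U,W$ the subspace intersection and subgroup intersection are literally the same set, and the subspace sum $U+W = \{u+w : u \in U, w \in W\}$ is simultaneously the subgroup generated by $U \cup W$. Thus meets and joins agree, and the subspace lattice is a subposet of the subgroup lattice on which the two operations restrict correctly. This step uses only that $(V,+)$ is abelian, which is precisely the hypothesis the statement invokes.

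The crux — and the main obstacle — is the reverse inclusion: every additive subgroup $H \leq (V,+)$ must satisfy $dH \subseteq H$ for all $d \in D$. Abelianness alone does not force this, because closure under $+$ only guarantees closure under multiplication by integer multiples of $1_D$. The inclusion holds for all $H$ exactly when every element of $D$ acts as such an integer multiple, i.e. when $D$ equals its prime subring $\mathbb{Z}\cdot 1_D$. For a division ring this forces $D = \mathbb{F}_p$: then each $d \in \{0,1,\ldots,p-1\}$ gives $d\cdot h = h + \cdots + h \in H$, so every subgroup is a subspace and the equality is genuine. Conversely, if $D \supsetneq \mathbb{F}_p$ the reverse inclusion fails and I would exhibit an explicit counterexample — in characteristic $0$ one has $D \supseteq \mathbb{Q}$ and $\mathbb{Z} \subsetneq \mathbb{Q}$ is a subgroup but not a subspace; in characteristic $p$ with $[D:\mathbb{F}_p] > 1$, taking $V = D$ makes $(V,+)$ elementary abelian, and any $\mathbb{F}_p$-line is a subgroup but not one of the two $D$-subspaces $\{0\}, V$.

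Consequently I expect the honest outcome to be that the statement, read as literal equality, is correct precisely when $\operatorname{char} D = p$ and $D = \mathbb{F}_p$, and that in the general case what survives is only the sublattice relation: the subspace lattice is a (generally proper) sublattice of the subgroup lattice of $(V,+)$. The phrase ``Abelian with respect to $+$'' delivers the forward inclusion and the agreement of operations for free, but the reverse inclusion — the real substance of an equality claim — needs the extra hypothesis that the division ring be its own prime field. I would therefore present the proof as establishing equality under that hypothesis and flag the general case as yielding a sublattice.
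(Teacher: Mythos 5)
Your mathematics is sound throughout, but you have read more into the statement than the paper intends. In context this theorem is one link in the paper's hierarchy of representability properties: it parallels the preceding assertion that every lattice of subgroups of an Abelian group is also a lattice of normal subgroups of the same group, and it is immediately followed by Question~\ref{q:skew-abelian}, which asks whether subgroup lattices of Abelian groups embed into subspace lattices of skew vector spaces --- a question that would be vacuous under your equality reading. The intended claim is only that the subspace lattice \emph{is a lattice of subgroups} of $(V,+)$, i.e.\ a sublattice of the full subgroup lattice. The paper gives no proof beyond the ``Because\ldots'' clause in the statement itself, and your first two paragraphs are precisely that implicit argument made explicit: subspaces are closed under $+$ and $v \mapsto -v$, meets are intersections in both lattices, and the subgroup join of two subspaces is $U+W$, again a subspace, so the inclusion is a lattice embedding. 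Your further analysis --- that literal equality of the two lattices forces every subgroup to be a subspace and hence (for $V \neq 0$) holds precisely when $D$ is its own prime field $\mathbb{F}_p$, with $\mathbb{Z} \subsetneq \mathbb{Q}$ and an $\mathbb{F}_p$-line in $D$ as counterexamples otherwise --- is correct and usefully flags the looseness of the phrase ``is the lattice of subgroups,'' but it proves and refutes a claim the paper never makes or relies on. So there is no gap: you establish the intended statement by the paper's own route, and the conditional-equality classification is a correct extra.
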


\begin{theorem} \label{th:skew-dual}
The class of sublattices of subspaces of a skew vector space is self-dual.
\end{theorem}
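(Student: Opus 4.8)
The plan is to exploit the classical duality of vector spaces. If $V$ is a left vector space over a division ring $D$, then its algebraic dual $V^{*} = \mathrm{Hom}_{D}(V, D)$ is naturally a module over the opposite division ring $D^{\mathrm{op}}$, which is again a division ring; hence $V^{*}$ is itself a skew vector space. First I would fix an arbitrary member of the class, that is, a sublattice $L$ of the subspace lattice $\mathrm{Sub}(V)$ of some skew vector space $V$. The goal is to realize the order dual $L^{d}$ as a sublattice of $\mathrm{Sub}(V^{*})$, since that places $L^{d}$ in the class as well and thereby establishes self-duality.

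The central tool is the annihilator map $\perp \colon \mathrm{Sub}(V) \to \mathrm{Sub}(V^{*})$ sending a subspace $W$ to $W^{\perp} = \{ f \in V^{*} : f(w) = 0 \text{ for all } w \in W \}$. I would verify three properties: (a) $\perp$ is injective, which follows by extending a basis so as to produce, for any $w \notin W$, a functional vanishing on $W$ but not at $w$; (b) $(W_{1} + W_{2})^{\perp} = W_{1}^{\perp} \cap W_{2}^{\perp}$, which is immediate; and (c) $(W_{1} \cap W_{2})^{\perp} = W_{1}^{\perp} + W_{2}^{\perp}$. Granting these, $\perp$ is an injective lattice anti-homomorphism, so its restriction to $L$ is a lattice embedding of $L^{d}$ into $\mathrm{Sub}(V^{*})$: meets in $L^{d}$ (which are joins $W_{1}+W_{2}$ in $L$) go to meets $W_{1}^{\perp} \cap W_{2}^{\perp}$ by (b), and joins in $L^{d}$ (which are meets $W_{1} \cap W_{2}$ in $L$) go to joins $W_{1}^{\perp} + W_{2}^{\perp}$ by (c). Here I use crucially that $L$ is a \emph{sub}lattice, so its operations agree with intersection and sum inside $\mathrm{Sub}(V)$, and that the image $\{\,W^{\perp} : W \in L\,\}$ is then a sublattice of $\mathrm{Sub}(V^{*})$ isomorphic to $L^{d}$.

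The step I expect to be the only real obstacle is (c), because the analogous identity is well known to fail for the continuous dual in infinite dimensions, and one might fear the same trouble here. The plan is to show that for the \emph{algebraic} dual it holds for \emph{all} subspaces, so that no finite-dimensionality hypothesis is needed. Setting $U = W_{1} \cap W_{2}$, I would choose a basis of $U$, extend it to bases of $W_{1}$ and of $W_{2}$, check that the combined family is a basis of $W_{1} + W_{2}$ (any hidden dependence would force the $W_{1}$-part into $W_{1} \cap W_{2} = U$, hence to vanish), and finally extend to a basis of $V$. Prescribing values freely on this basis then writes any $f$ annihilating $U$ as $f = f_{1} + f_{2}$ with $f_{1} \in W_{1}^{\perp}$ and $f_{2} \in W_{2}^{\perp}$, the hypothesis $f|_{U} = 0$ being exactly what makes the two prescriptions agree on the basis of $U$; the reverse inclusion in (c) is trivial. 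Once (c) is in hand, the embedding $L^{d} \hookrightarrow \mathrm{Sub}(V^{*})$ completes the argument.
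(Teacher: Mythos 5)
Your proposal is correct and takes essentially the same approach as the paper's proof, which likewise embeds the dual lattice into $\mathrm{Sub}(V^{*})$ via the annihilator map $W \mapsto W^{\perp}$ (the paper writes $W^{0}$). You additionally verify what the paper's one-sentence proof leaves implicit --- that $V^{*}$ is a skew vector space over $D^{\mathrm{op}}$, and that both identities $(W_{1}+W_{2})^{\perp}=W_{1}^{\perp}\cap W_{2}^{\perp}$ and $(W_{1}\cap W_{2})^{\perp}=W_{1}^{\perp}+W_{2}^{\perp}$ hold for the algebraic dual without any finite-dimensionality hypothesis, the latter being exactly what makes the restriction to a sublattice $L$ a genuine lattice embedding of $L^{d}$ rather than a mere order anti-embedding.
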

\begin{proof}
\leavevmode

The map of a subspace $W$ of a skew vector space $V$ into the
annihilator $W^0$ of $W$, which is a subspace of the dual $V^*$ of $V$,
is order-reversing and embeds the lattice of subspaces of $V$ into the
lattice of subspaces of $V^*$.  (If $V$ is finite-dimensional, the
mapping is a dual-isomorphism.)
\end{proof}

\begin{question} \label{q:skew-abelian}
Are there lattices of subgroups of Abelian groups that cannot be
embedded into the lattice of subspaces of a skew vector space?
Of those lattices, are there any that are finite?
\end{question}

\begin{question} \label{q:skew-sent}
Can the class of sublattices of subspaces of skew vector spaces be
specified by a set of sentences?  By a set of identities?  
By a finite set of sentences?  By a finite set of identities?
\end{question}

\paragraph{Subspaces of a vector space}

\begin{theorem} (the Veblen--Young theorem)
For any projective space in which Desargues' theorem and Pappus'
hexagon theorem hold, the
incidence lattice of the projective space is isomorphic to the
subspace lattice of a vector space.
\end{theorem}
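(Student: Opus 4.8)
The plan is to reduce to the Desarguesian form of the Veblen--Young theorem, stated immediately above, and then to upgrade the coordinatizing division ring to a field using the additional Pappus hypothesis.

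Since Desargues' theorem holds by hypothesis, I would first invoke the preceding theorem to produce a division ring $D$ together with an isomorphism between the incidence lattice of the projective space and the subspace lattice of a skew vector space $V$ over $D$. A \emph{vector space} is a module over a field, i.e.\ over a \emph{commutative} division ring; hence the entire burden of the proof is to show that $D$ is commutative, after which $V$ is a genuine vector space and the isomorphism already in hand is precisely the one asserted. In particular we do not need Hessenberg's theorem (that Pappus implies Desargues), since Desargues is given to us outright.

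The key step is the classical coordinate equivalence: the projective plane coordinatized by $D$ satisfies Pappus' hexagon theorem for every configuration if and only if $D$ is commutative. I would prove this by fixing a projective plane inside the space (assuming $\dim \geq 2$, since Pappus is a planar statement; in dimension $0$ or $1$ the hypothesis is vacuous and the conclusion trivial), introducing homogeneous coordinates over $D$ --- available because the Desarguesian structure coordinatizes the geometry --- and placing a generic Pappus configuration of six points on two lines. Computing the three ``diagonal'' intersection points and imposing their collinearity, which is exactly the conclusion of Pappus, reduces to an identity $ab = ba$ in $D$ for the two parameters locating the configuration. For the implication Pappus $\Rightarrow$ commutativity, any pair $a, b \in D$ with $ab \neq ba$ yields a configuration whose diagonal points are non-collinear, contradicting Pappus; the converse is immediate once $D$ is commutative. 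In dimension $> 2$ it then suffices to run this computation inside a single plane, since all planes are coordinatized by the same $D$.

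The main obstacle is this coordinate computation. One must set up the linear coordinate ring correctly and, crucially, track the \emph{order} of all products throughout --- noncommutativity being the whole point --- so that the collinearity condition lands as $ab = ba$ rather than some weaker or degenerate relation. Care is also needed to confirm that when $ab \neq ba$ the three diagonal points genuinely fail to be collinear and do not coincide or otherwise degenerate, so that a bona fide violation of Pappus results. Once $D$ is shown to be commutative, $V$ is a vector space and the proof is complete.
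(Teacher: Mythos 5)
The paper contains no proof of this theorem: like its Desarguesian companion stated immediately above it, it is quoted as a classical result in this survey, so there is no internal argument to compare yours against. Your proposal follows the standard classical route, and its outline is sound: invoke the preceding theorem to coordinatize the geometry, obtaining a division ring $D$ and an isomorphism with the subspace lattice of a skew vector space $V$ over $D$; then reduce the whole problem to showing $D$ is commutative, which is Hilbert's classical theorem that a Desarguesian geometry satisfies Pappus' hexagon theorem if and only if its coordinatizing division ring is commutative. You are also right that Hessenberg's theorem is not needed, since Desargues is hypothesized rather than derived from Pappus.

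Two caveats. First, essentially all of the mathematical content lies in the step you defer: placing a Pappus hexagon in homogeneous coordinates over $D$, showing that collinearity of the diagonal points is equivalent to $ab=ba$, and verifying that when $ab\neq ba$ the configuration genuinely violates Pappus rather than degenerating. As written, your text is a reduction of one classical theorem to another; to count as a proof it must either carry out that computation or cite it explicitly (Hilbert's \emph{Grundlagen der Geometrie} or Artin's \emph{Geometric Algebra}). Second, your dismissal of dimensions $0$ and $1$ as ``vacuous hypothesis, trivial conclusion'' is not right under lattice-theoretic definitions of projective space: on a degenerate line both hypotheses do hold vacuously, yet a line with $n$ points (for finite $n$) is the subspace lattice of a vector space only when $n-1$ is a prime power, so the conclusion can fail there. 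This defect is inherited from the theorem as stated (and equally afflicts its skew predecessor in the paper), but a careful proof should impose the usual non-degeneracy conventions or restrict to dimension at least $2$ explicitly rather than claim the low-dimensional cases are trivial.
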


\begin{question} \label{q:vector-finite}
Is there a lattice embeddable into the subspaces of a skew
vector space which cannot be embedded into the subspaces of a vector
space?
If so, is there such a lattice that is finite?
\end{question}

\begin{remark} \cite{Day1981}
The incidence lattice of subspaces of the 3-dimensional skew vector
space over the quaternions (the (non-Pappian) projective plane over the
quaternions) can be embedded into the incidence lattice of subspaces
of the 6-dimensional vector space over the complex numbers (the
(Pappian) 5-dimensional projective space over the complex numbers).
\end{remark}

\begin{theorem} \label{th:vector-dual}
The class of sublattices of subspaces of a vector space is self-dual.
\end{theorem}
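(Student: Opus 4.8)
The plan is to reuse the annihilator construction from the proof of Theorem~\ref{th:skew-dual}, observing that in the commutative case the construction stays inside the class of vector spaces. Suppose $M$ is (isomorphic to) a sublattice $L$ of the subspace lattice $\mathrm{Sub}(V)$ of a vector space $V$ over a field $K$. First I would send each $W \in L$ to its annihilator $W^0 = \{f \in V^* : f(W) = 0\}$ in the dual space $V^*$. This map is order-reversing, and it always satisfies $(W_1 + W_2)^0 = W_1^0 \cap W_2^0$, so joins in $L$ become meets in $\mathrm{Sub}(V^*)$; in finite dimensions it is a genuine dual-isomorphism of subspace lattices, and in general it gives an order-reversing embedding of $L^d$ into $\mathrm{Sub}(V^*)$, exactly as in Theorem~\ref{th:skew-dual}.

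The one new ingredient beyond Theorem~\ref{th:skew-dual} is the coefficient ring of $V^*$. For a skew vector space over a division ring $D$ the dual is a module over the opposite ring $D^{\mathrm{op}}$, so the skew case needs closure of the class under $D \mapsto D^{\mathrm{op}}$. Here $D = K$ is commutative, so $K^{\mathrm{op}} = K$ and $V^*$ is again a vector space over the \emph{same} field $K$. Hence $M^d \cong L^d$ embeds into $\mathrm{Sub}(V^*)$, the subspace lattice of a $K$-vector space, which returns $M^d$ to the class and establishes self-duality.

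The main obstacle is stating the embedding claim honestly in infinite dimensions, where $W^{00}$ need not equal $W$, so the annihilator map can fail injectivity or meet-preservation. I would dispatch this precisely as Theorem~\ref{th:skew-dual} does --- retaining at least an order-reversing embedding of $L^d$, and the full dual-isomorphism when $V$ is finite-dimensional. In fact the cleanest phrasing is to note that a $K$-vector space is a skew vector space over $K$ whose opposite coefficient ring is again $K$, so the present theorem is just the specialization of Theorem~\ref{th:skew-dual} to the commutative case.
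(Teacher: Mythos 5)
Your proposal takes essentially the same route as the paper's own proof: both rest on the annihilator map $W \mapsto W^0$ from $\mathrm{Sub}(V)$ into $\mathrm{Sub}(V^*)$, order-reversing, carrying joins to meets and meets to joins, hence exhibiting the dual of the given sublattice inside the subspace lattice of $V^*$ (and a full dual-isomorphism when $V$ is finite-dimensional). Your added observation that $V^*$ is a vector space over the \emph{same} field because $K^{\mathrm{op}} = K$ --- the point where the skew case of Theorem~\ref{th:skew-dual} instead lands in modules over $D^{\mathrm{op}}$ --- is a detail the paper leaves implicit, and it only makes the argument more complete.
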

\begin{proof}
\leavevmode

The map of a subspace $W$ of a vector space $V$ into the
annihilator $W^0$ of $W$, which is a subspace of the dual $V^*$ of $V$,
is order-reversing and embeds the lattice of subspaces of $V$ into the
lattice of subspaces of $V^*$.  (If $V$ is finite-dimensional, the
mapping is a dual-isomorphism.)
\end{proof}

\begin{question} \label{q:vector-sent}
Can the class of sublattices of subspaces of vector spaces be
specified by a set of sentences?  By a set of identities?  
By a finite set of sentences?  By a finite set of identities?
\end{question}

\paragraph{Distributive}

\begin{theorem} \label{th:dist}
For a lattice $L$, the following conditions are equivalent:
\begin{enumerate}

\item \label{th:dist:11} \cite{Birk1967}*{ch.~I sec.~6 Th.~9}\cite{Gratz1996}*{Ch.~I sec.~4 Lem.~10}
$L$ satisfies
$$ x \wedge (y \vee z) = (x \wedge y) \vee (x \wedge z) $$

\item \label{th:dist:12} \cite{Birk1967}*{ch.~I sec.~6 Th.~9}\cite{Gratz1996}*{Ch.~I sec.~4 Lem.~10}
$L$ satisfies
$$ x \vee (y \wedge z) = (x \vee y) \wedge (x \vee z) $$

\item \label{th:dist:13} \cite{Gratz1996}*{Ch.~I sec.~4 Lem.~10}
$L$ satisfies
$$ (x \vee y) \wedge z \leq x \vee (y \wedge z) $$

\item \label{th:dist:14} \cite{Pal2001}*{sec.~2}
$L$ satisfies
$$ (x \vee y) \wedge (x \vee z) \wedge (y \vee z) =
(x \wedge y) \vee (x \wedge z) \vee (y \wedge z) $$

\item \label{th:dist:15} \cite{Birk1967}*{ch.~II sec.~7 Th.~13 Cor.}
$L$ satisfies
$$ a \wedge x = a \wedge y \TNand a \vee x = a \vee y \TNimplies x=y $$
(That is, relative complements be unique.)

\item \cite{Birk1967}*{ch.~II sec.~7 Th.~13}
$L$ is modular and does not contain the ``diamond lattice'' (which is called both
$M_3$ and $M_5$)\footnote{The older usage seems to be to call it $M_5$
after the number of elements.  The newer usage seems to be to call it
$M_3$ after the number of rank-1 elements, extending to $M_n$ being
the diamond lattice with $n$ rank-1 elements.}
as a sublattice:
\\
\begin{tikzcd}[column sep=0.5em,row sep=1em,every arrow/.append style={dash}]
  & \bullet \ar[dl] \ar[d] \ar[dr] \\
\bullet \ar[dr] & \bullet \ar[d] & \bullet \ar[dl] \\
  & \bullet
\end{tikzcd} \\
\end{enumerate}
\end{theorem}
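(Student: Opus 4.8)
The plan is to make condition~\ref{th:dist:11} the hub: I would prove that \ref{th:dist:11} implies each of the other five conditions directly, and then recover \ref{th:dist:11} from each of the other five, so that all six become equivalent through \ref{th:dist:11}. The forward implications out of \ref{th:dist:11} use only distributivity and the absorption laws. The equivalence \ref{th:dist:11}~$\Leftrightarrow$~\ref{th:dist:12} is the classical self-duality of distributivity: assuming \ref{th:dist:11}, expand $(x \vee y) \wedge (x \vee z)$ by applying \ref{th:dist:11} twice together with absorption to reach $x \vee (y \wedge z)$, and conversely by the dual computation. Condition \ref{th:dist:13} then drops out of \ref{th:dist:12} immediately, since $z \le x \vee z$ gives $(x \vee y) \wedge z \le (x \vee y) \wedge (x \vee z) = x \vee (y \wedge z)$. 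For \ref{th:dist:11}~$\Rightarrow$~\ref{th:dist:14} I would expand the meet side of the median identity by distributivity and collapse the redundant terms with absorption. For \ref{th:dist:11}~$\Rightarrow$~\ref{th:dist:15} I would run the standard uniqueness-of-relative-complements chain $x = x \wedge (x \vee a) = x \wedge (y \vee a) = (x \wedge y) \vee (x \wedge a) = (x \wedge y) \vee (y \wedge a) = y \wedge (x \vee a) = y \wedge (y \vee a) = y$, where each step invokes only absorption, one of the two hypotheses, or \ref{th:dist:11}. Finally \ref{th:dist:11}~$\Rightarrow$~the last condition holds because distributivity yields the modular identity (a form of Theorem~\ref{th:modular}) and because $M_3$ violates \ref{th:dist:11}, so a distributive lattice, all of whose sublattices are distributive, cannot contain it.

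For the reverse implications I would route everything uniformly through the forbidden-sublattice characterization already supplied by the survey: a nonmodular lattice contains $N_5$ (Theorem~\ref{th:modular}, item~\ref{th:modular:i18}), and a modular nondistributive lattice contains $M_3$ (Birkhoff's Theorem~13, \cite{Birk1967}*{ch.~II sec.~7 Th.~13}); hence a lattice is distributive exactly when it contains neither $N_5$ nor $M_3$. Each of \ref{th:dist:12}, \ref{th:dist:13}, \ref{th:dist:14}, and \ref{th:dist:15}, and the last condition, is an identity or a Horn sentence whose meets and joins are computed identically in any sublattice, so each passes to sublattices. The key observation is that every one of these conditions fails in both $N_5$ and $M_3$: a direct check on the two five-element lattices settles this, using for the identity-type conditions the substitution $x = a$, $y = c$, $z = b$ in the pentagon and $x = p$, $y = q$, $z = r$ in the diamond, and for \ref{th:dist:15} the fact that the pentagon element $c$ has two distinct complements $a, b$ while each atom of the diamond has the other two as distinct complements. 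Consequently any lattice satisfying one of these conditions contains neither $N_5$ nor $M_3$ and is therefore distributive. For the last condition the argument is shorter still: modularity already excludes $N_5$ and the hypothesis excludes $M_3$, so distributivity follows at once.

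The genuinely substantive input is the pair of forbidden-sublattice theorems, which I take as given from the excerpt; granting those I expect no real obstacle, only bookkeeping. The main points needing care are the median computation \ref{th:dist:11}~$\Rightarrow$~\ref{th:dist:14}, where the three symmetric terms must be expanded and recombined without tacitly assuming more than \ref{th:dist:11}, and the relative-complement chain \ref{th:dist:11}~$\Rightarrow$~\ref{th:dist:15}, where I would verify that each equality draws only on absorption, the two hypotheses, and \ref{th:dist:11} itself. The remaining labor is the routine verification that each of the non-hub conditions both survives passage to sublattices and is violated in $N_5$ and in $M_3$, which closes every reverse arrow.
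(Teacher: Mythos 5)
Your proof is correct, and the delicate points all check out: the derivation of \ref{th:dist:12} from \ref{th:dist:11} by two applications of distributivity plus absorption, the median expansion for \ref{th:dist:14}, the seven-step relative-complement chain for \ref{th:dist:15}, and the failure of each of conditions \ref{th:dist:12}--\ref{th:dist:15} in both $N_5$ (with $x=a$, $y=c$, $z=b$, where $0<a<b<1$ and $c$ is incomparable to both) and $M_3$ (with the three atoms) are all as you describe, and each of those conditions is an identity or universal Horn sentence, hence inherited by sublattices, so the forbidden-sublattice route closes every reverse arrow. Be aware, however, that the paper itself contains no proof of this theorem: it is a survey, and each item of the statement simply carries citations to Birkhoff, Gr\"atzer, and P\'alfy, where the equivalences are established. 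So the real comparison is between your self-contained argument and the cited literature. Your hub-and-spoke structure, with \ref{th:dist:11} as hub and a uniform return route via exclusion of $N_5$ and $M_3$, reduces all reverse implications to exactly two nontrivial imported facts: that a nonmodular lattice contains $N_5$ (restated in the paper as Theorem~\ref{th:modular}, item~\ref{th:modular:i18}) and that a modular nondistributive lattice contains $M_3$ (\cite{Birk1967}, Theorem 13, quoted in the paper's survey section); indeed Birkhoff's own corollary to his Theorem 13 obtains condition \ref{th:dist:15} in precisely this way, so your route for that item matches the source. What your approach buys is economy and verifiability---only two imported theorems, the rest equational bookkeeping; what the paper's citation-only treatment buys is provenance, which is its purpose as a survey. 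The one thing to acknowledge is that the two forbidden-sublattice theorems carry essentially all the depth and you take them as given---but since the paper itself only cites them, this is not a gap relative to the paper's own standard of rigor.
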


\begin{definition}
A lattice satisfying any of the conditions of theorem~\ref{th:dist} is
called \emph{distributive}.
\end{definition}

\begin{theorem}
There are finite lattices that are the subspace lattice of a vector
space that are not distributive.
\end{theorem}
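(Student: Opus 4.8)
The plan is to produce a concrete small example. I would take $V = \mathbb{F}_2^2$, the two-dimensional vector space over the field with two elements, and show that its entire subspace lattice is both finite and non-distributive. This is the minimal such example, and it has the pleasant feature that the whole subspace lattice — not merely a sublattice — is non-distributive.

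First I would enumerate the subspaces of $V$. Besides the zero subspace $\hatzero = \{0\}$ and the whole space $V$, the only proper nontrivial subspaces are the lines through the origin. Since $V$ has exactly three nonzero vectors, namely $(1,0)$, $(0,1)$, and $(1,1)$, and each spans a distinct one-dimensional subspace, there are exactly three lines, which I would call $a$, $b$, $c$. Hence the subspace lattice has exactly five elements and is finite. I would then record its structure: the three lines are pairwise incomparable, each covers $\hatzero$ and is covered by $V$; any two distinct lines meet in $\hatzero$, and any two distinct lines span $V$ and so join to $V$. This is precisely the diamond $M_3$ depicted in theorem~\ref{th:dist}.

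Finally I would verify that distributivity fails directly via item~\ref{th:dist:12} of theorem~\ref{th:dist}: one computes $a \vee (b \wedge c) = a \vee \hatzero = a$, whereas $(a \vee b) \wedge (a \vee c) = V \wedge V = V$; since $a \neq V$, the distributive law is violated, so the subspace lattice of $V$ is not distributive. Equivalently, the lattice literally equals $M_3$, and a modular lattice (subspace lattices are modular by theorem~\ref{th:mod-proj}) that contains $M_3$ as a sublattice is non-distributive by the $M_3$ criterion in theorem~\ref{th:dist}.

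There is no genuine obstacle here; the only point requiring care is that the chosen lattice be the \emph{full} subspace lattice rather than merely a sublattice of one, which the choice $V = \mathbb{F}_2^2$ guarantees since its entire lattice is $M_3$. The same argument works for any vector space of dimension at least two over any field, since dimension two already yields at least three distinct lines whenever the field is nontrivial, and any three such lines form an $M_3$.
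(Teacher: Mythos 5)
Your proposal is correct and is essentially the paper's own proof: the paper likewise exhibits the subspace lattice of the two-dimensional vector space over $\mathbb{F}_2$ (written there as $PG(\mathbb{F}_2,2)$), whose five-element lattice is exactly the diamond $M_3$ and hence non-distributive by theorem~\ref{th:dist}. Your version merely adds the explicit computation $a \vee (b \wedge c) = a \neq V = (a \vee b) \wedge (a \vee c)$, which the paper leaves implicit in its Hasse diagram.
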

\begin{proof}
\leavevmode

Specifically, consider the subspace lattice of $PG(\mathbb{F}_2, 2)$, where
${<} v_1, v_2, \ldots {>}$ denotes the subspace spanned by $v_1, v_2, \ldots$:
\\
\begin{tikzcd}[column sep=0.5em,row sep=1em,every arrow/.append style={dash}]
  & {<}(1,0), (0,1){>} \ar[dl] \ar[d] \ar[dr] \\
{<}(1,0){>} \ar[dr] & {<}(0,1){>} \ar[d] & {<}(1,1){>} \ar[dl] \\
  & {<}\ {>}
\end{tikzcd} \\
\end{proof}

\begin{theorem}
The class of distributive lattices is defined by a finite number of
identities, specifically the axioms of lattices and any one of the
identities \ref{th:dist:11}, \ref{th:dist:12}, \ref{th:dist:13},
or \ref{th:dist:14} of theorem~\ref{th:dist}.
Because such set of identities is self-dual (in the case
of \ref{th:dist:13} or \ref{th:dist:14}), or dual to an equivalent set
of identities (in the case of \ref{th:dist:11} or \ref{th:dist:12}),
the class of distributive lattices is self-dual.
\end{theorem}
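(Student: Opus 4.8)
The plan is to treat the two claims --- finite equational definability and self-duality --- separately, relying on theorem~\ref{th:dist} and on the already-established fact that the lattice axioms form a finite, self-dual set of identities. First I would dispose of finite definability. Theorem~\ref{th:dist} already asserts that each of items~\ref{th:dist:11}--\ref{th:dist:14} is equivalent, in \emph{any} lattice, to the others; since a lattice is distributive exactly when it satisfies any one of them, the lattice axioms together with any single chosen item form a finite set of identities whose models are precisely the distributive lattices. The only point to check is that item~\ref{th:dist:13}, written as the inequality $(x \vee y) \wedge z \leq x \vee (y \wedge z)$, really is an identity; but by the earlier theorem that $P \leq Q$ abbreviates $P \vee Q = Q$, it is. So this half needs nothing beyond a citation of theorem~\ref{th:dist}.

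For self-duality I would first record the governing principle: a class defined by a set $\Sigma$ of identities is self-dual as soon as the dual set $\Sigma^d$ --- formed by interchanging $\vee$ and $\wedge$ and reversing each $\leq$ --- defines the same class, because a lattice $L^d$ satisfies an identity $\sigma$ exactly when $L$ satisfies $\sigma^d$. Since the lattice axioms already form a self-dual set, it remains only to dualize the chosen distributive identity. Choosing item~\ref{th:dist:14}, dualization simply exchanges the two sides of the equation, so the identity is literally self-dual and $\Sigma = \Sigma^d$. Choosing item~\ref{th:dist:13}, dualization gives $x \wedge (y \vee z) \leq (x \wedge y) \vee z$, which is the original inequality after swapping the variables $x$ and $z$ and applying commutativity; hence it too is self-dual up to renaming of its universally quantified variables. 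Either choice makes the whole defining set self-dual, so the class is self-dual.

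For completeness I would also give the alternative route through items~\ref{th:dist:11} and~\ref{th:dist:12}. These two identities are exact duals of one another, and theorem~\ref{th:dist} shows that each alone (with the lattice axioms) defines distributivity; therefore the dual of the set \{lattice axioms, item~\ref{th:dist:11}\} is the set \{lattice axioms, item~\ref{th:dist:12}\}, which defines the same class, again yielding self-duality. I do not anticipate any genuine obstacle in this argument, since it is essentially a repackaging of theorem~\ref{th:dist}; the only step warranting care is the dualization bookkeeping for item~\ref{th:dist:13}, where one must remember that renaming the universally quantified variables leaves the identity unchanged, so that the syntactically different dual is correctly recognized as the same law.
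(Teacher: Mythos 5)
Your proposal is correct and follows essentially the same route as the paper, which embeds its justification directly in the theorem statement: finite definability comes from theorem~\ref{th:dist} (lattice axioms plus any one equivalent identity), and self-duality comes from the chosen identity being self-dual (items~\ref{th:dist:13}, \ref{th:dist:14}) or dual to an equivalent one (items~\ref{th:dist:11}, \ref{th:dist:12}). Your additional bookkeeping --- invoking the $P \leq Q$ abbreviation theorem for item~\ref{th:dist:13} and checking that its dual is the same inequality after renaming variables --- is a welcome verification of details the paper leaves implicit.
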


\begin{theorem} (Birkhoff's Representation Theorem)
\cite{Birk1967}*{ch.~III sec.~3 Th.~3}\cite{Stan2012}*{Prop. 3.4.3}
Let $L$ be a finite or finitary distributive lattice.  Let
$X$ be the poset of join-irreducible elements of $L$ (from which we exclude
$\hatzero$).
Then $L$ is isomorphic to $\boldsymbol{2}^X$ (the set of weakly order-preserving
functions from $X$ to $\boldsymbol{2}$) or equivalently, $J_f(X)$ (the
set of finite order ideals of $X$).
\end{theorem}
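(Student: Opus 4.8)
The plan is to construct an explicit isomorphism $\phi\colon L \to J_f(X)$, verify that it is a bijective lattice homomorphism, and defer the purely order-theoretic identification of $J_f(X)$ with $\boldsymbol{2}^X$ to the end. I would define $\phi(a) = \{p \in X : p \leq a\}$, the set of join-irreducibles lying below $a$. This set is visibly a down-set (order ideal) of $X$, and because $L$ is finitary it is contained in the finite interval $[\hatzero, a]$ and hence is finite, so $\phi$ really does map into $J_f(X)$.

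First I would establish the structural lemma that every element is the join of the join-irreducibles beneath it, i.e.\ $a = \bigvee \phi(a)$; this needs no distributivity. Working inside the finite lattice $[\hatzero, a]$ — whose join-irreducibles are exactly the join-irreducibles of $L$ lying below $a$, since join-irreducibility of $p$ depends only on elements $\leq p$ and such joins agree in $L$ and in the interval — I would argue by well-founded induction: if $a$ is $\hatzero$ or join-irreducible the claim is immediate, and otherwise $a = b \vee c$ with $b, c < a$, so applying the inductive hypothesis to $b$ and $c$ expresses $a$ as a join of join-irreducibles, each $\leq a$.

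Next I would check that $\phi$ is a homomorphism. Preservation of meets, $\phi(a \wedge b) = \phi(a) \cap \phi(b)$, holds in any lattice since $p \leq a \wedge b$ iff $p \leq a$ and $p \leq b$. Preservation of joins is where distributivity enters, through the key fact that in a distributive lattice every join-irreducible $p$ is \emph{join-prime}: if $p \leq a \vee b$ then $p = p \wedge (a \vee b) = (p \wedge a) \vee (p \wedge b)$, so join-irreducibility forces $p \leq a$ or $p \leq b$. This yields $\phi(a \vee b) \subseteq \phi(a) \cup \phi(b)$, the reverse inclusion being trivial, so $\phi$ preserves joins. Injectivity then follows from the structural lemma, since $\phi(a) \subseteq \phi(b)$ forces $a = \bigvee \phi(a) \leq \bigvee \phi(b) = b$, so $\phi$ is in fact an order-embedding. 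For surjectivity, given a finite order ideal $I$ I would set $a = \bigvee I$ (a finite, hence existent, join) and show $\phi(a) = I$: the inclusion $I \subseteq \phi(a)$ is clear, and conversely $p \leq \bigvee I$ together with the join-prime property extended to finite joins gives $p \leq q$ for some $q \in I$, whence $p \in I$ because $I$ is a down-set.

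The main obstacle — really the only place the hypothesis does essential work — is the join-preservation step, which is precisely where distributivity is indispensable; everything else is bookkeeping made uniform by the finitary hypothesis, which guarantees both that $\phi(a)$ is finite and that the decomposition $a = \bigvee \phi(a)$ reduces to the finite case. Finally, to close the statement I would note the routine order-isomorphism between $J_f(X)$ and $\boldsymbol{2}^X$: a weakly order-preserving function $f\colon X \to \boldsymbol{2}$ corresponds to the order ideal $f^{-1}(\{0\})$ (equivalently, the complementary up-set $f^{-1}(\{1\})$), and in the finitary case this matches $J_f(X)$ under the appropriate finiteness condition, an identification obtained by directly unwinding the definitions.
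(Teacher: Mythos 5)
The paper offers no proof of this theorem at all --- it is stated as a pure citation to Birkhoff \cite{Birk1967}*{ch.~III sec.~3 Th.~3} and Stanley \cite{Stan2012}*{Prop. 3.4.3} --- so there is no internal argument to compare against; your proof is correct and is essentially the standard argument from those references: the map $\phi(a) = \{p \in X : p \leq a\}$, the lemma that every element is the join of the join-irreducibles below it (proved by induction inside the finite principal ideal, which is exactly where the finitary hypothesis is used), and the observation that distributivity makes every join-irreducible join-prime, which is the sole ingredient needed for $\phi$ to preserve joins and for surjectivity onto $J_f(X)$.

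One point deserves explicit care, and it is a flaw in the theorem statement rather than in your argument: for an infinite finitary lattice, $\boldsymbol{2}^X$ (all weakly order-preserving functions $X \to \boldsymbol{2}$) corresponds to \emph{all} order ideals of $X$ under $f \mapsto f^{-1}(0)$, and this is strictly larger than $J_f(X)$ when $X$ is infinite. So the clean conclusion is $L \cong J_f(X)$ in the finitary case, with $L \cong \boldsymbol{2}^X$ valid only when $L$ (equivalently $X$) is finite, unless one reinterprets $\boldsymbol{2}^X$ as the functions whose $0$-fiber is finite. Your closing hedge (``under the appropriate finiteness condition'') is the right instinct; stating that restriction explicitly would make the proof complete as written.
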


\paragraph{Complemented}

We are not primarily interested in complemented lattices, but we
include this interesting result:

\begin{theorem} \cite{Jons1954a}*{sec.~1 Th.~2.14}\cite{Jons1954c}
Within complemented modular lattices, being Arguesian is equivalent to
being isomorphic to a lattice of subspaces of a skew vector space.
Consequently, for complemented modular lattices, all properties in the
hierarchy we have been studying from Arguesian to being isomorphic to
a lattice of subspaces of a skew vector space are equivalent.
\end{theorem}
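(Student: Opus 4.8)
The plan is to treat this as Jónsson's Theorem~2.14 (\cite{Jons1954a}) and to prove the one genuinely hard implication, \emph{Arguesian complemented modular $\Rightarrow$ coordinatized by a division ring}, closing every other equivalence with implications already established in the survey. All the ``easy'' links are in hand: by Theorem~\ref{th:Dn-misc} every linear lattice satisfies all $D_n$ and is therefore Arguesian; a skew vector space is an Abelian group under $+$, so its subspace lattice \emph{is} the subgroup lattice of that Abelian group, which is linear by Theorem~\ref{th:normal-linear}. Within complemented modular lattices this already yields
$$ \text{skew vector space} \Rightarrow \text{Abelian subgroups} \Rightarrow \text{linear} \Rightarrow \text{all } D_n \Rightarrow \text{Arguesian}, $$
so it suffices to show that the weakest condition forces the strongest, collapsing the whole hierarchy.

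The reverse (easy) direction deserves one line to fix ideas: for a skew vector space $V$ over a division ring $D$, each subspace $W$ yields the equivalence relation $u \sim_W v \iff u-v \in W$, and these relations commute because $(V,+)$ is Abelian; hence the subspace lattice is type~$1$ representable, so linear (Definition~\ref{def:linear}), hence Arguesian, complemented, and modular. No subtlety arises here.

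For the hard direction I would invoke the coordinatization theory of projective geometries. By the classical structure theory of complemented modular lattices (Jónsson's ``projective lattices'', \cite{Jons1954a}*{Def.~1.1}), $B$ is the lattice of flats of a projective geometry and decomposes as a product of \emph{irreducible} such geometries; being an identity, the Arguesian law descends to each factor. For a factor of projective dimension $\ge 2$ one checks Desargues' theorem in the geometry: for a plane this is precisely Theorem~\ref{th:proj-arg}, and for dimension $\ge 3$ it holds automatically from incidence geometry. The Veblen--Young theorem then coordinatizes each such factor as the subspace lattice of a skew vector space over a division ring, realizing $B$ as the flat lattice of an Arguesian projective geometry.

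The hard part, and the place I would be most careful, is the low-dimensional and reducible factors, where the phrase ``a skew vector space'' (single, connected) is too narrow. Dimension~$0$ (a point) gives the two-element chain and is harmless. Dimension~$1$ is the real obstacle: such a factor is a projective line with lattice $M_\kappa$, and $M_\kappa$ is Arguesian --- indeed linear --- for \emph{every} $\kappa \ge 3$ (realize it by $\kappa$ pairwise commuting complementary partitions built from the rows, columns, and MOLS of a square grid), yet $M_\kappa$ is the subspace lattice of a skew vector space only when $\kappa = |D|+1$ for some division ring $D$. Thus $M_7$ is an Arguesian complemented modular lattice that is \emph{not} literally a subspace lattice of any skew vector space, since no division ring has order $6$; similarly products over distinct division rings, and degenerate reducible geometries such as the four-element Boolean lattice, escape the singular phrasing. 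Jónsson's original condition (his Theorem~2.14(v), ``subspace lattice of an Arguesian projective space'') absorbs these as legitimate, vacuously Desarguesian, factors, so his equivalence is correct as stated; the reformulation in terms of ``skew vector space'' must therefore be read up to these one-dimensional and reducible pieces --- equivalently, with ``Arguesian projective geometry'' in place of ``skew vector space'' --- and pinning down that boundary precisely, together with citing the heavy classical coordinatization input, is the crux of a careful write-up.
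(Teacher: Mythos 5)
The first thing to know is that the paper does not prove this theorem at all: it is a bare citation of J\'onsson's Theorem~2.14 \cite{Jons1954a} and of \cite{Jons1954c}, in which the geometric condition reads ``isomorphic to a lattice of subspaces of an Arguesian (possibly degenerate) projective space,'' \emph{not} ``of a skew vector space.'' So the substantive issue is exactly the one you raise: whether that rewording is harmless. Your verdict (it is not) is correct, but your evidence needs adjusting. In this paper, as in J\'onsson's parallel conditions (ii)--(iv), ``isomorphic to a lattice of subspaces'' means isomorphic to a \emph{sublattice} of the full subspace lattice; under that intended reading your $M_7$ and four-element Boolean examples are harmless ($M_7$ is a sublattice of the subspace lattice of $\mathbb{F}_7^2$, and the Boolean lattice of $M_3$), and only your aside about ``products over distinct division rings'' does real work. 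It does, however, kill the statement outright: the product of the subspace lattices of $\mathbb{F}_2^3$ and $\mathbb{F}_3^3$, which is precisely the subgroup lattice of $\mathbb{Z}_6^3$, is a finite complemented modular lattice, linear by Theorem~\ref{th:normal-linear} and hence Arguesian by Theorem~\ref{th:Dn-misc}, yet it embeds in the subspace lattice of no vector space $V$ over a single division ring $D$. Indeed, such an embedding would restrict (inside the interval between the images of the least and greatest elements, which is again a subspace lattice over $D$) to embeddings of both factors over the same $D$; an embedding of the $\mathbb{F}_2^3$ (Fano) factor carries its canonical order-$3$ frame to a frame whose von Staudt coordinate ring is $\operatorname{End}_D(W)$ for some $W \neq 0$, and since the coordinate-ring operations are lattice polynomials in the frame, it induces a unital ring embedding of $\mathbb{F}_2$ into $\operatorname{End}_D(W)$, forcing $D$ to have characteristic $2$; the $\mathbb{F}_3^3$ factor forces characteristic $3$ the same way. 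So the theorem as printed is false, your proposed repair --- restoring J\'onsson's ``Arguesian projective space'' --- is the right one, the collapse of the hierarchy genuinely stops at ``sublattices of subgroups of an Abelian group,'' and as a byproduct this example answers Question~\ref{q:skew-abelian} affirmatively.

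Separately, your proof of the hard direction has a genuine gap at its very first step: a complemented modular lattice need not be complete or atomistic, hence need not be ``the lattice of flats of a projective geometry'' and need not decompose into irreducible geometric factors. Von Neumann's continuous geometries are complemented, modular, and Arguesian (coordinatized by regular rings, they are sublattices of submodule lattices, hence linear), yet have no atoms at all; the sublattice of finite- and cofinite-dimensional subspaces of an infinite-dimensional vector space is an atomistic but incomplete example. The hypotheses of J\'onsson's Definition~1.1 (completeness, atomisticity, atomic compactness) are exactly what your decomposition argument needs and exactly what a general $B$ lacks; the classical route, which J\'onsson follows, supplies them via Frink's embedding theorem --- every complemented modular lattice embeds in a projective lattice --- together with the non-trivial verification that the Arguesian law in $B$ forces Desargues' theorem in the ambient Frink geometry (see \cite{Herr2013} for a modern account). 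Your easy direction and your handling of the irreducible factors of dimension at least $2$ are fine, but without the Frink step your argument covers only the complete atomistic case.
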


\vfill   

\section{Summary of properties} \label{sec:summary-chart}

{
\renewcommand{\arraystretch}{1.5}
\small
\newcommand{\cwd}{0.125\textwidth}

\begin{tabular}{|p{\cwd}|p{\cwd}|p{\cwd}|p{\cwd}|p{\cwd}|p{\cwd}|p{\cwd}|}
\hline
\emph{Property} &
\emph{Strictly stronger than the preceding property?} \raggedright &
\emph{Strictly stronger on finite lattices?} \raggedright &
\emph{Self-dual?} &
\emph{Defined by sentences?} \raggedright &
\emph{Defined by identities?} \raggedright &
\emph{Defined by a finite number of sentences/\linebreak[0]identities?}
\raggedright \tabularnewline  
\hline
General lattice & --- & --- & yes & yes & yes & yes \\
\hline
Modular & yes & yes & yes & yes & yes & yes \\
\hline
Arguesian & yes & yes & yes & yes & yes & yes \\
\hline
Higher-order Arguesian &
some are, unknown whether all \raggedright &
some are, unknown whether all \raggedright & yes & yes & yes & yes \\
\hline
Unbounded-order Arguesian \raggedright & yes & yes & yes & yes & yes & no \\
\hline
Linear & unknown & unknown & unknown & yes & unknown & no \\
\hline
Sublattices of normal subgroups of a group \raggedright &
yes & yes & unknown & unknown & unknown & unknown \\
\hline
Sublattices of subgroups of an Abelian group \raggedright &
yes & yes & unknown & unknown & unknown & unknown \\
\hline
Subspaces of a skew vector space \raggedright &
unknown & unknown & yes & unknown & unknown & unknown \\
\hline
Subspaces of a vector space \raggedright &
unknown & unknown & yes & unknown & unknown & unknown \\
\hline
Distributive & yes & yes & yes & yes & yes & yes \\
\hline
\end{tabular}
}

\section{Summary of open questions} \label{sec:summary-qs}

We collect the open questions here.


\par \hbox{}

\noindent \textbf{Question \ref{q:finite-algebra}\@.} {\itshape   \cite{Pal2001}*{sec.~3 Prob.~3.4}
It is unknown whether for every finite lattice $L$ there exists a
finite algebra $\boldsymbol{A}$ such that
$\textnormal{Con } \boldsymbol{A} \cong L$.
Is this true?

} \par \hbox{}

\noindent \textbf{Question \ref{q:finite-type-3}\@.} {\itshape  
Given theorems~\ref{th:finite-lattice-finite-part}
and~\ref{th:all-type-3}, is it true that every finite lattice has a
type-3 representation over a finite base set?

} \par \hbox{}

\noindent \textbf{Question \ref{q:finite-type-2}\@.} {\itshape  
Given theorems~\ref{th:finite-lattice-finite-part}
and~\ref{th:modular-type-2}, is it true that every finite modular lattice has a
type-2 representation over a finite base set?

} \par \hbox{}

\noindent \textbf{Question \ref{q:Dn-strictly-stronger}\@.} {\itshape   \cite{Haim1987}*{sec.~4}
It is expected that $D_{n+1}$ is strictly stronger than $D_n$ for all
$n \geq 1$.  Is this true?  Is this true in the class of finite lattices?

} \par \hbox{}

\noindent \textbf{Question \ref{q:finite-type-1}\@.} {\itshape   \cite{Wor2024a}
Given theorem~\ref{th:finite-lattice-finite-part},
is it true that every finite linear lattice has a
type-1 representation over a finite base set?

} \par \hbox{}

\noindent \textbf{Question \ref{q:finitary-block}\@.} {\itshape  
Does every finitary linear lattice have a finite-block representation?

} \par \hbox{}

\noindent \textbf{Question \ref{q:linear-variety}\@.} {\itshape   \cite{Jons1953b}*{sec.~5}\linebreak[0]\cite{Jons1959a}*{sec.~4 Prob.~1}\linebreak[0]\cite{Haim1984a}*{Ch.~II}\linebreak[0]\cite{Haim1985a}*{sec.~1.0}\linebreak[0]\cite{Nat1994}\linebreak[0]\cite{Nat2017}*{ch.~4}
Is the class of linear lattices closed under homomorphic images (which
by theorems~\ref{th:linear-subd} and~\ref{th:birk-variety} is
equivalent to being a variety and to being definable by
identities)?

} \par \hbox{}

\noindent \textbf{Question \ref{q:linear-self-dual}\@.} {\itshape   \cite{Haim1985a}*{sec.~1.0}
Is the class of linear lattices self-dual?

} \par \hbox{}

\noindent \textbf{Question \ref{q:linear-strictly-stronger}\@.} {\itshape  
Is the linear condition strictly stronger than the unbounded-order
Arguesian condition?
That is, are there unbounded-order Arguesian lattices that are not
linear?
If so, does there exist such a finite lattice?
(If not, then questions~\ref{q:linear-variety}
and~\ref{q:linear-self-dual} are true.)

} \par \hbox{}

\noindent \textbf{Question \ref{q:linear-implication}\@.} {\itshape  
Which of the sentences defined by theorem~\ref{th:linear-sent} are
implied by one or more of the laws $D_n$?
Conversely, which of the laws $D_n$ are implied by one or more of the
sentences of theorem~\ref{th:linear-sent}?

} \par \hbox{}

\noindent \textbf{Question \ref{q:linear-direct-limit}\@.} {\itshape   \cite{Haim1984a}*{Ch.~II}\cite{Haim1985a}*{sec.~1.0}
It does not seem possible, however, straightforwardly to
construct a representation of a direct limit of linear lattices from
representations of the individual lattices.
Is there such a construction?

} \par \hbox{}

\noindent \textbf{Question \ref{q:normal-dual}\@.} {\itshape  
Is the class of sublattices of normal subgroups of a group self-dual?

} \par \hbox{}

\noindent \textbf{Question \ref{q:normal-sent}\@.} {\itshape   \cite{Jons1953b}*{sec.~5}
Can the class of sublattices of normal subgroups of a group be
specified by a set of sentences?  By a set of identities?  
By a finite set of sentences?  By a finite set of identities?

} \par \hbox{}

\noindent \textbf{Question \ref{q:abelian-dual}\@.} {\itshape  
Are the set of lattices that are embeddable into the subgroups of an
Abelian group self-dual?

} \par \hbox{}

\noindent \textbf{Question \ref{q:abelian-sent}\@.} {\itshape   \cite{Jons1953b}*{sec.~5}
Can the class of sublattices of subgroups of an Abelian group be
specified by a set of sentences?  By a set of identities?  
By a finite set of sentences?  By a finite set of identities?

} \par \hbox{}

\noindent \textbf{Question \ref{q:skew-abelian}\@.} {\itshape  
Are there lattices of subgroups of Abelian groups that cannot be
embedded into the lattice of subspaces of a skew vector space?
Of those lattices, are there any that are finite?

} \par \hbox{}

\noindent \textbf{Question \ref{q:skew-sent}\@.} {\itshape  
Can the class of sublattices of subspaces of skew vector spaces be
specified by a set of sentences?  By a set of identities?  
By a finite set of sentences?  By a finite set of identities?

} \par \hbox{}

\noindent \textbf{Question \ref{q:vector-finite}\@.} {\itshape  
Is there a lattice embeddable into the subspaces of a skew
vector space which cannot be embedded into the subspaces of a vector
space?
If so, is there such a lattice that is finite?

} \par \hbox{}

\noindent \textbf{Question \ref{q:vector-sent}\@.} {\itshape  
Can the class of sublattices of subspaces of vector spaces be
specified by a set of sentences?  By a set of identities?  
By a finite set of sentences?  By a finite set of identities?

} \par \hbox{}


\section{References}

\begin{biblist}[\normalsize]*{labels={alphabetic}}


\bib*{xref-Stan1999}{book}{
  title={Enumerative Combinatorics, Volume 2},
  author={Stanley, Richard P.},
  date={1999},
  publisher={Cambridge University Press},
  address={Cambridge},
  series={Cambridge Studies in Advanced Mathematics},
  volume={62},
}

\bib*{xref-Stant1990}{book}{
  title={Invariant Theory and Tablaux},
  editor={Stanton, Dennis},
  publisher={Springer-Verlag},
  series={IMA Volumes in Math. and Its Appls.},
  volume={19},
  address={Berlin and New York},
  date={1990},
}

\bib{Bell1961}{book}{
  label={Bell1961},
  title={The last problem},
  author={Bell, Eric Temple},
  date={1961},
  publisher={Simon \& Schuster},
  address={New York},
}

\bib{Birk1935}{article}{
  label={Birk1935},
  author={Birkhoff, Garrett},
  title={On the structure of abstract algebras},
  journal={Proc.\ Cambridge Philos.\ Soc.},
  volume={31},
  date={1935},
  pages={433--454},
  doi={10.1017/S0305004100013463},
  eprint={https://www.cambridge.org/core/journals/mathematical-proceedings-of-the-cambridge-philosophical-society/article/abs/on-the-structure-of-abstract-algebras/D142B3886A3B7A218D8DF8E6DDA2B5B1},
  note={available at \url {https://web.archive.org/web/20180330012312/https://pdfs.semanticscholar.org/a282/3f992ea5e2d2a1e989ce01844da71e4ec6a5.pdf} \url {https://scholar.google.com/scholar?cluster=4998884063153058981}},
}

\bib{Birk1967}{book}{
  label={Birk1967},
  author={Birkhoff, Garrett},
  title={Lattice theory},
  edition={3},
  date={1967},
  publisher={American Mathematical Society},
  address={Providence},
  series={American Mathematical Society Colloquium Publications},
  volume={25},
  note={Original edition 1940. available at \url {https://archive.org/details/latticetheory0000birk} \url {https://scholar.google.com/scholar?cluster=10180976689018188837}},
}

\bib{BritzMainPezz2001}{article}{
  label={BritzMainPezz2001},
  author={Britz, T.},
  author={Mainetti, M.},
  author={Pezzoli, L.},
  title={Some operations on the family of equivalence relations},
  book={ publisher = {Springer-Verlag}, editor = {Crapo, H.}, editor = {Senato, D.}, title = {Algebraic Combinatorics and Computer Science: A Tribute to Gian-Carlo Rota}, address = {Milan}, date = {2001}, editor = {Freese, Ralph S.}, editor = {Garcia, O.\ C.}, },
  date={2001},
  pages={445--459},
  doi={10.1007/978-88-470-2107-5\_18},
  eprint={http://www2.mat.dtu.dk/people/oldusers/T.Britz/papers/equrel.pdf},
  note={\url {https://link.springer.com/chapter/10.1007/978-88-470-2107-5\_18} \url {https://scholar.google.com/scholar?cluster=6682016611742498100}},
}

\bib{CrawDil1973}{book}{
  label={CrawDil1973},
  title={Algebraic Theory of Lattices},
  author={Crawley, J.\ P.},
  author={Dilworth, R.\ P.},
  date={1973},
  publisher={Prentice-Hall},
  address={Englewood Cliffs, N.\ J.},
  note={available at \url {https://archive.org/details/algebraictheoryo0000craw}},
}

\bib{Day1981}{article}{
  label={Day1981},
  author={Day, Alan},
  title={In search of a Pappian lattice identity},
  journal={Canad.\ Math.\ Bull.},
  volume={24},
  date={1981},
  pages={187--198},
  doi={10.4153/CMB-1981-030-0},
  eprint={https://www.cambridge.org/core/services/aop-cambridge-core/content/view/4FDC4136B6E81E491126C500A5666616/S0008439500063323a.pdf/in-search-of-a-pappian-lattice-identity.pdf},
  note={\url {https://www.cambridge.org/core/journals/canadian-mathematical-bulletin/article/in-search-of-a-pappian-lattice-identity/4FDC4136B6E81E491126C500A5666616} \url {https://scholar.google.com/scholar?cluster=13192605395332536270}},
}

\bib{Day1982}{article}{
  label={Day1982},
  title={Geometrical applications in modular lattices},
  author={Day, Alan},
  date={1982},
  conference={ title = {Universal algebra and lattice theory}, address = {Puebla, Mexico}, date = {1982}, },
  book={ publisher = {Springer-Verlag}, series = {Lecture Notes in Math.\ (LNM)}, volume = {1004}, address = {Berlin and New York}, date = {2006}, pages = {111--141}, editor = {Freese, Ralph S.}, editor = {Garcia, O.\ C.}, },
  doi={10.1007/BFb0063433},
  eprint={https://link.springer.com/chapter/10.1007/BFb0063433},
  note={\url {https://scholar.google.com/scholar?cluster=14570042586739298811}},
}

\bib{DayPick1984}{article}{
  label={DayPick1984},
  author={Day, Alan},
  author={Pickering, Douglas},
  title={A note on the Arguesian lattice identity},
  journal={Studia Scient.\ Math.\ Hungarica},
  volume={19},
  date={1984},
  conference={ title={Conference on Universal Algebra}, address={Visegr\'ad, Hungary}, date={May 1982} },
  pages={303-305},
  eprint={https://core.ac.uk/download/pdf/35157988.pdf\#page=309},
  note={\url {https://scholar.google.com/scholar?cluster=15359437393910443487}},
}

\bib{Ded1900}{article}{
  label={Ded1900},
  author={Dedekind, Richard},
  title={Ueber die drei Moduln erzeugte Dualgruppe [The lattice generated by three subgroups]},
  journal={Math. Annalen},
  volume={53},
  date={1900},
  pages={371--403},
  doi={10.1007/BF01448979},
  eprint={https://sci-hub.ru/https://doi.org/10.1007/BF01448979},
  note={\url {https://link.springer.com/article/10.1007/BF01448979} \url {https://scholar.google.com/scholar?cluster=1843800222145966254}},
}

\bib{EnMathArg}{webpage}{
  label={EnMathArg},
  title={Encyclopedia of Mathematics: Arguesian lattice},
  accessdate={2023-12-03},
  url={https://encyclopediaofmath.org/wiki/Arguesian_lattice},
}

\bib{EnMathDes}{webpage}{
  label={EnMathDes},
  title={Encyclopedia of Mathematics: Desargues assumption},
  accessdate={2023-12-03},
  url={https://encyclopediaofmath.org/wiki/Desargues_assumption},
}

\bib{EnMathMod}{webpage}{
  label={EnMathMod},
  title={Encyclopedia of Mathematics: Modular lattice},
  accessdate={2023-12-13},
  url={https://encyclopediaofmath.org/wiki/Modular_lattice},
}

\bib{Frees1994}{article}{
  label={Frees1994},
  author={Freese, Ralph S.},
  title={Finitely based modular congruence varieties are distributive},
  journal={Algebra Universalis},
  volume={32},
  date={1994},
  pages={104-114},
  doi={10.1007/BF01190818},
  eprint={https://math.hawaii.edu/~ralph/Preprints/modcv2.pdf},
  note={\url {https://link.springer.com/article/10.1007/BF01190818} \url {https://scholar.google.com/scholar?cluster=5796163421085917375}},
}

\bib{Gratz1979}{book}{
  label={Gr\"atz1979},
  author={Gr\"atzer, George},
  title={Universal Algebra},
  date={1979},
  publisher={Springer},
  address={New York},
  doi={10.1007/978-0-387-77487-9},
  eprint={https://link.springer.com/book/10.1007/978-0-387-77487-9},
  note={\url {https://scholar.google.com/scholar?cluster=9635876586085855118}},
}

\bib{Gratz1996}{book}{
  label={Gr\"atz1996},
  title={General lattice theory},
  author={Gr\"atzer, George},
  date={1996},
  edition={2},
  publisher={Birkh\"auser},
  address={Basel},
  doi={10.1007/978-3-7643-6996-5},
  eprint={https://link.springer.com/book/9783764369965},
  note={\url {https://scholar.google.com/scholar?cluster=14018131805847906905}},
}

\bib{Haim1984a}{thesis}{
  label={Haim1984a},
  author={Haiman, Mark D.},
  title={The Theory of Linear Lattices},
  school={Massachusetts Inst.\ of Tech.},
  year={1984},
  type={Ph.D. Thesis},
  note={\url {https://scholar.google.com/scholar?cluster=12728508576024553972}},
}

\bib{Haim1984b}{article}{
  label={Haim1984b},
  author={Haiman, Mark D.},
  title={Linear lattice proof theory: an overview},
  date={1984},
  conference={ title = {Universal algebra and lattice theory}, address = {Charleston, South Carolina}, date = {1984}, },
  book={ publisher = {Springer-Verlag}, series = {Lecture Notes in Math.\ (LNM)}, volume = {1149}, address = {Berlin and New York}, date = {1985}, pages = {129--141}, editor = {Comer, Stephen D.}, eprint = {https://link.springer.com/book/10.1007/BFb0098450}, },
  doi={https://doi.org/10.1007/BFb0098460},
  eprint={https://link.springer.com/chapter/10.1007/BFb0098460},
  note={\url {https://scholar.google.com/scholar?cluster=2326541148673430734}},
}

\bib{Haim1985a}{article}{
  label={Haim1985a},
  author={Haiman, Mark S.},
  title={Proof theory for linear lattices},
  journal={Advances in Math.},
  volume={58},
  date={1985},
  pages={209--242},
  doi={10.1016/0001-8708(85)90118-5},
  eprint={https://www.sciencedirect.com/science/article/pii/0001870885901185},
  note={\url {https://scholar.google.com/scholar?cluster=2116643225209972735}},
}

\bib{Haim1985b}{article}{
  label={Haim1985b},
  author={Haiman, Mark S.},
  title={Two notes on the Arguesian identity},
  journal={Algebra Universalis},
  volume={21},
  date={1985},
  pages={167--171},
  doi={10.1007/BF01188053},
  eprint={https://link.springer.com/article/10.1007/BF01188053},
  note={\url {https://scholar.google.com/scholar?cluster=3496212099161506733}},
}

\bib{Haim1987}{article}{
  label={Haim1987},
  author={Haiman, Mark D.},
  title={Arguesian lattices which are not linear},
  journal={Bull. of the AMS},
  volume={16},
  date={1987},
  pages={121--123},
  doi={10.1090/S0273-0979-1987-15483-8},
  eprint={https://community.ams.org/journals/bull/1987-16-01/S0273-0979-1987-15483-8/S0273-0979-1987-15483-8.pdf},
  note={\url {https://scholar.google.com/scholar?cluster=1301710048995236821}},
}

\bib{Haim1991}{article}{
  label={Haim1991},
  author={Haiman, Mark D.},
  title={Arguesian lattices which are not type-1},
  journal={Algebra Universalis},
  volume={28},
  date={1991},
  pages={128--137},
  doi={10.1007/BF01190416},
  eprint={https://link.springer.com/article/10.1007/BF01190416},
  note={\url {https://scholar.google.com/scholar?cluster=13577691971007877653}},
}

\bib{Hawr1996}{article}{
  label={Hawr1996},
  author={Hawrylycz, Michael},
  title={Arguesian identities in invariant theory},
  journal={Adv. in Math.},
  volume={122},
  date={1996},
  pages={1--48},
  doi={10.1006/aima.1996.0056},
  eprint={https://www.sciencedirect.com/science/article/pii/S0001870896900560},
  note={\url {https://core.ac.uk/reader/82133140} \url {https://scholar.google.com/scholar?cluster=18029858374324039613}},
}

\bib{Herr2013}{article}{
  label={Herr2013},
  author={Herrmann, Christian},
  title={A review of some of Bjarni J\'onsson's results on representation of arguesian lattices},
  journal={Algebra Universalis},
  volume={70},
  date={2013},
  pages={163--174},
  doi={10.1007/s00012-013-0240-5},
  eprint={https://www2.mathematik.tu-darmstadt.de/~herrmann/recherche/modularlattices/glueing/argAU.pdf},
  note={\url {https://scholar.google.com/scholar?cluster=5889273947199313238}},
}

\bib{Jons1953b}{article}{
  label={J\'ons1953b},
  author={J{\'o}nsson, Bjarni},
  title={On the representation of lattices},
  journal={Math.\ Scandinavica},
  volume={1},
  date={1953},
  pages={193--206},
  eprint={https://www.mscand.dk/article/view/10377},
  note={\url {https://scholar.google.com/scholar?cluster=5679706934102218841}},
}

\bib{Jons1954a}{article}{
  label={J\'ons1954a},
  author={J{\'o}nsson, Bjarni},
  title={Modular lattices and Desargues' theorem},
  journal={Math.\ Scandinavica},
  volume={2},
  date={1954},
  pages={205-314},
  eprint={https://www.jstor.org/stable/24489042},
  note={\url {https://scholar.google.com/scholar?cluster=14025529922627479984}},
}

\bib{Jons1954c}{article}{
  label={J\'ons1954c},
  author={J{\'o}nsson, Bjarni},
  title={Representations of lattices. II. Preliminary report.},
  journal={Bull. of the AMS},
  volume={60},
  date={1954},
  pages={24},
  conference={ title={AMS meeting}, address={New York}, date={October 1953} },
  eprint={https://www.ams.org/journals/bull/1954-60-01/S0002-9904-1954-09753-7/S0002-9904-1954-09753-7.pdf},
}

\bib{Jons1959a}{article}{
  label={J\'ons1959a},
  author={J{\'o}nsson, Bjarni},
  title={Representation of modular lattices and of relation algebras},
  journal={Trans.\ of the AMS},
  volume={92},
  date={1959},
  pages={449--464},
  doi={10.1090/S0002-9947-1959-0108459-5},
  eprint={https://www.ams.org/journals/tran/1959-092-03/S0002-9947-1959-0108459-5/},
  note={\url {https://scholar.google.com/scholar?cluster=15760874009066419029}},
}

\bib{Jons1972}{article}{
  label={J\'ons1972},
  author={J{\'o}nsson, Bjarni},
  title={The class of Arguesian lattices is self-dual},
  journal={Algebra Universalis},
  volume={2},
  date={1972},
  pages={396},
  doi={10.1007/BF02945054},
  eprint={https://link.springer.com/article/10.1007/BF02945054},
  note={\url {https://scholar.google.com/scholar?cluster=13761165232906172754}},
}

\bib{KissPal1998}{article}{
  label={KissP\'al1998},
  author={Kiss, E.\ W.},
  author={P\'alfy, P.\ P.},
  title={A lattice of normal subgroups that is not embeddable into the subgroup lattice of an Abelian group},
  journal={Math.\ Scandinavica},
  volume={83},
  date={1998},
  pages={169--176},
  eprint={https://www.jstor.org/stable/24493130?seq=1},
  note={\url {https://scholar.google.com/scholar?cluster=9489003440967396754}},
}

\bib{Lamp1994}{article}{
  label={Lamp1994},
  author={Lampe, William A.},
  title={A perspective on algebraic representations of lattices},
  journal={Algebra Universalis},
  volume={31},
  date={1994},
  pages={337--364},
  doi={10.1007/BF01221791},
  eprint={https://link.springer.com/article/10.1007/BF01221791},
  note={https://scholar.google.com/scholar?cluster=10030817407850163469},
}

\bib{Nat1994}{article}{
  label={Nat1994},
  author={Nation, J.\ B.},
  title={J\'onsson's contributions to lattice theory},
  journal={Algebra Universalis},
  volume={31},
  date={1994},
  pages={430--445},
  doi={10.1007/BF01221797},
  eprint={https://link.springer.com/article/10.1007/BF01221797},
  note={\url {https://scholar.google.com/scholar?cluster=8251348821874564111}},
}

\bib{Nat2017}{book}{
  label={Nat2017},
  title={Notes on Lattice Theory},
  author={Nation, J.\ B.},
  date={2017},
  eprint={http://math.hawaii.edu/~jb/lattice2017.pdf},
  note={\url {https://scholar.google.com/scholar?cluster=2514237753731863273} Links to individual chapters are in \url {https://math.hawaii.edu/~jb/books.html}},
}

\bib{Nat2018}{article}{
  label={Nat2018},
  author={Nation, J.\ B.},
  title={Tribute to Bjarni J{\'o}nsson},
  journal={Algebra Universalis},
  volume={79},
  date={2018},
  pages={art.\ 57},
  doi={10.1007/s00012-018-0542-8},
  eprint={https://math.hawaii.edu/~jb/bjarni_jonsson_3.pdf},
  note={\url {https://scholar.google.com/scholar?cluster=4641080885284972445} \url {https://link.springer.com/article/10.1007/s00012-018-0542-8}},
}

\bib{NatPick1987}{article}{
  label={NatPick1987},
  author={Nation, J.\ B.},
  author={Pickering, Douglas A.},
  title={Arguesian lattices whose skeleton is a chain},
  journal={Algebra Universalis},
  volume={24},
  date={1987},
  pages={91-100},
  doi={10.1007/BF01188386},
  eprint={https://link.springer.com/article/10.1007/BF01188386},
  note={\url {https://scholar.google.com/scholar?cluster=8996748697701020082}},
}

\bib{Ore1942}{article}{
  label={Ore1942},
  author={Ore, {\O }ystein},
  title={Theory of equivalence relations},
  journal={Duke Math.\ J.},
  volume={9},
  date={1942},
  pages={573--627},
  doi={10.1215/S0012-7094-42-00942-6},
  eprint={https://projecteuclid.org/journals/duke-mathematical-journal/volume-9/issue-3/Theory-of-equivalence-relations/10.1215/S0012-7094-42-00942-6.short},
  note={\url {https://scholar.google.com/scholar?cluster=8845904821420179904}},
}

\bib{Pal2001}{article}{
  label={P\'al2001},
  author={P\'alfy, P\'eter P.},
  title={Groups and lattices},
  date={2001},
  conference={ title = {Groups St Andrews 2001 in Oxford: Volume 2}, address = {Oxford}, date = {2001}, },
  book={ publisher = {Cambridge Univ.}, series = {London Math.\ Soc.\ Lecture Note Series}, volume = {305}, address={Cambridge}, date={2003}, },
  eprint={https://web.archive.org/web/20121022011544/https://math.hawaii.edu/~williamdemeo/latticetheory/Palfy-GroupsAndLattices-GStA-2001.pdf},
  note={\url {https://scholar.google.com/scholar?cluster=3870854386669262006}},
}

\bib{PalSzab1995}{article}{
  label={P\'alSzab1995},
  author={P\'alfy, P\'eter P.},
  author={Szab\'o, Cs.},
  title={An identity for subgroup lattices of Abelian groups},
  journal={Algebra Universalis},
  volume={33},
  date={1995},
  pages={191--195},
  doi={10.1007/BF01190930},
  eprint={https://link.springer.com/article/10.1007/BF01190930},
  note={\url {https://scholar.google.com/scholar?cluster=3373915366505029858}},
}

\bib{PudTum1980}{article}{
  label={PudT\r {u}m1980},
  author={Pudl\'ak, Pavel},
  author={T\r {u}ma, Ji\v {r}\'i},
  title={Every finite lattice can be embedded in a finite partition lattice},
  journal={Algebra Universalis},
  volume={10},
  date={1980},
  pages={74--95},
  doi={10.1007/BF02482893},
  eprint={https://link.springer.com/article/10.1007/BF02482893},
  note={\url {https://scholar.google.com/scholar?cluster=3014669489435746571}},
}

\bib{Schut1945}{article}{
  label={Sch\"ut1945},
  author={Sch\"utzenberger, M.-P.},
  title={Sur certains axioms de la theorie des structures},
  journal={C. R. Acad. Sci.},
  volume={221},
  date={1945},
  pages={218-222},
  eprint={https://gallica.bnf.fr/ark:/12148/bpt6k3173p.item},
  note={\url {https://scholar.google.com/scholar?cluster=10841751010223657184}},
}

\bib{Stan2012}{book}{
  label={Stan2012},
  title={Enumerative Combinatorics, Volume 1},
  edition={2},
  author={Stanley, Richard P.},
  date={1997, 2012},
  publisher={Cambridge University Press},
  address={Cambridge},
  series={Cambridge Studies in Advanced Mathematics},
  volume={49},
  note={original edition 1997.},
}

\bib{Whit1946}{article}{
  label={Whit1946},
  author={Whitman, Philip M.},
  title={Lattices, equivalence relations, and subgroups},
  journal={Bull. of the AMS},
  volume={52},
  date={1946},
  pages={507--522},
  eprint={https://community.ams.org/journals/bull/1946-52-06/S0002-9904-1946-08602-4/S0002-9904-1946-08602-4.pdf},
  note={\url {https://www.ams.org/journals/bull/1946-52-06/S0002-9904-1946-08602-4/} \url {https://scholar.google.com/scholar?cluster=4422758468344119213}},
}

\bib{WikiEl}{webpage}{
  label={WikiEl},
  accessdate={2024-02-12},
  url={https://en.wikipedia.org/wiki/Elementary_class},
}

\bib{WikiVar}{webpage}{
  label={WikiVar},
  accessdate={2024-01-21},
  url={https://en.wikipedia.org/wiki/Variety_(universal_algebra)},
}

\bib{Wor2024a}{webpage}{
  label={Wor2024a},
  title={Can we extend "every finite lattice is a sublattice of partitions of a finite set" to linear and/or finitary lattices?},
  author={Worley, Dale},
  accessdate={2024-02-13},
  url={https://mathoverflow.net/questions/461875/can-we-extend-every-finite-lattice-is-a-sublattice-of-partitions-of-a-finite-se},
}

\bib{Yan1998}{article}{
  label={Yan1998},
  author={Yan, Catherine Huafei},
  title={Distributive laws for commuting equivalence relations},
  journal={Discrete Math.},
  volume={181},
  date={1998},
  pages={295--298},
  doi={10.1016/S0012-365X(97)00061-7},
  eprint={https://core.ac.uk/download/pdf/82571977.pdf},
  note={\url {https://scholar.google.com/scholar?cluster=17278291977600796519}},
}

\end{biblist}

\vspace{3em}

\end{document}